\let\pdfoutput=\undefined\fi
\chardef\@x10\chardef\@xv60
\def\tcitime{
\def\@time{%
  \@minute\time\@hour\@minute\divide\@hour\@xv
  \ifnum\@hour<\@x 0\fi\the\@hour:%
  \multiply\@hour\@xv\advance\@minute-\@hour
  \ifnum\@minute<\@x 0\fi\the\@minute
  }}%
\def\x@hyperref#1#2#3{%
   % Turn off various catcodes before reading parameter 4
   \catcode`\~ = 12
   \catcode`\$ = 12
   \catcode`\_ = 12
   \catcode`\# = 12
   \catcode`\& = 12
   \catcode`\% = 12
   \y@hyperref{#1}{#2}{#3}%
}
\def\y@hyperref#1#2#3#4{%
   #2\ref{#4}#3
   \catcode`\~ = 13
   \catcode`\$ = 3
   \catcode`\_ = 8
   \catcode`\# = 6
   \catcode`\& = 4
   \catcode`\% = 14
}
\def\QCTOpt[#1]#2{%
  \def\QCTOptB{#1}
  \def\QCTOptA{#2}
}
\def\QCTNOpt#1{%
  \def\QCTOptA{#1}
  \let\QCTOptB\empty
}
\def\Qct{%
  \@ifnextchar[{%
    \QCTOpt}{\QCTNOpt}
}
\def\QCBOpt[#1]#2{%
  \def\QCBOptB{#1}%
  \def\QCBOptA{#2}%
}
\def\QCBNOpt#1{%
  \def\QCBOptA{#1}%
  \let\QCBOptB\empty
}
\def\Qcb{%
  \@ifnextchar[{%
    \QCBOpt}{\QCBNOpt}%
}
\def\PrepCapArgs{%
  \ifx\QCBOptA\empty
    \ifx\QCTOptA\empty
      {}%
    \else
      \ifx\QCTOptB\empty
        {\QCTOptA}%
      \else
        [\QCTOptB]{\QCTOptA}%
      \fi
    \fi
  \else
    \ifx\QCBOptA\empty
      {}%
    \else
      \ifx\QCBOptB\empty
        {\QCBOptA}%
      \else
        [\QCBOptB]{\QCBOptA}%
      \fi
    \fi
  \fi
}
\def\GRAPHICSPS#1{%
 \ifcase\GRAPHICSTYPE%\GRAPHICSTYPE=0
   \special{ps: #1}%
 \or%\GRAPHICSTYPE=1
   \special{language "PS", include "#1"}%
%%%\or%\GRAPHICSTYPE=2
%%%  #1%
 \fi
}%
\def\graffile#1#2#3#4{%
%%% \ifnum\GRAPHICSTYPE=\tw@
%%%  %Following if using psfig
%%%  \@ifundefined{psfig}{\input psfig.tex}{}%
%%%  \psfig{file=#1, height=#3, width=#2}%
%%% \else
  %Following for all others
  % JCS - added BOXTHEFRAME, see below
    \bgroup
	   \@inlabelfalse
       \leavevmode
       \@ifundefined{bbl@deactivate}{\def~{\string~}}{\activesoff}%
        \raise -#4 \BOXTHEFRAME{%
           \hbox to #2{\raise #3\hbox to #2{\null #1\hfil}}}%
    \egroup
}%
\def\draftbox#1#2#3#4{%
 \leavevmode\raise -#4 \hbox{%
  \frame{\rlap{\protect\tiny #1}\hbox to #2%
   {\vrule height#3 width\z@ depth\z@\hfil}%
  }%
 }%
}%
\let\nographics=\@msidraft
\newif\ifwasdraft
\def\GRAPHIC#1#2#3#4#5{%
   \ifnum\@msidraft=\@ne\draftbox{#2}{#3}{#4}{#5}%
   \else\graffile{#1}{#3}{#4}{#5}%
   \fi
}
\def\addtoLaTeXparams#1{%
    \edef\LaTeXparams{\LaTeXparams #1}}%
\newif\ifBoxFrame \BoxFramefalse
\newif\ifOverFrame \OverFramefalse
\newif\ifUnderFrame \UnderFramefalse
\def\BOXTHEFRAME#1{%
   \hbox{%
      \ifBoxFrame
         \frame{#1}%
      \else
         {#1}%
      \fi
   }%
}
\def\doFRAMEparams#1{\BoxFramefalse\OverFramefalse\UnderFramefalse\readFRAMEparams#1\end}%
\def\readFRAMEparams#1{%
 \ifx#1\end%
  \let\next=\relax
  \else
  \ifx#1i\dispkind=\z@\fi
  \ifx#1d\dispkind=\@ne\fi
  \ifx#1f\dispkind=\tw@\fi
  \ifx#1t\addtoLaTeXparams{t}\fi
  \ifx#1b\addtoLaTeXparams{b}\fi
  \ifx#1p\addtoLaTeXparams{p}\fi
  \ifx#1h\addtoLaTeXparams{h}\fi
  \ifx#1X\BoxFrametrue\fi
  \ifx#1O\OverFrametrue\fi
  \ifx#1U\UnderFrametrue\fi
  \ifx#1w
    \ifnum\@msidraft=1\wasdrafttrue\else\wasdraftfalse\fi
    \@msidraft=\@ne
  \fi
  \let\next=\readFRAMEparams
  \fi
 \next
 }%
\def\IFRAME#1#2#3#4#5#6{%
      \bgroup
      \let\QCTOptA\empty
      \let\QCTOptB\empty
      \let\QCBOptA\empty
      \let\QCBOptB\empty
      #6%
      \parindent=0pt
      \leftskip=0pt
      \rightskip=0pt
      \setbox0=\hbox{\QCBOptA}%
      \@tempdima=#1\relax
      \ifOverFrame
          % Do this later
          \typeout{This is not implemented yet}%
          \show\HELP
      \else
         \ifdim\wd0>\@tempdima
            \advance\@tempdima by \@tempdima
            \ifdim\wd0 >\@tempdima
               \setbox1 =\vbox{%
                  \unskip\hbox to \@tempdima{\hfill\GRAPHIC{#5}{#4}{#1}{#2}{#3}\hfill}%
                  \unskip\hbox to \@tempdima{\parbox[b]{\@tempdima}{\QCBOptA}}%
               }%
               \wd1=\@tempdima
            \else
               \textwidth=\wd0
               \setbox1 =\vbox{%
                 \noindent\hbox to \wd0{\hfill\GRAPHIC{#5}{#4}{#1}{#2}{#3}\hfill}\\%
                 \noindent\hbox{\QCBOptA}%
               }%
               \wd1=\wd0
            \fi
         \else
            \ifdim\wd0>0pt
              \hsize=\@tempdima
              \setbox1=\vbox{%
                \unskip\GRAPHIC{#5}{#4}{#1}{#2}{0pt}%
                \break
                \unskip\hbox to \@tempdima{\hfill \QCBOptA\hfill}%
              }%
              \wd1=\@tempdima
           \else
              \hsize=\@tempdima
              \setbox1=\vbox{%
                \unskip\GRAPHIC{#5}{#4}{#1}{#2}{0pt}%
              }%
              \wd1=\@tempdima
           \fi
         \fi
         \@tempdimb=\ht1
         %\advance\@tempdimb by \dp1
         \advance\@tempdimb by -#2
         \advance\@tempdimb by #3
         \leavevmode
         \raise -\@tempdimb \hbox{\box1}%
      \fi
      \egroup%
}%
\def\DFRAME#1#2#3#4#5{%
  \vspace\topsep
  \hfil\break
  \bgroup
     \leftskip\@flushglue
	 \rightskip\@flushglue
	 \parindent\z@
	 \parfillskip\z@skip
     \let\QCTOptA\empty
     \let\QCTOptB\empty
     \let\QCBOptA\empty
     \let\QCBOptB\empty
	 \vbox\bgroup
        \ifOverFrame 
           #5\QCTOptA\par
        \fi
        \GRAPHIC{#4}{#3}{#1}{#2}{\z@}%
        \ifUnderFrame 
           \break#5\QCBOptA
        \fi
	 \egroup
  \egroup
  \vspace\topsep
  \break
}%
\def\FFRAME#1#2#3#4#5#6#7{%
 %If float.sty loaded and float option is 'h', change to 'H'  (gp) 1998/09/05
  \@ifundefined{floatstyle}
    {%floatstyle undefined (and float.sty not present), no change
     \begin{figure}[#1]%
    }
    {%floatstyle DEFINED
	 \ifx#1h%Only the h parameter, change to H
      \begin{figure}[H]%
	 \else
      \begin{figure}[#1]%
	 \fi
	}
  \let\QCTOptA\empty
  \let\QCTOptB\empty
  \let\QCBOptA\empty
  \let\QCBOptB\empty
  \ifOverFrame
    #4
    \ifx\QCTOptA\empty
    \else
      \ifx\QCTOptB\empty
        \caption{\QCTOptA}%
      \else
        \caption[\QCTOptB]{\QCTOptA}%
      \fi
    \fi
    \ifUnderFrame\else
      \label{#5}%
    \fi
  \else
    \UnderFrametrue%
  \fi
  \begin{center}\GRAPHIC{#7}{#6}{#2}{#3}{\z@}\end{center}%
  \ifUnderFrame
    #4
    \ifx\QCBOptA\empty
      \caption{}%
    \else
      \ifx\QCBOptB\empty
        \caption{\QCBOptA}%
      \else
        \caption[\QCBOptB]{\QCBOptA}%
      \fi
    \fi
    \label{#5}%
  \fi
  \end{figure}%
 }%
\def\makeactives{
  \catcode`\"=\active
  \catcode`\;=\active
  \catcode`\:=\active
  \catcode`\'=\active
  \catcode`\~=\active
}
   \gdef\activesoff{%
      \def"{\string"}%
      \def;{\string;}%
      \def:{\string:}%
      \def'{\string'}%
      \def~{\string~}%
      %\bbl@deactivate{"}%
      %\bbl@deactivate{;}%
      %\bbl@deactivate{:}%
      %\bbl@deactivate{'}%
    }
\def\FRAME#1#2#3#4#5#6#7#8{%
 \bgroup
 \ifnum\@msidraft=\@ne
   \wasdrafttrue
 \else
   \wasdraftfalse%
 \fi
 \def\LaTeXparams{}%
 \dispkind=\z@
 \def\LaTeXparams{}%
 \doFRAMEparams{#1}%
 \ifnum\dispkind=\z@\IFRAME{#2}{#3}{#4}{#7}{#8}{#5}\else
  \ifnum\dispkind=\@ne\DFRAME{#2}{#3}{#7}{#8}{#5}\else
   \ifnum\dispkind=\tw@
    \edef\@tempa{\noexpand\FFRAME{\LaTeXparams}}%
    \@tempa{#2}{#3}{#5}{#6}{#7}{#8}%
    \fi
   \fi
  \fi
  \ifwasdraft\@msidraft=1\else\@msidraft=0\fi{}%
  \egroup
 }%
\def\TEXUX#1{"texux"}
\def\func#1{\mathop{\rm #1}\nolimits}%
\long\def\QQQ#1#2{%
     \long\expandafter\def\csname#1\endcsname{#2}}%
\long\def\QQA#1#2{}%
\def\QTR#1#2{{\csname#1\endcsname {#2}}}%
\def\EXPAND#1[#2]#3{}%
\def\NOEXPAND#1[#2]#3{}%
\def\LaTeXparent#1{}%
\def\ChildStyles#1{}%
\def\ChildDefaults#1{}%
\def\QTagDef#1#2#3{}%
  \providecommand{\UNICODE}[2][]{\protect\rule{.1in}{.1in}}
  \providecommand{\U}[1]{\protect\rule{.1in}{.1in}}
\def\QQfnmark#1{\footnotemark}
 \def\abstract{%
  \if@twocolumn
   \section*{Abstract (Not appropriate in this style!)}%
   \else \small 
   \begin{center}{\bf Abstract\vspace{-.5em}\vspace{\z@}}\end{center}%
   \quotation 
   \fi
  }%
   \def\registered{\relax\ifmmode{}\r@gistered
                    \else$\m@th\r@gistered$\fi}%
 \def\r@gistered{^{\ooalign
  {\hfil\raise.07ex\hbox{$\scriptstyle\rm\text{R}$}\hfil\crcr
  \mathhexbox20D}}}}{}%
\newdimen\theight
\def\newfmtname{LaTeX2e}
  \DeclareOldFontCommand{\rm}{\normalfont\rmfamily}{\mathrm}
  \DeclareOldFontCommand{\sf}{\normalfont\sffamily}{\mathsf}
  \DeclareOldFontCommand{\tt}{\normalfont\ttfamily}{\mathtt}
  \DeclareOldFontCommand{\bf}{\normalfont\bfseries}{\mathbf}
  \DeclareOldFontCommand{\it}{\normalfont\itshape}{\mathit}
  \DeclareOldFontCommand{\sl}{\normalfont\slshape}{\@nomath\sl}
  \DeclareOldFontCommand{\sc}{\normalfont\scshape}{\@nomath\sc}
\def\alpha{{\Greekmath 010B}}%
\def\beta{{\Greekmath 010C}}%
\def\gamma{{\Greekmath 010D}}%
\def\delta{{\Greekmath 010E}}%
\def\epsilon{{\Greekmath 010F}}%
\def\zeta{{\Greekmath 0110}}%
\def\eta{{\Greekmath 0111}}%
\def\theta{{\Greekmath 0112}}%
\def\iota{{\Greekmath 0113}}%
\def\kappa{{\Greekmath 0114}}%
\def\lambda{{\Greekmath 0115}}%
\def\mu{{\Greekmath 0116}}%
\def\nu{{\Greekmath 0117}}%
\def\xi{{\Greekmath 0118}}%
\def\pi{{\Greekmath 0119}}%
\def\rho{{\Greekmath 011A}}%
\def\sigma{{\Greekmath 011B}}%
\def\tau{{\Greekmath 011C}}%
\def\upsilon{{\Greekmath 011D}}%
\def\phi{{\Greekmath 011E}}%
\def\chi{{\Greekmath 011F}}%
\def\psi{{\Greekmath 0120}}%
\def\omega{{\Greekmath 0121}}%
\def\varepsilon{{\Greekmath 0122}}%
\def\vartheta{{\Greekmath 0123}}%
\def\varpi{{\Greekmath 0124}}%
\def\varrho{{\Greekmath 0125}}%
\def\varsigma{{\Greekmath 0126}}%
\def\varphi{{\Greekmath 0127}}%
\def\nabla{{\Greekmath 0272}}
\def\FindBoldGroup{%
   {\setbox0=\hbox{$\mathbf{x\global\edef\theboldgroup{\the\mathgroup}}$}}%
}
\def\Greekmath#1#2#3#4{%
    \if@compatibility
        \ifnum\mathgroup=\symbold
           \mathchoice{\mbox{\boldmath$\displaystyle\mathchar"#1#2#3#4$}}%
                      {\mbox{\boldmath$\textstyle\mathchar"#1#2#3#4$}}%
                      {\mbox{\boldmath$\scriptstyle\mathchar"#1#2#3#4$}}%
                      {\mbox{\boldmath$\scriptscriptstyle\mathchar"#1#2#3#4$}}%
        \else
           \mathchar"#1#2#3#4% 
        \fi 
    \else 
        \FindBoldGroup
        \ifnum\mathgroup=\theboldgroup % For 2e
           \mathchoice{\mbox{\boldmath$\displaystyle\mathchar"#1#2#3#4$}}%
                      {\mbox{\boldmath$\textstyle\mathchar"#1#2#3#4$}}%
                      {\mbox{\boldmath$\scriptstyle\mathchar"#1#2#3#4$}}%
                      {\mbox{\boldmath$\scriptscriptstyle\mathchar"#1#2#3#4$}}%
        \else
           \mathchar"#1#2#3#4% 
        \fi     	    
	  \fi}
\newif\ifGreekBold  \GreekBoldfalse
\let\SAVEPBF=\pbf
\def\pbf{\GreekBoldtrue\SAVEPBF}%
  \newcounter{equationnumber}  
  \def\mathletters{%
     \addtocounter{equation}{1}
     \edef\@currentlabel{\theequation}%
     \setcounter{equationnumber}{\c@equation}
     \setcounter{equation}{0}%
     \edef\theequation{\@currentlabel\noexpand\alph{equation}}%
  }
    \def\BibTeX{{\rm B\kern-.05em{\sc i\kern-.025em b}\kern-.08em
                 T\kern-.1667em\lower.7ex\hbox{E}\kern-.125emX}}}{}%
\def\AmS{{\protect\usefont{OMS}{cmsy}{m}{n}%
                A\kern-.1667em\lower.5ex\hbox{M}\kern-.125emS}}}{}%
\def\@@eqncr{\let\@tempa\relax
    \ifcase\@eqcnt \def\@tempa{& & &}\or \def\@tempa{& &}%
      \else \def\@tempa{&}\fi
     \@tempa
     \if@eqnsw
        \iftag@
           \@taggnum
        \else
           \@eqnnum\stepcounter{equation}%
        \fi
     \fi
     \global\tag@false
     \global\@eqnswtrue
     \global\@eqcnt\z@\cr}
\def\TCItag{\@ifnextchar*{\@TCItagstar}{\@TCItag}}
\def\@TCItag#1{%
    \global\tag@true
    \global\def\@taggnum{(#1)}%
    \global\def\@currentlabel{#1}}
\def\@TCItagstar*#1{%
    \global\tag@true
    \global\def\@taggnum{#1}%
    \global\def\@currentlabel{#1}}
\def\tint{\msi@int\textstyle\int}%
\def\tiint{\msi@int\textstyle\iint}%
\def\tiiint{\msi@int\textstyle\iiint}%
\def\tiiiint{\msi@int\textstyle\iiiint}%
\def\tidotsint{\msi@int\textstyle\idotsint}%
\def\toint{\msi@int\textstyle\oint}%
\newtoks\temptoksa
\newtoks\temptoksb
\newtoks\temptoksc
\def\msi@int#1#2{%
 \def\@temp{{#1#2\the\temptoksc_{\the\temptoksa}^{\the\temptoksb}}}%   
 \futurelet\@nextcs
 \@int
}
\def\@int{%
   \ifx\@nextcs\limits
      \typeout{Found limits}%
      \temptoksc={\limits}%
	  \let\@next\@intgobble%
   \else\ifx\@nextcs\nolimits
      \typeout{Found nolimits}%
      \temptoksc={\nolimits}%
	  \let\@next\@intgobble%
   \else
      \typeout{Did not find limits or no limits}%
      \temptoksc={}%
      \let\@next\msi@limits%
   \fi\fi
   \@next   
}%
\def\@intgobble#1{%
   \typeout{arg is #1}%
   \msi@limits
}
\def\msi@limits{%
   \temptoksa={}%
   \temptoksb={}%
   \@ifnextchar_{\@limitsa}{\@limitsb}%
}
\def\@limitsa_#1{%
   \temptoksa={#1}%
   \@ifnextchar^{\@limitsc}{\@temp}%
}
\def\@limitsb{%
   \@ifnextchar^{\@limitsc}{\@temp}%
}
\def\@limitsc^#1{%
   \temptoksb={#1}%
   \@ifnextchar_{\@limitsd}{\@temp}%   
}
\def\@limitsd_#1{%
   \temptoksa={#1}%
   \@temp
}
\def\dint{\msi@int\displaystyle\int}%
\def\diint{\msi@int\displaystyle\iint}%
\def\diiint{\msi@int\displaystyle\iiint}%
\def\diiiint{\msi@int\displaystyle\iiiint}%
\def\didotsint{\msi@int\displaystyle\idotsint}%
\def\doint{\msi@int\displaystyle\oint}%
\def\ExitTCILatex{\makeatother }
\if@compatibility\message{amsmath already loaded}\fi\aftergroup\ExitTCILatex}
\if@compatibility\message{amstex already loaded}\fi\aftergroup\ExitTCILatex}
\if@compatibility\message{amsgen already loaded}\fi\aftergroup\ExitTCILatex}
\let\DOTSI\relax
\def\RIfM@{\relax\ifmmode}%
\def\FN@{\futurelet\next}%
\def\iint{\DOTSI\intno@\tw@\FN@\ints@}%
\def\iiint{\DOTSI\intno@\thr@@\FN@\ints@}%
\def\iiiint{\DOTSI\intno@4 \FN@\ints@}%
\def\idotsint{\DOTSI\intno@\z@\FN@\ints@}%
\def\ints@{\findlimits@\ints@@}%
\newif\iflimtoken@
\newif\iflimits@
\def\findlimits@{\limtoken@true\ifx\next\limits\limits@true
 \else\ifx\next\nolimits\limits@false\else
 \limtoken@false\ifx\ilimits@\nolimits\limits@false\else
 \ifinner\limits@false\else\limits@true\fi\fi\fi\fi}%
\def\multint@{\int\ifnum\intno@=\z@\intdots@                          %1
 \else\intkern@\fi                                                    %2
 \ifnum\intno@>\tw@\int\intkern@\fi                                   %3
 \ifnum\intno@>\thr@@\int\intkern@\fi                                 %4
 \int}%                                                               %5
\def\multintlimits@{\intop\ifnum\intno@=\z@\intdots@\else\intkern@\fi
 \ifnum\intno@>\tw@\intop\intkern@\fi
 \ifnum\intno@>\thr@@\intop\intkern@\fi\intop}%
\def\intic@{%
    \mathchoice{\hskip.5em}{\hskip.4em}{\hskip.4em}{\hskip.4em}}%
\def\negintic@{\mathchoice
 {\hskip-.5em}{\hskip-.4em}{\hskip-.4em}{\hskip-.4em}}%
\def\ints@@{\iflimtoken@                                              %1
 \def\ints@@@{\iflimits@\negintic@
   \mathop{\intic@\multintlimits@}\limits                             %2
  \else\multint@\nolimits\fi                                          %3
  \eat@}%                                                             %4
 \else                                                                %5
 \def\ints@@@{\iflimits@\negintic@
  \mathop{\intic@\multintlimits@}\limits\else
  \multint@\nolimits\fi}\fi\ints@@@}%
\def\intkern@{\mathchoice{\!\!\!}{\!\!}{\!\!}{\!\!}}%
\def\plaincdots@{\mathinner{\cdotp\cdotp\cdotp}}%
\def\intdots@{\mathchoice{\plaincdots@}%
 {{\cdotp}\mkern1.5mu{\cdotp}\mkern1.5mu{\cdotp}}%
 {{\cdotp}\mkern1mu{\cdotp}\mkern1mu{\cdotp}}%
 {{\cdotp}\mkern1mu{\cdotp}\mkern1mu{\cdotp}}}%
\def\RIfM@{\relax\protect\ifmmode}
\def\text{\RIfM@\expandafter\text@\else\expandafter\mbox\fi}
\let\nfss@text\text
\def\text@#1{\mathchoice
   {\textdef@\displaystyle\f@size{#1}}%
   {\textdef@\textstyle\tf@size{\firstchoice@false #1}}%
   {\textdef@\textstyle\sf@size{\firstchoice@false #1}}%
   {\textdef@\textstyle \ssf@size{\firstchoice@false #1}}%
   \glb@settings}
\def\textdef@#1#2#3{\hbox{{%
                    \everymath{#1}%
                    \let\f@size#2\selectfont
                    #3}}}
\newif\iffirstchoice@
\def\Let@{\relax\iffalse{\fi\let\\=\cr\iffalse}\fi}%
\def\vspace@{\def\vspace##1{\crcr\noalign{\vskip##1\relax}}}%
\def\multilimits@{\bgroup\vspace@\Let@
 \baselineskip\fontdimen10 \scriptfont\tw@
 \advance\baselineskip\fontdimen12 \scriptfont\tw@
 \lineskip\thr@@\fontdimen8 \scriptfont\thr@@
 \lineskiplimit\lineskip
 \vbox\bgroup\ialign\bgroup\hfil$\m@th\scriptstyle{##}$\hfil\crcr}%
\def\Sb{_\multilimits@}%
\def\endSb{\crcr\egroup\egroup\egroup}%
\def\Sp{^\multilimits@}%
\newdimen\ex@
\def\rightarrowfill@#1{$#1\m@th\mathord-\mkern-6mu\cleaders
 \hbox{$#1\mkern-2mu\mathord-\mkern-2mu$}\hfill
 \mkern-6mu\mathord\rightarrow$}%
\def\leftarrowfill@#1{$#1\m@th\mathord\leftarrow\mkern-6mu\cleaders
 \hbox{$#1\mkern-2mu\mathord-\mkern-2mu$}\hfill\mkern-6mu\mathord-$}%
\def\leftrightarrowfill@#1{$#1\m@th\mathord\leftarrow
\mkern-6mu\cleaders
 \hbox{$#1\mkern-2mu\mathord-\mkern-2mu$}\hfill
 \mkern-6mu\mathord\rightarrow$}%
\def\overrightarrow{\mathpalette\overrightarrow@}%
\def\overrightarrow@#1#2{\vbox{\ialign{##\crcr\rightarrowfill@#1\crcr
 \noalign{\kern-\ex@\nointerlineskip}$\m@th\hfil#1#2\hfil$\crcr}}}%
\def\overleftarrow{\mathpalette\overleftarrow@}%
\def\overleftarrow@#1#2{\vbox{\ialign{##\crcr\leftarrowfill@#1\crcr
 \noalign{\kern-\ex@\nointerlineskip}$\m@th\hfil#1#2\hfil$\crcr}}}%
\def\overleftrightarrow{\mathpalette\overleftrightarrow@}%
\def\overleftrightarrow@#1#2{\vbox{\ialign{##\crcr
   \leftrightarrowfill@#1\crcr
 \noalign{\kern-\ex@\nointerlineskip}$\m@th\hfil#1#2\hfil$\crcr}}}%
\def\underrightarrow{\mathpalette\underrightarrow@}%
\def\underrightarrow@#1#2{\vtop{\ialign{##\crcr$\m@th\hfil#1#2\hfil
  $\crcr\noalign{\nointerlineskip}\rightarrowfill@#1\crcr}}}%
\def\underleftarrow{\mathpalette\underleftarrow@}%
\def\underleftarrow@#1#2{\vtop{\ialign{##\crcr$\m@th\hfil#1#2\hfil
  $\crcr\noalign{\nointerlineskip}\leftarrowfill@#1\crcr}}}%
\def\underleftrightarrow{\mathpalette\underleftrightarrow@}%
\def\underleftrightarrow@#1#2{\vtop{\ialign{##\crcr$\m@th
  \hfil#1#2\hfil$\crcr
 \noalign{\nointerlineskip}\leftrightarrowfill@#1\crcr}}}%
\def\qopnamewl@#1{\mathop{\operator@font#1}\nlimits@}
\let\nlimits@\displaylimits
\def\setboxz@h{\setbox\z@\hbox}
\def\varlim@#1#2{\mathop{\vtop{\ialign{##\crcr
 \hfil$#1\m@th\operator@font lim$\hfil\crcr
 \noalign{\nointerlineskip}#2#1\crcr
 \noalign{\nointerlineskip\kern-\ex@}\crcr}}}}
 \def\rightarrowfill@#1{\m@th\setboxz@h{$#1-$}\ht\z@\z@
  $#1\copy\z@\mkern-6mu\cleaders
  \hbox{$#1\mkern-2mu\box\z@\mkern-2mu$}\hfill
  \mkern-6mu\mathord\rightarrow$}
\def\leftarrowfill@#1{\m@th\setboxz@h{$#1-$}\ht\z@\z@
  $#1\mathord\leftarrow\mkern-6mu\cleaders
  \hbox{$#1\mkern-2mu\copy\z@\mkern-2mu$}\hfill
  \mkern-6mu\box\z@$}
\def\projlim{\qopnamewl@{proj\,lim}}
\def\injlim{\qopnamewl@{inj\,lim}}
\def\varinjlim{\mathpalette\varlim@\rightarrowfill@}
\def\varprojlim{\mathpalette\varlim@\leftarrowfill@}
\def\varliminf{\mathpalette\varliminf@{}}
\def\varliminf@#1{\mathop{\underline{\vrule\@depth.2\ex@\@width\z@
   \hbox{$#1\m@th\operator@font lim$}}}}
\def\varlimsup{\mathpalette\varlimsup@{}}
\def\varlimsup@#1{\mathop{\overline
  {\hbox{$#1\m@th\operator@font lim$}}}}
\def\align{\@verbatim \frenchspacing\@vobeyspaces \@alignverbatim
You are using the "align" environment in a style in which it is not defined.}
\let\csname endalign*\endcsname =\endtrivlist
\def\alignat{\@verbatim \frenchspacing\@vobeyspaces \@alignatverbatim
You are using the "alignat" environment in a style in which it is not defined.}
\let\csname endalignat*\endcsname =\endtrivlist
\def\xalignat{\@verbatim \frenchspacing\@vobeyspaces \@xalignatverbatim
You are using the "xalignat" environment in a style in which it is not defined.}
\let\csname endxalignat*\endcsname =\endtrivlist
\def\gather{\@verbatim \frenchspacing\@vobeyspaces \@gatherverbatim
You are using the "gather" environment in a style in which it is not defined.}
\let\csname endgather*\endcsname =\endtrivlist
\def\multiline{\@verbatim \frenchspacing\@vobeyspaces \@multilineverbatim
You are using the "multiline" environment in a style in which it is not defined.}
\let\csname endmultiline*\endcsname =\endtrivlist
\def\arrax{\@verbatim \frenchspacing\@vobeyspaces \@arraxverbatim
You are using a type of "array" construct that is only allowed in AmS-LaTeX.}
\def\tabulax{\@verbatim \frenchspacing\@vobeyspaces \@tabulaxverbatim
You are using a type of "tabular" construct that is only allowed in AmS-LaTeX.}
\let\csname endarrax*\endcsname =\endtrivlist
\let\csname endtabulax*\endcsname =\endtrivlist
 \def\endequation{%
     \ifmmode\ifinner % FLEQN hack
      \iftag@
        \addtocounter{equation}{-1} % undo the increment made in the begin part
        $\hfil
           \displaywidth\linewidth\@taggnum\egroup \endtrivlist
        \global\tag@false
        \global\@ignoretrue   
      \else
        $\hfil
           \displaywidth\linewidth\@eqnnum\egroup \endtrivlist
        \global\tag@false
        \global\@ignoretrue 
      \fi
     \else   
      \iftag@
        \addtocounter{equation}{-1} % undo the increment made in the begin part
        \eqno \hbox{\@taggnum}
        \global\tag@false%
        $$\global\@ignoretrue
      \else
        \eqno \hbox{\@eqnnum}% $$ BRACE MATCHING HACK
        $$\global\@ignoretrue
      \fi
     \fi\fi
 } 
 \newif\iftag@ \tag@false
 \def\TCItag{\@ifnextchar*{\@TCItagstar}{\@TCItag}}
 \def\@TCItag#1{%
     \global\tag@true
     \global\def\@taggnum{(#1)}%
     \global\def\@currentlabel{#1}}
 \def\@TCItagstar*#1{%
     \global\tag@true
     \global\def\@taggnum{#1}%
     \global\def\@currentlabel{#1}}
     \def\tag{\@ifnextchar*{\@tagstar}{\@tag}}
     \def\@tag#1{%
         \global\tag@true
         \global\def\@taggnum{(#1)}}
     \def\@tagstar*#1{%
         \global\tag@true
         \global\def\@taggnum{#1}}
\def\dfrac#1#2{{\displaystyle {#1 \over #2}}}%
\newtheorem{examples}[example]{Examples}
\begin{document}

\date{2 April 2014}

\begin{frontmatter}

\title{Isoclinic Crossed Modules with GAP Implementations }

\thanks{This research was partly supported by .....}

\author{A. Odaba\c{s}}
\address{Osmangazi University \\ Department of
Mathematics - Computer \\Eski\c{s}ehir-Turkey}
\ead{aodabas@ogu.edu.tr}
\ead[url]{http://fef.ogu.edu.tr/matbil/aodabas/}

\author{E.\"{O}. Uslu}
\address{Osmangazi University \\ Department of
Mathematics - Computer \\Eski\c{s}ehir-Turkey}
\ead{enveruslu@ogu.edu.tr}
\ead[url]{http://fef.ogu.edu.tr/matbil/enveruslu/}

\author{E. Ilgaz}
\address{Osmangazi University \\ Department of
Mathematics - Computer \\Eski\c{s}ehir-Turkey}
\ead{eilgaz@ogu.edu.tr}
\ead[url]{http://fef.ogu.edu.tr/matbil/eilgaz/}

\begin{abstract}
We introduce the isoclinism of crossed modules. We also give GAP
implementations for constructing the isoclinism families of finite crossed
modules and consequently give an enumeration about isoclinic crossed modules
existing in the GAP library. % Abstract text, no more than 200 words.
\end{abstract}

\begin{keyword}
Isoclinism, Crossed Module, GAP
\end{keyword}

\end{frontmatter}

\section{Introduction}
The notion of isoclinism was first defined in \cite{PH}, for a
classification of finite groups whose orders are prime powers. This work was
detailed in \cite{MH}. After then, some new results
were given in many papers, such as \cite{MRJ,RM,HM,FP,ARS,JT}. Also the
relation between groups and their stem extensions with Schur multiplicators
were given in \cite{TB} where they consider the isoclinism
of central group extensions. In this work we consider the 2-dimensional
group version, \ called \textquotedblleft crossed module\textquotedblright
,\ of isoclinism and determine some basic results. This construction gives a
new equivalence relation on crossed modules weaker then isomorphism. So we
have a new classification of crossed modules and the resulting equivalence
classes, called isoclinism families, are convenient with some algebraic
invariants such as nilpotency classes, rank and middle length. For determine
such a comparison, we defined some new concepts for crossed modules such as;
class preserving actor, rank and middle length of crossed modules etc.

In general, the isoclinism is used for classification of finite groups, and
there are many works concerning the enumeration of groups with finite order
related to isoclinism, \cite{MH,RJ}. So, we first construct GAP
implementation for isoclinic groups, and as an example we give a character
table consisting of the isoclinism families of the group with order 192.
Enumeration of crossed modules and related structures are determined from
many view points. In \cite{AW,RB,GE,GE1,GE2}, one can found many
computations about these notions with GAP. By a similar way, we give a
GAP implementation for classification of finite crossed modules up to
isoclinism. For this, we added some new functions which do not exist in XMod
package, such as; \texttt{DerivedSubXMod(XM)}, \texttt{FactorXMod(XM,NM)},
\texttt{IsIsoclinicXMod(XM1,XM2)}.

In order to get our goals, we organize the paper as follows;

In Section 2, we recall some needed results about crossed modules and
introduce some new notions which will be used in the sequel of the paper.

In Section 3, we introduce the notion of isoclinism for crossed modules and
establish the basic theory. As expected, we give the compatibility of this
definition with nilpotency, solvability and class preserving actors of
crossed modules as it was the case for groups.

In Section 4, we give GAP implementations for computing the isoclinism
families of crossed modules in low order.

In Section 5, by using these implementations, we give character tables
consisting isoclinism families of certain crossed modules. These tables are
particular examples which show that the definition of isoclinism is convenient
with the nilpotency classes and other invariants in low order.

\section{Preliminaries}

In this section we recall some needed material about crossed modules.
Crossed modules were defined by \cite{WH}, as an
algebraic model for homotopy 3-types. We refer to \cite{RB2, NO, NO1, TP2},
for a compherensive and detailed work.

\begin{definition}
\textit{A crossed module} is a group homomorphism%
\begin{equation*}
d:G_{1}\longrightarrow G_{0}
\end{equation*}%
with a left action from $G_{0}$ on $G_{1}$ written $(g_{0},g_{1})\mapsto $ $%
^{g_{0}}g_{1},$ for $g_{0}\in G_{0},g_{1}\in G_{1}$ satisfying the following
conditions:%
\begin{equation*}
\begin{array}{llll}
1) & d(^{g_{0}}g_{1}) & = & g_{0}d(g_{1})g_{0}^{-1}, \\
2) & ^{d(g_{1})}g_{1}^{\prime } & = & g_{1}g_{1}^{\prime }g_{1}^{-1},%
\end{array}%
\end{equation*}%
for all $g_{0}\in G_{0},$ $g_{1},g_{1}^{\prime }\in G_{1}.$
\end{definition}

We will denote such a crossed module by $G:G_{1}\overset{d}{\longrightarrow }%
G_{0}.$

\begin{examples}
\ \ \ \ \
\end{examples}

(1) Let $N$ be a normal subgroup of $M$. Then {\xymatrix {N
\ar@{^{(}->}[r]^{inc.} &M}} is a crossed module with conjugate action of $M$
on $N$. Consequently, every group $M$ can be thought as a crossed module in
the two obvious way: {\xymatrix {1 \ar@{^{(}->}[r]^{inc.} &M}} or $M\overset{%
id}{\longrightarrow }M.$

(2) $1\overset{1}{\longrightarrow }1$ is a crossed module and it is called
the \textit{trivial crossed module}.

(3) $K\overset{1}{\longrightarrow }L$ is a crossed module, where $K$ is a $L$%
-module and the boundary operator is the zero map.

(4) $M\overset{c}{\longrightarrow }Aut(M)$ is a crossed module, where $c$
assigns to each element $x\in M$, the inner automorphism $c_{x}$ of $M$
defined by $c_{x}(m)=xmx^{-1},$ for all $m\in M.$

\begin{definition}
A crossed module $G:G_{1}\overset{d}{\longrightarrow }G_{0}$ is called
\textit{aspherical} if ker$d=1,$ i.e $d$ is injective, and \textit{simply
connected} if coker$d=1,$ i.e $d$ is surjective.
\end{definition}

\textit{A morphism} between two crossed modules $G:G_{1}\overset{d}{%
\longrightarrow }G_{0}$ and $G^{\prime }:G_{1}^{\prime }\overset{d^{\prime }}%
{\longrightarrow }G_{0}^{\prime }$ is a pair $(\alpha ,\beta )$ of group
homomorphisms $\alpha :G_{1}\longrightarrow G_{1}^{\prime },$ $\beta
:G_{0}\longrightarrow G_{0}^{\prime },$ such that $\beta d=d^{\prime }\alpha
$ and $\alpha (^{g_{0}}g_{1})=$ $^{\beta (g_{0})}\alpha (g_{1}),$ for all $%
g_{0}\in G_{0},$ $g_{1}\in G_{1}$. Consequently we have the category $%
\mathbf{XMod}$ whose objects are the crossed modules and its morphisms are
the morphisms of crossed modules.

\begin{remark}
Since $M\overset{id}{\longrightarrow }M$ is a crossed module, for any group $%
M$, the category of groups can be thought as a full subcategory of crossed
modules.
\end{remark}

We say that $(\alpha ,\beta )$ is an automorphism of $G$ if $\alpha $ and $%
\beta $ are both automorphisms. We denote the group of automorphisms of the
crossed module $G:G_{1}\overset{d}{\longrightarrow }G_{0}$ by $Aut(G)$ where
the multiplication is defined by componentwise composition.

Existing of zero object and equalizers give rise to subobjects and normal
subobjects. A crossed module $H:H_{1}\overset{d_{H}}{\longrightarrow }H_{0}$
is a \textit{subcrossed module} of a crossed module $G:G_{1}\overset{d_{G}}{%
\longrightarrow }G_{0}$ if $H_{1},$ $H_{0}$ are subgroups of $G_{1}$, $%
G_{0}, $ respectively, $d_{H}=d_{G}|_{H_{1}}$ and the action of $H_{0}$ on $%
H_{1}$ is induced by the action of $G_{0}$ on $G_{1}.$ Also, a subcrossed
module $H:H_{1}\overset{d_{H}}{\longrightarrow }H_{0}$ of a crossed module $%
G:G_{1}\overset{d_{G}}{\longrightarrow }G_{0}$ is \textit{normal} if $H_{0}$
is a normal subgroup of $G_{0},$ $^{g_{0}}h_{1}\in H_{1}$ and $%
^{h_{0}}g_{1}g_{1}^{-1}\in H_{1},$ for all $g_{0}\in G_{0},$ $g_{1}\in
G_{1}, $ $h_{0}\in H_{0},$ $h_{1}\in H_{1}.$ Consequently, we have the
\textit{quotient crossed module} $G/H:G_{1}/H_{1}\overset{\overline{d_{G}}}{%
\longrightarrow }G_{0}/H_{0}$ with the induced boundary map and action.

\begin{definition}
Let $(\alpha ,\beta ):(G:G_{1}\overset{d}{\longrightarrow }%
G_{0})\longrightarrow (G^{\prime }:G_{1}^{\prime }\overset{d^{\prime }}{%
\longrightarrow }G_{0}^{\prime })$ be a crossed mo\-dule morphism. The
\textit{kernel} of $(\alpha ,\beta )$ is the normal subcrossed module $(\ker
\alpha ,$ $\ker \beta ,$ $d|)$ of $G,$ denoted by $\ker (\alpha ,\beta )$
and the \textit{image} $\func{Im}(\alpha ,\beta )$ is the subcrossed module $%
\left( \func{Im}\alpha ,\func{Im}\beta ,d^{\prime }|\right) $ of $G^{\prime
}.$
\end{definition}

Analogous to group theory, we have the third isomorphism theorem for crossed
modules given in \cite{NO1}.

Let $H:H_{1}\overset{d}{\longrightarrow }H_{0}$ and $K:K_{1}\overset{d}{%
\longrightarrow }K_{0}$ be a subcrossed modules of $G:G_{1}\overset{d}{%
\longrightarrow }G_{0}.$ Then the \textit{intersection} of $H$ and $K$ is
defined as the subcrossed module%
\begin{equation*}
H\cap K:H_{1}\cap K_{1}\overset{d}{\longrightarrow }H_{0}\cap K_{0},
\end{equation*}%
which is normal in $G.$ $HK$ is also defined as the crossed submodule $%
HK:H_{1}K_{1}\overset{d}{\longrightarrow }H_{0}K_{0}$ when $K$ is normal.
Consequently, we have

\begin{equation*}
\frac{H}{H\cap K}\cong \frac{HK}{K}.
\end{equation*}

\begin{definition}
Let $G:G_{1}\overset{d}{\longrightarrow }G_{0}$ be a crossed module. A
\textit{derivation} from $G_{0}$ to $G_{1}$ is the map $\partial
:G_{0}\longrightarrow G_{1}$ such that $\partial (xy)=\partial
(x)^{x}\partial (y),$ for all $x,y\in G_{0}.$
\end{definition}

\noindent The set of all derivations is denoted by $Der(G_{0},G_{1}).$

As defined in \cite{WH}, $Der(G_{0},G_{1})$ is a semigroup with
the multiplication $\partial _{1}\circ \partial _{2}$ defined by%
\begin{equation*}
(\partial _{1}\circ \partial _{2})(g_{0})=\partial _{1}(d\partial
_{2}(g_{0})g_{0})\partial _{2}(g_{0})=(\partial _{1}d(g_{1})g_{1})\partial
_{2}(g_{0})\partial _{1}(g_{0}),
\end{equation*}
for all $g_{0}\in G_{0},$ $\partial _{1},$ $\partial _{2}\in
Der(G_{0},G_{1}) $ where the identity element is the zero map. The \textit{%
Whitehead group} $D(G_{0},G_{1})$ is defined to be the group of units of $%
Der(G_{0},G_{1}),$ and the elements of $D(G_{0},G_{1})$ are called regular
derivations.

Due to \cite{NO}, for a given crossed module $G:G_{1}\overset{d}{%
\longrightarrow }G_{0},$ we have the crossed module
\begin{equation*}
\begin{array}{cccc}
\Delta : & D(G_{0},G_{1}) & \longrightarrow & Aut(G) \\
& \partial & \longmapsto & (\partial d,d\partial )%
\end{array}%
\end{equation*}%
with the action of $Aut(G)$ on $D(G_{0},G_{1})$ given by $^{(\alpha
,\beta )}\partial =\alpha \partial \beta ^{-1}$, for all $\left( \alpha
,\beta \right) \in Aut(G),$ $\partial \in D(G_{0},G_{1}).$ This crossed
module is called the \textit{actor }of $G$, and denoted by $Act(G).$

This structure was introduced by \cite{LU} and developed in
\cite{NO}. The actor objects are defined for introduce the actions in the
category of crossed modules from which the objects such as centers,
commutators, Abelian objects, semi direct products are defined.

There is a canonical morphism of crossed modules%
\begin{equation*}
(\eta ,\gamma ):G\longrightarrow Act(G)
\end{equation*}%
given by $\eta :G_{1}\longrightarrow D(G_{0},G_{1}),$ $\eta
_{g_{1}}(g_{0})=g_{1}$ $^{g_{0}}g_{1}^{-1}$ and $\gamma
:G_{0}\longrightarrow Aut(G),$ $\gamma (g_{0})=(\alpha _{g_{0}},\phi
_{g_{0}})$ such that $\alpha _{g_{0}}(g_{1})=$ $^{g_{0}}g_{1}$ and $\phi
_{g_{0}}(g_{0}^{\prime })=g_{0}g_{0}^{\prime }g_{0}^{-1},$ for all $%
g_{0},g_{0}^{\prime }\in G_{0},g_{1}\in G_{1}.$ The image of the morphism $%
(\eta ,\gamma )$ is called the \textit{inner actor} of the crossed module $%
G:G_{1}\overset{d}{\longrightarrow }G_{0},$ denoted by $InnAct(G).$

\begin{definition}
\label{09} The\textit{\ center} of the crossed module $G:G_{1}\overset{d}{%
\longrightarrow }G_{0}$ is defined as the normal subcrossed module%
\begin{equation*}
Z(G):G_{1}^{G_{0}}\overset{d|}{\longrightarrow }St_{G_{0}}(G_{1})\cap
Z(G_{0})
\end{equation*}%
where%
\begin{equation*}
G_{1}^{G_{0}}=\{g_{1}\in G_{1}:\text{ }^{g_{0}}g_{1}=g_{1},\text{ for all }%
g_{0}\in G_{0}\},
\end{equation*}%
\begin{equation*}
St_{G_{0}}(G_{1})=\{g_{0}\in G_{0}:\text{ }^{g_{0}}g_{1}=g_{1},\text{ for
all }g_{1}\in G_{1}\}
\end{equation*}%
and $Z(G_{0})$ is the center of $G_{0}.$This definition recovers the
generalized definition of Huq given in \cite{HU}. (Here $G_{1}^{G_{0}}$ is
called \textit{fixed point subgroup} of $G_{1}$ and $St_{G_{0}}(G_{1})$ is
called\textit{\ the stabilizer of }$G_{1}$ in $G_{0}.$) So, $G:G_{1}\overset{%
d}{\longrightarrow }G_{0}$ is called \textit{Abelian} when $G=Z(G).$
\end{definition}

\begin{definition}
\label{11} Let $G:G_{1}\overset{d}{\longrightarrow }G_{0}$ be a crossed
module. The \textit{commutator subcrossed module} $[G,G]$ of $G$ is defined
by%
\begin{equation*}
\lbrack G,G]:D_{G_{0}}(G_{1})\overset{d|}{\longrightarrow }[G_{0},G_{0}]
\end{equation*}%
where $D_{G_{0}}(G_{1})$ is the subgroup generated by $\
\{^{g_{0}}g_{1}g_{1}^{-1}:g_{1}\in G_{1},$ $g_{0}\in G_{0}\}$ and $%
[G_{0},G_{0}]$ is the commutator subgroup of $G_{0}$.
\end{definition}

\begin{proposition}
\label{10} Let $G:G_{1}\overset{d}{\longrightarrow }G_{0}$ be a crossed
module. Then we have the following;\newline
(i) If $G$ is simply connected, then $G_{1}^{G_{0}}=Z(G_{1})$ and $%
D_{G_{0}}(G_{1})=[G_{1},G_{1}]$.\newline
(ii) If $G$ is aspherical, then $Z(G_{0})=St_{G_{0}}(G_{1})\cap Z(G_{0})$.
\end{proposition}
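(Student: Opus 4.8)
The plan is to handle the two parts separately, in each case exploiting exactly one of the two crossed-module axioms together with the surjectivity (resp.\ injectivity) hypothesis.

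For part (i), I would start from axiom (2), ${}^{d(g_1)}g_1' = g_1 g_1' g_1^{-1}$, which converts the action of an element of the form $d(g_1)$ into conjugation inside $G_1$. To prove $G_1^{G_0} = Z(G_1)$, rewrite membership in $Z(G_1)$: $g_1 \in Z(G_1)$ means $g_1' g_1 g_1'^{-1} = g_1$ for every $g_1' \in G_1$, which by axiom (2) is the same as ${}^{d(g_1')}g_1 = g_1$ for every $g_1'$. The inclusion $G_1^{G_0} \subseteq Z(G_1)$ is then immediate (specialise $g_0 = d(g_1')$), and for the reverse inclusion I would use that $d$ is onto, so that $\{d(g_1') : g_1' \in G_1\}$ exhausts $G_0$ and the condition ${}^{d(g_1')}g_1 = g_1$ upgrades to ${}^{g_0}g_1 = g_1$ for all $g_0$. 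For the equality $D_{G_0}(G_1) = [G_1,G_1]$ I would compare generating sets: axiom (2) gives ${}^{d(a)}b \cdot b^{-1} = [a,b]$, so each basic commutator is a generator of $D_{G_0}(G_1)$, while surjectivity of $d$ ensures conversely that every generator ${}^{g_0}b \cdot b^{-1}$ equals ${}^{d(a)}b \cdot b^{-1} = [a,b]$ for a suitable $a$; matching the generating sets forces the subgroups to coincide.

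For part (ii), the relevant axiom is (1), $d({}^{g_0}g_1) = g_0 d(g_1) g_0^{-1}$. Since $St_{G_0}(G_1) \cap Z(G_0) \subseteq Z(G_0)$ holds trivially, it suffices to prove the reverse inclusion $Z(G_0) \subseteq St_{G_0}(G_1)$. Taking $g_0 \in Z(G_0)$ and any $g_1 \in G_1$, centrality gives $g_0 d(g_1) g_0^{-1} = d(g_1)$, so axiom (1) yields $d({}^{g_0}g_1) = d(g_1)$; injectivity of $d$ (asphericity) then forces ${}^{g_0}g_1 = g_1$, that is, $g_0 \in St_{G_0}(G_1)$.

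The computations are short, so there is no single deep difficulty here; the only point demanding care is to keep track of which inclusion actually consumes the hypothesis. In part (i) each of the two equalities has one free inclusion coming directly from the axioms and one inclusion that genuinely needs $d$ to be onto, and in part (ii) the single nontrivial inclusion is exactly where injectivity of $d$ enters. Stating these dependencies in the right places is the main thing to watch.
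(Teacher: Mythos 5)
Your proof is correct and is precisely the direct calculation that the paper leaves to the reader (its own proof reads only ``Can be checked by a direct calculation''): axiom (2) plus surjectivity of $d$ for part (i), axiom (1) plus injectivity of $d$ for part (ii). Your bookkeeping of which inclusion is free and which consumes the hypothesis is accurate, so nothing needs to be added.
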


\begin{proof}
Can be checked by a direct calculation.
\end{proof}

Finally, we have stem crossed modules defined as the crossed modules whose
centers are in their commutators. This definition was introduced in \cite{VC}%
.

\begin{example}
The crossed module $G:Kl_{4}\overset{d}{\longrightarrow }C_{3}$ is a stem
crossed module. Since $Z(G)=\{e\}\longrightarrow \{e\}$ and $%
[G,G]:Kl_{4}\longrightarrow \{e\},$ we have $Z(G)\subseteq \lbrack G,G].$
\end{example}

\begin{definition}
A crossed module $G:G_{1}\overset{d}{\longrightarrow }G_{0}$ is called
\textit{\ finite }if\textit{\ }$G_{1}$ and $G_{0}$ are finite groups.
\end{definition}

The order of a finite crossed module is defined as the pair $[m,n]$ where $%
m, $ $n$ are the orders of $G_{1},$ $G_{0},$ respectively.

Let $H:H_{1}\longrightarrow H_{0}$ be a subcrossed module of the crossed
mo\-dule $G:G_{1}\longrightarrow G_{0}.$ Suppose that there is a finite
sequence $H^{i}:(H_{1}^{i}\longrightarrow H_{0}^{i})_{0\leq i\leq n}$ of the
subcrossed modules of $G$ such that%
\begin{equation*}
H=H^{0}\trianglelefteq H^{1}\trianglelefteq \ldots \trianglelefteq
H^{n-1}\trianglelefteq H^{n}=G.
\end{equation*}%
This will be called a \textit{series} of length $n$ from $H$ to $G.$ The
subcrossed modules $H^{0},$ $H^{1},$\ldots $,$ $H^{n}$ are called the
\textit{terms} of the series and quotient crossed modules $H^{i}/H^{i-1},$ $%
i=1,\ldots ,n,$ are called the \textit{factors} of the series. A series $1$
to $G$ is shortly called a \textit{series} of $G.$ A series is called
\textit{central} if all factors are central. $G$ is called \textit{solvable}
if it has a series all of whose factors are Abelian crossed modules and is
called \textit{nilpotent} if it has a series all of whose factors are
central factors of $G.$

\begin{definition}
Let $G:G_{1}\longrightarrow G_{0}$ be nilpotent. Then, for any central
series
\begin{equation*}
1=G^{0}\trianglelefteq G^{1}\trianglelefteq \cdots \trianglelefteq
G^{r}=G_{0}
\end{equation*}%
of $G$, we have
\begin{equation*}
\Gamma _{r-i+1}(G)\leq G^{i}\leq \xi _{i}(G),
\end{equation*}

$i=0,1,\ldots ,r$ where $\Gamma _{1}(G)=(G)$, $\xi _{0}(G)=1$ and%
\begin{equation*}
\Gamma _{n}(G)=[\Gamma _{n-1}(G),G],\text{ }n>1
\end{equation*}%
\begin{equation*}
\xi _{n}(G_{0})/\xi _{n-1}(G_{0})=\xi (G_{0}/\xi _{n-1}(G_{0}))\text{, }n>0.
\end{equation*}%
Furthermore, the least integer $c$ such that $\Gamma _{c+1}(G)=1$ is equal
to the least integer $c$ such that $\xi _{c}(G)=G$. The integer $c$ is
called the \textit{nilpotency class }of the crossed module $G.$
\end{definition}

\begin{definition}
Let $G:G_{1}\longrightarrow G_{0}$ be solvable and let $n$ be the least
integer such that $G^{n}=1$ where $G^{n}$ is the subcrossed module of $G$ such
that $G^{0}=G$ and for each integer $n>0$, $G^{n}=[G^{n-1},G^{n-1}].$ Then
we call $n$ the \textit{derived length} of $G.$
\end{definition}

\section{Isoclinic Crossed Modules}

In this section we introduce the notion of isoclinic crossed modules to have
a new equivalence relation on crossed modules weaker then isomorphism, which
gives rise to a new classification. First, we recall the definition of
isoclinic groups from \cite{PH}.

Let $M$ and $N$ be two groups. $M$ and $N$ are \textit{isoclinic} if there
exist isomorphisms $\eta :M/Z(M)\longrightarrow N/Z(N)$ and $\xi
:[M,M]\longrightarrow \lbrack N,N]$ between central quotients and derived
subgroups, respectively, such that, the following diagram is commutative:

\begin{equation*}
\xymatrix { {{M}/{Z(M)}} \times {{M}/{Z(M)}} \ar[rr]^-{c_{M}} \ar[d]_{{\eta}\times{\eta}}  & & [M,M] \ar[d]^{\xi} \\ {N}/{Z(N)}\times{N}/{Z(N)} \ar[rr]_-{c_{N}}  & & [N,N] }
\end{equation*}

\noindent where $c_{M},c_{N}$ are commutator maps of groups. The pair $(\eta
,\xi )$ is called an isoclinism from $M$ to $N,$ and denoted by $(\eta ,\xi
):M\sim N.$

\begin{remark}
As expected, isoclinism is an equivalence relation.
\end{remark}

\begin{examples}
\ \ \
\end{examples}

(1) All Abelian groups are isoclinic to each other.

(2) The dihedral, quasidihedral and quaternion groups of order $2^{n}$ are
isoclinic, for $n\geq 3.$

(3) Every group is isoclinic to a stem group. (See \cite{PH}, for details.)%
\newline

Now we are going to define the notion of isoclinic crossed modules.

{\noindent \textbf{Notation}} In the sequel of the paper, for a given
crossed module $G:G_{1}\overset{d_{G}}{\longrightarrow }G_{0},$ we denote $%
G/Z(G)$ by $\overline{G_{1}}\overset{\overline{d_{G}}}{\longrightarrow }%
\overline{G_{0}}$ where $\overline{G_{1}}=G_{1}/G_{1}^{G_{0}}$ and $%
\overline{G_{0}}=G_{0}/(St_{G_{0}}(G_{1})\cap Z(G_{0}))$, for shortness.

\begin{definition}
\label{04} The crossed modules $G:G_{1}\overset{d_{G}}{\longrightarrow }%
G_{0} $ and $H:H_{1}\overset{d_{H}}{\longrightarrow }H_{0}$ are \textit{%
isoclinic} if there exist isomorphisms
\end{definition}

\begin{equation*}
(\eta _{1},\eta _{0}):(\overline{G_{1}}\overset{\overline{d_{G}}}{%
\longrightarrow }\overline{G_{0}})\longrightarrow (\overline{H_{1}}\overset{%
\overline{d_{H}}}{\longrightarrow }\overline{H_{0}})
\end{equation*}%
and%
\begin{equation*}
(\xi _{1},\xi _{0}):(D_{G_{0}}(G_{1})\overset{d_{G}|}{\longrightarrow }%
[G_{0},G_{0}])\longrightarrow (D_{H_{0}}(H_{1})\overset{d_{H}|}{%
\longrightarrow }[H_{0},H_{0}])
\end{equation*}%
such that the diagrams%
\begin{equation}
\xymatrix { {\overline{G_{1}}} \times {\overline{G_{0}}} \ar[rr]^ -{c_1} \ar[d]_{{{\eta}_1} \times {{\eta}_0}}  & & D_{G_{0}}{(G_{1})} \ar[d]^{{\xi}_1} \\ {\overline{H_{1}}} \times {\overline{H_{0}}}   \ar[rr]_ -{{c_1}'}  & & D_{H_{0}}(H_{1}) }
\end{equation}%
and%
\begin{equation}
\xymatrix { {\overline{G_{0}}} \times {\overline{G_{0}}} \ar[rr]^ -{c_0} \ar[d]_{{{\eta}_0} \times {{\eta}_0}}  & & [G_0,G_0] \ar[d]^{{\xi}_0} \\ {\overline{H_{0}}} \times {\overline{H_{0}}}   \ar[rr]_ -{{c_0}'}  & & [H_0,H_0] }
\end{equation}%
are commutative where $c_{1},c_{1}^{\prime }$ defined by $%
c_{1}(g_{1}G_{1}^{G_{0}},g_{0}(St_{G_{0}}(G_{1})\cap Z(G_{0})))=$ $%
^{g_{0}}g_{1}g_{1}^{-1}$,\newline
$c_{1}^{\prime }(h_{1}H_{1}^{H_{0}},h_{0}(St_{H_{0}}(H_{1})\cap Z(H_{0})))=$
$^{h_{0}}h_{1}h_{1}^{-1},$ for all $g_{1}\in G_{1},$ $g_{0}\in G_{0},$ $%
h_{0}\in H_{0},$ $h_{1}\in H_{1}$

\noindent and $c_{0},c_{0}^{\prime }$ defined by $%
c_{0}(g_{0}(St_{G_{0}}(G_{1})\cap Z(G_{0})),$ $g_{0}^{^{\prime
}}(St_{G_{0}}(G_{1})\cap Z(G_{0})))=[g_{0},g_{0}^{\prime }],$ $c_{0}^{\prime
}(h_{0}(St_{H_{0}}(H_{1})\cap Z(H_{0})),h_{0}^{\prime
}(St_{G_{0}}(G_{1})\cap Z(G_{0})))=[h_{0},h_{0}^{\prime }],$ for all $%
g_{0},g_{0}^{\prime }\in G_{0}$ and $h_{0},h_{0}^{\prime }\in H_{0}.$ (The
well definition of the maps $c_{1},c_{0},c_{1}^{\prime }$ and $c_{0}^{\prime
}$ are given in Appendix A.)

The pair $((\eta _{1},\eta _{0}),(\xi _{1},\xi _{0}))$ is called an \textit{%
isoclinism} from $G$ to $H$ and this situation is denoted by $((\eta
_{1},\eta _{0}),(\xi _{1},\xi _{0})):G\sim H.$

\begin{remark}
If the crossed modules $G$ and $H$ are simply connected or finite, then the
commutativity of diagrams $(1)$ with $(2)$ in Definition \ref{04} are
equivalent to the commutativity of following diagram:%
\begin{equation*}
\xymatrix { {G/{Z(G)}} \times {G/{Z(G)}} \ar[rr]^-{} \ar[d]_{({\eta_1}, {\eta_0})\times({\eta_1}, {\eta_0})}  & & [G,G] \ar[d]^{({\xi_1},{\xi_0})} \\ {H/{Z(H)}} \times {H/{Z(H)}} \ar[rr]_-{}  & & [H,H] }
\end{equation*}
\end{remark}

\begin{examples}
\ \ \
\end{examples}

(1) All Abelian crossed modules are isoclinic.

(2) Let $M$ and $N$ be isoclinic groups. Then $M\overset{id}{\longrightarrow
}M$ is isoclinic to $N\overset{id}{\longrightarrow }N.$

(3) Let $M$ be a group and let $N$ be a normal subgroup of $M$ with $%
NZ(M)=M. $ Then {\xymatrix {N \ar@{^{(}->}[r]^{inc.} &M}} is isoclinic to $M%
\overset{id}{\longrightarrow }M.$

(4) Some particular examples can be found in Section $5$.

\begin{proposition}
Isoclinism is an equivalence relation.
\end{proposition}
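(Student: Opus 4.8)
The plan is to verify directly from Definition \ref{04} that the relation $\sim$ is reflexive, symmetric and transitive, in each case producing the required pair of crossed-module isomorphisms and checking that the analogues of diagrams $(1)$ and $(2)$ commute. For reflexivity I would take the identity automorphisms: the pair $((\mathrm{id},\mathrm{id}),(\mathrm{id},\mathrm{id}))$, where the first acts on $\overline{G_{1}}\to\overline{G_{0}}$ and the second on $D_{G_{0}}(G_{1})\to[G_{0},G_{0}]$, is plainly a pair of crossed-module isomorphisms, and diagrams $(1)$ and $(2)$ reduce to the identity on the commutator maps $c_{1}$ and $c_{0}$. Hence $((\mathrm{id},\mathrm{id}),(\mathrm{id},\mathrm{id})):G\sim G$.

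For symmetry, suppose $((\eta_{1},\eta_{0}),(\xi_{1},\xi_{0})):G\sim H$. Since $(\eta_{1},\eta_{0})$ and $(\xi_{1},\xi_{0})$ are isomorphisms of crossed modules, their componentwise inverses $(\eta_{1}^{-1},\eta_{0}^{-1})$ and $(\xi_{1}^{-1},\xi_{0}^{-1})$ are again crossed-module isomorphisms (in the opposite direction), and I claim this inverse pair is an isoclinism $H\sim G$. The point is purely formal: commutativity of $(1)$ reads $\xi_{1}\circ c_{1}=c_{1}^{\prime}\circ(\eta_{1}\times\eta_{0})$, and composing on the left with $\xi_{1}^{-1}$ and on the right with $(\eta_{1}^{-1}\times\eta_{0}^{-1})$ yields $c_{1}\circ(\eta_{1}^{-1}\times\eta_{0}^{-1})=\xi_{1}^{-1}\circ c_{1}^{\prime}$, which is exactly the commutativity of diagram $(1)$ for the pair $((\eta_{1}^{-1},\eta_{0}^{-1}),(\xi_{1}^{-1},\xi_{0}^{-1}))$; the same rearrangement applied to $(2)$ handles the second diagram. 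Thus $((\eta_{1}^{-1},\eta_{0}^{-1}),(\xi_{1}^{-1},\xi_{0}^{-1})):H\sim G$.

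For transitivity, given $((\eta_{1},\eta_{0}),(\xi_{1},\xi_{0})):G\sim H$ and $((\eta_{1}^{\prime},\eta_{0}^{\prime}),(\xi_{1}^{\prime},\xi_{0}^{\prime})):H\sim K$, I would take the componentwise composites $((\eta_{1}^{\prime}\eta_{1},\eta_{0}^{\prime}\eta_{0}),(\xi_{1}^{\prime}\xi_{1},\xi_{0}^{\prime}\xi_{0}))$; these are crossed-module isomorphisms $G/Z(G)\to K/Z(K)$ and $[G,G]\to[K,K]$, and the two commutative squares for $G\sim H$ paste onto those for $H\sim K$ to give commutativity for $G\sim K$. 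The only step that genuinely needs a word of justification—and the natural place for a hidden error—is the assertion used in symmetry that the inverse of a bijective crossed-module morphism is itself a crossed-module morphism, i.e. that $(\eta_{1}^{-1},\eta_{0}^{-1})$ again satisfies $\overline{d_{H}}\,\eta_{1}^{-1}=\eta_{0}^{-1}\,\overline{d_{G}}$ and the equivariance $\eta_{1}^{-1}(\,^{\eta_{0}(x)}y)=\,^{x}\eta_{1}^{-1}(y)$; both follow by applying the corresponding axioms for $(\eta_{1},\eta_{0})$ to preimages and using bijectivity. Everything else is routine diagram chasing, so I would state that verification explicitly and leave the remaining square-pasting to the reader.
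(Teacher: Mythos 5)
Your proof is correct and is precisely the ``direct calculation'' that the paper's proof invokes without spelling out: reflexivity via identities, symmetry via inverses of the crossed-module isomorphisms (with the needed remark that the inverse of a bijective crossed-module morphism is again a morphism), and transitivity via composition and pasting of the commutative squares. Since the paper gives no further detail, your argument matches its intended approach and fills in the omitted verification.
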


\begin{proof}
One can easily check by a direct calculation.
\end{proof}

In \cite{PH}, it is proved that every group is isoclinic to a stem
group. This property depents on the construction of Schur multiplicator and
stem extensions of a group as given in \cite{TB}. The same definitions and
constructions were given for crossed modules in \cite{VC}.

\begin{proposition}
Every crossed module is isoclinic to a stem crossed module.
\end{proposition}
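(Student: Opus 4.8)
The plan is to mirror the group-theoretic argument of \cite{PH}, transported to the category $\mathbf{XMod}$ via the center and commutator subcrossed modules of Definitions \ref{09} and \ref{11}. The guiding observation is that the data preserved by an isoclinism $((\eta_1,\eta_0),(\xi_1,\xi_0))$ is precisely the central quotient $G/Z(G)$ together with the commutator subcrossed module $[G,G]$ and the pairings $c_1,c_0$ of Definition \ref{04}; hence any construction that leaves these invariants unchanged produces a crossed module isoclinic to $G$. It therefore suffices to find, inside the isoclinism family of a given $G$, a representative whose center is contained in its commutator, i.e. a stem crossed module in the sense introduced just before the example with $Kl_4 \to C_3$.

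The key reduction is the following. Suppose $G:G_1 \to G_0$ is not a stem crossed module, so that $Z(G) \not\subseteq [G,G]$; by Definitions \ref{09} and \ref{11} this means $G_1^{G_0} \not\subseteq D_{G_0}(G_1)$ or $St_{G_0}(G_1)\cap Z(G_0) \not\subseteq [G_0,G_0]$. I would then extract a nontrivial normal subcrossed module $A:A_1 \to A_0$ of $G$ with $A_1 \subseteq G_1^{G_0}$ and $A_0 \subseteq St_{G_0}(G_1)\cap Z(G_0)$ (so that $A$ is central) satisfying $A_1 \cap D_{G_0}(G_1) = 1$ and $A_0 \cap [G_0,G_0] = 1$. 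In the purely $0$-dimensional case one takes $A_1=1$ and $A_0=\langle a_0\rangle$ for a prime-order element $a_0 \in (St_{G_0}(G_1)\cap Z(G_0))\setminus [G_0,G_0]$; normality and centrality are immediate from $a_0 \in St_{G_0}(G_1)\cap Z(G_0)$, and $A_0\cap[G_0,G_0]=1$ since $\langle a_0\rangle$ has prime order. One then checks that $G/A \sim G$: because $A_0$ acts trivially on $G_1$ and $A_0\cap[G_0,G_0]=1$, passing to $G/A$ leaves $D_{G_0}(G_1)$ and $[G_0,G_0]$ (hence $[G,G]$) untouched and does not enlarge the center, so $(G/A)/Z(G/A)\cong G/Z(G)$ and the diagrams $(1)$, $(2)$ commute under the evident identifications.

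For finite crossed modules this yields the result by a minimality argument: the order $[m,n]$ strictly decreases at each reduction, so a representative of the family of least order must satisfy $Z(G)\subseteq[G,G]$ and is the desired stem crossed module; alternatively, in general, one invokes the crossed-module Schur multiplicator and stem-extension constructions of \cite{VC}, which furnish a maximal stem extension sharing the invariants $G/Z(G)$ and $[G,G]$ with $G$, exactly as \cite{TB} does in the group case.

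The main obstacle is the $1$-dimensional part of the reduction, where the boundary map couples the two levels: trimming a central generator $a_1\in G_1^{G_0}\setminus D_{G_0}(G_1)$ requires simultaneously controlling $d_G(a_1)$, since a subcrossed module must satisfy $d_G(A_1)\subseteq A_0$ while still meeting $A_0\cap[G_0,G_0]=1$ and leaving the commutator intact. Making $A$ a genuine normal subcrossed module, verifying $A\cap[G,G]=1$ in both degrees, and confirming that the quotient does not enlarge $Z(G)$ (so that the central quotient is preserved) are the delicate points; once $A$ is in hand, the commutativity of the isoclinism squares $(1)$ and $(2)$ is a routine diagram chase using the explicit formulas for $c_1,c_0,c_1'$ and $c_0'$.
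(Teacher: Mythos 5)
Your ``key reduction'' asserts that a non-stem crossed module always admits a nontrivial central subcrossed module $A$ with $A\cap[G,G]=1$ (in the $0$-dimensional case, generated by a prime-order element of $(St_{G_{0}}(G_{1})\cap Z(G_{0}))\setminus[G_{0},G_{0}]$). This is false. Let $M=\langle a,b\mid a^{8}=b^{2}=1,\ bab^{-1}=a^{5}\rangle$ be the modular group of order $16$ and consider the crossed module $M\overset{id}{\longrightarrow }M$ (or, for your ``purely $0$-dimensional case'', $1\longrightarrow M$). By Definitions \ref{09} and \ref{11} its center is $Z(M)\rightarrow Z(M)$ with $Z(M)=\langle a^{2}\rangle\cong C_{4}$, and its commutator is $[M,M]\rightarrow[M,M]$ with $[M,M]=\langle a^{4}\rangle\cong C_{2}$, so it is not stem; but the only prime-order element of $Z(M)$ is $a^{4}\in[M,M]$, hence every nontrivial subgroup of $Z(M)$ meets $[M,M]$ and no subcrossed module $A$ of the required kind exists. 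Your reduction therefore cannot start, and the minimality argument collapses with it: it shows only that a minimal-order representative admits no such reduction, not that it is stem. (The implication ``minimal order $\Rightarrow$ stem'' is Hall's theorem, whose standard proof presupposes the existence of stem groups in each family --- i.e., the very statement being proved --- so that route is circular as well.)

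The gap is not repaired by adding the other elementary move, Proposition \ref{05}: in the example above $Z(M)$ equals the Frattini subgroup $\Phi(M)=\langle a^{2}\rangle$, so $H_{1}Z(M)=M$ and $H_{0}Z(M)=M$ force $H_{1}=H_{0}=M$, and no proper subcrossed module $H$ satisfies $G=HZ(G)$. Thus neither move is available, although the isoclinism family of $M\overset{id}{\longrightarrow }M$ does contain the stem crossed module $D_{4}\overset{id}{\longrightarrow }D_{4}$ (the groups $M$ and $D_{4}$ are isoclinic, and identity crossed modules of isoclinic groups are isoclinic). Reaching a stem representative from this crossed module requires either first enlarging it (e.g., by a central product, after which Proposition \ref{05} does apply) or, more systematically, running the stem-extension and Schur-multiplicator construction of \cite{TB}, transported to crossed modules as in \cite{VC}: one realizes the commutator pairing of $G/Z(G)$ by a central extension whose kernel is forced to lie inside the commutator. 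That construction is in fact the entire content of the paper's own one-line proof, and it appears in your proposal only as an unexplained alternative; as written, your argument does not establish the proposition.
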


\begin{proof}
It can be proved by using the related constructions and definitions given in
\cite{TB,VC}.
\end{proof}

\begin{proposition}
\label{05} Let $G:G_{1}\overset{d}{\longrightarrow }G_{0}$ be a crossed
module and $H:H_{1}\overset{d|}{\longrightarrow }H_{0}$ be its subcrossed
module. If $G=HZ(G),$ i.e $G_{1}=H_{1}G_{1}^{G_{0}}$ and $%
G_{0}=H_{0}(St_{G_{0}}(G_{1})\cap Z(G_{0})),$ then $G$ is isoclinic to $H.$
\end{proposition}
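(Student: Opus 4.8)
The plan is to build the isoclinism directly from the inclusion $H\hookrightarrow G$, taking $(\xi_{1},\xi_{0})$ to be the identity and $(\eta_{1},\eta_{0})$ to be the isomorphism of central quotients coming from the decomposition $G=HZ(G)$. The two facts that make this work are that $[G,G]$ and $[H,H]$ literally coincide, and that $Z(H)=H\cap Z(G)$, so that the isomorphism theorem recalled above (applied with $K=Z(G)$) identifies $\overline{H}$ with $\overline{G}$.

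First I would establish $[G,G]=[H,H]$. Writing $C=St_{G_{0}}(G_{1})\cap Z(G_{0})$, so that $G_{0}=H_{0}C$ with $C$ central, every commutator of elements of $G_{0}$ reduces to one of their $H_{0}$-parts, giving $[G_{0},G_{0}]=[H_{0},H_{0}]$. For the top groups, take a generator ${}^{g_{0}}g_{1}g_{1}^{-1}$ of $D_{G_{0}}(G_{1})$ and write $g_{0}=h_{0}c$ with $c\in C$ and $g_{1}=h_{1}z$ with $z\in G_{1}^{G_{0}}$; since $c$ stabilises $G_{1}$ and $z$ is $G_{0}$-fixed, expanding the action gives ${}^{g_{0}}g_{1}g_{1}^{-1}={}^{h_{0}}h_{1}h_{1}^{-1}\in D_{H_{0}}(H_{1})$, and the reverse containment is immediate. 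As the boundary of $H$ is the restriction of $d$, the crossed modules $[G,G]$ and $[H,H]$ are equal, so $(\xi_{1},\xi_{0})=(\mathrm{id},\mathrm{id})$ is available.

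Next I would prove the center identity $Z(H)=H\cap Z(G)$ level by level. At the top, $H_{1}\cap G_{1}^{G_{0}}=H_{1}^{H_{0}}$ follows because $C$ fixes all of $G_{1}$, so being $H_{0}$-fixed is the same as being $G_{0}$-fixed for elements of $H_{1}$. At the base, $H_{0}\cap C=St_{H_{0}}(H_{1})\cap Z(H_{0})$: the inclusion $\subseteq$ is immediate, while $\supseteq$ uses $G_{1}=H_{1}G_{1}^{G_{0}}$ to extend the stabiliser condition and $G_{0}=H_{0}C$ with $C$ central to extend the centraliser condition. With $Z(H)=H\cap Z(G)$ in hand, the isomorphism theorem gives
\begin{equation*}
\overline{H}=\frac{H}{Z(H)}=\frac{H}{H\cap Z(G)}\cong\frac{HZ(G)}{Z(G)}=\frac{G}{Z(G)}=\overline{G},
\end{equation*}
and I take $(\eta_{1},\eta_{0})$ to be the inverse, sending $gZ(G)\mapsto hZ(H)$ whenever $g=hz$ with $h\in H$ and $z\in Z(G)$ (well defined since $h^{-1}h'\in H\cap Z(G)=Z(H)$ for any two such decompositions).

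Finally I would check the squares. For diagram $(1)$, the element $(g_{1}G_{1}^{G_{0}},g_{0}C)$ is sent by $\xi_{1}\circ c_{1}$ to ${}^{g_{0}}g_{1}g_{1}^{-1}$ and by $c_{1}'\circ(\eta_{1}\times\eta_{0})$ to ${}^{h_{0}}h_{1}h_{1}^{-1}$, and these coincide by the generator computation of the second step. For diagram $(2)$, both routes send $(\overline{g_{0}},\overline{g_{0}'})$ to a commutator, and $[g_{0},g_{0}']=[h_{0},h_{0}']$ because the $C$-parts are central and drop out. The main obstacle is the base-level center identity $H_{0}\cap C=St_{H_{0}}(H_{1})\cap Z(H_{0})$, whose $\supseteq$ direction must invoke both hypotheses $G_{1}=H_{1}G_{1}^{G_{0}}$ and $G_{0}=H_{0}C$ at once; once that is secured, the rest is a routine unwinding of the commutator and derivation formulas.
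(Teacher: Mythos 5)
Your proposal is correct and follows essentially the same route as the paper's own proof: both establish $Z(H)=H\cap Z(G)$ componentwise (using exactly the same arguments, with $C=St_{G_{0}}(G_{1})\cap Z(G_{0})$ stabilising $G_{1}$ and being central), apply the third isomorphism theorem to identify the central quotients, and show $D_{G_{0}}(G_{1})=D_{H_{0}}(H_{1})$ and $[G_{0},G_{0}]=[H_{0},H_{0}]$ by the same generator computations, taking identities together with the inclusion-induced isomorphism as the isoclinism data. The only differences are cosmetic: you orient the isoclinism from $G$ to $H$ (the paper takes $(\mathrm{inc.},\mathrm{inc.})$ and $(id_{G_{1}},id_{G_{0}})$, going from $H$ to $G$), and you explicitly verify the commutativity of diagrams $(1)$ and $(2)$, which the paper leaves implicit in those computations.
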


\begin{proof}
First, we will show that $H_{1}^{H_{0}}=H_{1}\cap G_{1}^{G_{0}}$ and $%
St_{H_{0}}(H_{1})\cap Z(H_{0})=H_{0}\cap (St_{G_{0}}(G_{1})\cap Z(G_{0})).$

Let $h_{1}\in H_{1}^{H_{0}}.$ For any $g_{0}\in G_{0},$ since $%
G_{0}=H_{0}(St_{G_{0}}(G_{1})\cap Z(G_{0}))$ there exist $a_{0}\in
St_{G_{0}}(G_{1})\cap Z(G_{0})$ and $h_{0}^{\prime }\in H_{0}$ such that $%
g_{0}=h_{0}^{\prime }a_{0}.$ We have $^{g_{0}}h_{1}=$ $^{(h_{0}^{\prime
}a_{0})}h_{1}=$ $^{h_{0}^{\prime }}(^{a_{0}}h_{1})=$ $^{h_{0}^{\prime
}}h_{1}=h_{1},$ so $h_{1}\in H_{1}\cap G_{1}^{G_{0}}.$ Conversely, for any $%
h_{1}\in H_{1}\cap G_{1}^{G_{0}},$ we have $h_{1}\in H_{1}^{H_{0}}.$ So, $%
H_{1}^{H_{0}}=H_{1}\cap G_{1}^{G_{0}}.$

Let $h_{0}\in St_{H_{0}}(H_{1})\cap Z(H_{0}).$ For any $g_{1}\in G_{1},$
there exist $k_{1}\in H_{1}$ and $a_{1}\in G_{1}^{G_{0}}$ such that $%
g_{1}=k_{1}a_{1}.$ Then%
\begin{equation*}
^{h_{0}}g_{1}=\text{ }^{h_{0}}(k_{1}a_{1})=\text{ }^{h_{0}}k_{1}\text{ }%
^{h_{0}}a_{1}=k_{1}a_{1},
\end{equation*}%
which means that $h_{0}\in St_{G_{0}}(G_{1}).$ On the other hand, it is
clear that $h_{0}\in Z(G_{0}).$ Then, we obtain $h_{0}\in H_{0}\cap
(St_{G_{0}}(G_{1})\cap Z(G_{0})).$ By a direct calculation we get $%
St_{H_{0}}(H_{1})\cap Z(H_{0})=H_{0}\cap (St_{G_{0}}(G_{1})\cap Z(G_{0})).$

By the third isomorphism theorem for crossed modules, we have%
\begin{equation*}
\begin{array}{lll}
\dfrac{H}{Z(H)} & = & \dfrac{(H_{1},H_{0},d|)}{%
(H_{1}^{H_{0}},St_{H_{0}}(H_{1})\cap Z(H_{0}),d|)} \\
& = & \dfrac{(H_{1},H_{0},d|)}{(H_{1}\cap G_{1}^{G_{0}},H_{0}\cap
(St_{G_{0}}(G_{1})\cap Z(G_{0})),d|)} \\
& = & \dfrac{(H_{1},H_{0},d|)}{(G_{1}^{G_{0}},St_{G_{0}}(G_{1})\cap
Z(G_{0}),d|)\cap (H_{1},H_{0},d|)} \\
& \cong  & \dfrac{(H_{1},H_{0},d|)(G_{1}^{G_{0}},St_{G_{0}}(G_{1})\cap
Z(G_{0}),d|)}{(G_{1}^{G_{0}},St_{G_{0}}(G_{1})\cap Z(G_{0}),d|)} \\
& = & \dfrac{HZ(G)}{Z(G)} \\
& = & \dfrac{G}{Z(G)},%
\end{array}%
\end{equation*}%
as required.

Let $^{g_{0}}g_{1}g_{1}^{-1}\in D_{G_{0}}(G_{1}),$ then there exist $%
h_{1}\in H_{1},$ $a_{1}\in G_{1}^{G_{0}},$ $h_{0}\in H_{0},$ $a_{0}\in
(St_{G_{0}}(G_{1})\cap Z(G_{0}))$ such that $g_{1}=h_{1}a_{1}$ and $%
g_{0}=h_{0}a_{0}.$ Since%
\begin{equation*}
\begin{array}{lll}
^{g_{0}}g_{1}g_{1}^{-1} & = & ^{(h_{0}a_{0})}(h_{1}a_{1})(h_{1}a_{1})^{-1}
\\
& = & ^{h_{0}a_{0}}(h_{1})^{h_{0}a_{0}}(a_{1})(h_{1}a_{1})^{-1} \\
& = & ^{h_{0}}(^{a_{0}}h_{1})^{h_{0}}(^{a_{0}}a_{1})a_{1}^{-1}h_{1}^{-1} \\
& = & (^{h_{0}}h_{1})(^{h_{0}}a_{1})a_{1}^{-1}h_{1}^{-1} \\
& = & (^{h_{0}}h_{1})a_{1}a_{1}^{-1}h_{1}^{-1} \\
& = & ^{h_{0}}h_{1}h_{1}^{-1},%
\end{array}%
\end{equation*}%
we have $^{g_{0}}g_{1}g_{1}^{-1}\in D_{H_{0}}(H_{1}).$ On the other hand,
for any $[g_{0},g_{0}^{\prime }]\in \lbrack G_{0},G_{0}]$ there exist $%
h_{0},h_{0}^{\prime }\in H_{0},$ $a_{0},a_{0}^{\prime }\in
(St_{G_{0}}(G_{1})\cap Z(G_{0}))$ such that $g_{0}=h_{0}a_{0},$ $%
g_{0}^{\prime }=h_{0}^{\prime }a_{0}^{\prime },$ from which we get $%
[g_{0},g_{0}^{\prime }]=[h_{0}a_{0},h_{0}^{\prime }a_{0}^{\prime
}]=[h_{0},h_{0}^{\prime }].$

Finally, we have that the crossed modules $G$ and $H$ are isoclinic where
the isomorphisms $(\eta _{1},\eta _{0})$ and $(\xi _{1},\xi _{0})$ are
defined by $(inc.,inc.)$, $(id_{G_{1}},id_{G_{0}})$, respectively.
\end{proof}

\begin{remark}
When $H:H_{1}\overset{d|}{\longrightarrow }H_{0}$ is finite crossed module
then the converse of Proposition \ref{05} is true.
\end{remark}

\begin{proposition}
Let $G:G_{1}\overset{d_{G}}{\longrightarrow }G_{0}$ and $H:H_{1}\overset{%
d_{H}}{\longrightarrow }H_{0}$ be isoclinic crossed mo\-dules.\newline
(i) If $G$ and $H$ are aspherical, then $G_{0}$ and $H_{0}$ are isoclinic
groups$.$\newline
(ii) If $G$ and $H$ are simply connected, then $G_{1}$ and $H_{1}$ are
isoclinic groups$.$
\end{proposition}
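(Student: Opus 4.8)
The plan is to read off both group isoclinisms directly from the crossed-module isoclinism data $((\eta _{1},\eta _{0}),(\xi _{1},\xi _{0})):G\sim H$ of Definition \ref{04}, after using Proposition \ref{10} to identify the crossed-module invariants appearing in diagrams $(1)$ and $(2)$ with the genuine group-theoretic central quotients and derived subgroups. The guiding observation is that asphericity isolates the relevant information in the "base" layer $G_{0}$, while simple connectivity isolates it in the "top" layer $G_{1}$.

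For (i), since $G$ and $H$ are aspherical, Proposition \ref{10}(ii) gives $St_{G_{0}}(G_{1})\cap Z(G_{0})=Z(G_{0})$ and $St_{H_{0}}(H_{1})\cap Z(H_{0})=Z(H_{0})$, so that $\overline{G_{0}}=G_{0}/Z(G_{0})$ and $\overline{H_{0}}=H_{0}/Z(H_{0})$. Under this identification $\eta _{0}$ becomes an isomorphism $G_{0}/Z(G_{0})\to H_{0}/Z(H_{0})$ and $\xi _{0}$ an isomorphism $[G_{0},G_{0}]\to [H_{0},H_{0}]$, while the maps $c_{0}$ and $c_{0}^{\prime }$ of diagram $(2)$, being $c_{0}(g_{0}Z(G_{0}),g_{0}^{\prime }Z(G_{0}))=[g_{0},g_{0}^{\prime }]$, are exactly the group commutator maps of $G_{0}$ and $H_{0}$. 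Thus the commutativity of $(2)$ is literally the defining commutative square of an isoclinism, so I would simply declare $(\eta _{0},\xi _{0}):G_{0}\sim H_{0}$. This part needs no further computation.

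For (ii), since $G$ and $H$ are simply connected, Proposition \ref{10}(i) gives $G_{1}^{G_{0}}=Z(G_{1})$ and $D_{G_{0}}(G_{1})=[G_{1},G_{1}]$ (and likewise for $H$), so $\eta _{1}$ becomes an isomorphism $G_{1}/Z(G_{1})\to H_{1}/Z(H_{1})$ and $\xi _{1}$ an isomorphism $[G_{1},G_{1}]\to [H_{1},H_{1}]$. The step that needs care — and what I expect to be the main obstacle — is that the top map $c_{1}$ of diagram $(1)$ is \emph{not} literally the group commutator map of $G_{1}$: it is built from the action of $\overline{G_{0}}$ on $\overline{G_{1}}$, so the two factors live in different groups. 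I would bridge this gap using the second crossed-module axiom together with surjectivity of $d$ (simple connectivity): for $g_{1},g_{1}^{\prime }\in G_{1}$ axiom (2) gives $^{d(g_{1})}g_{1}^{\prime }=g_{1}g_{1}^{\prime }g_{1}^{-1}$, whence the genuine commutator satisfies $[g_{1},g_{1}^{\prime }]={}^{d(g_{1})}g_{1}^{\prime }(g_{1}^{\prime })^{-1}=c_{1}(g_{1}^{\prime }Z(G_{1}),\overline{d_{G}}(g_{1}Z(G_{1})))$. In other words the commutator map $c_{G_{1}}$ of $G_{1}$ factors as $c_{G_{1}}=c_{1}\circ \tau $, where $\tau (\bar{x},\bar{y})=(\bar{y},\overline{d_{G}}(\bar{x}))$, and the analogous factorization holds for $H$; here $\overline{d_{G}}$ is well defined on $\overline{G_{1}}$ precisely because $Z(G)$ is a subcrossed module, so $d$ carries $G_{1}^{G_{0}}$ into $St_{G_{0}}(G_{1})\cap Z(G_{0})$.

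With this identity the remaining verification is a short diagram chase. Using commutativity of $(1)$, i.e. $\xi _{1}c_{1}=c_{1}^{\prime }(\eta _{1}\times \eta _{0})$, and the fact that $(\eta _{1},\eta _{0})$ is a crossed-module morphism, so $\eta _{0}\overline{d_{G}}=\overline{d_{H}}\eta _{1}$, I compute $\xi _{1}c_{G_{1}}(\bar{x},\bar{y})=\xi _{1}c_{1}(\bar{y},\overline{d_{G}}\bar{x})=c_{1}^{\prime }(\eta _{1}\bar{y},\overline{d_{H}}\eta _{1}\bar{x})=c_{H_{1}}(\eta _{1}\bar{x},\eta _{1}\bar{y})$, which is exactly commutativity of the group isoclinism square for $G_{1}$ and $H_{1}$. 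Hence $(\eta _{1},\xi _{1}):G_{1}\sim H_{1}$. I expect the only delicate bookkeeping to be the argument-swap hidden in $\tau $ and the well-definedness of $\overline{d_{G}}$, both of which are routine once the factorization $c_{G_{1}}=c_{1}\circ \tau $ is in hand.
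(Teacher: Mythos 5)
Your proposal is correct and follows essentially the same route as the paper: both parts use Proposition \ref{10} to identify $\overline{G_{0}}$ with $G_{0}/Z(G_{0})$ in the aspherical case (resp.\ $\overline{G_{1}}$ and $D_{G_{0}}(G_{1})$ with $G_{1}/Z(G_{1})$ and $[G_{1},G_{1}]$ in the simply connected case), and then read the group isoclinisms directly off the crossed-module isoclinism data $(\eta_{1},\eta_{0})$, $(\xi_{1},\xi_{0})$. The one place you go beyond the paper is in part (ii): the paper merely asserts that $\eta_{1}$ and $\xi_{1}$ make $G_{1}$ and $H_{1}$ isoclinic, whereas you correctly isolate the point that $c_{1}$ is not literally the commutator map of $G_{1}$ (its second argument lives in $\overline{G_{0}}$), and your factorization $c_{G_{1}}=c_{1}\circ\tau$ via the second crossed-module axiom, combined with $\eta_{0}\overline{d_{G}}=\overline{d_{H}}\eta_{1}$ and the commutativity of diagram (1), is exactly the verification needed to obtain the group isoclinism square; this fills in a step the paper leaves implicit.
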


\begin{proof}
Let $G:G_{1}\overset{d_{G}}{\longrightarrow }G_{0}$ and $H:H_{1}\overset{%
d_{H}}{\longrightarrow }H_{0}$ be isoclinic crossed modules. Then we have
the crossed module isomorphisms%
\begin{eqnarray*}
(\eta _{1},\eta _{0}) &:&(\overline{G_{1}}\overset{\overline{d_{G}}}{%
\longrightarrow }\overline{G_{0}})\longrightarrow (\overline{H_{1}}\overset{%
\overline{d_{H}}}{\longrightarrow }\overline{H_{0}}) \\
(\xi _{1},\xi _{0}) &:&(D_{G_{0}}(G_{1})\overset{d_{G}|}{\longrightarrow }%
[G_{0},G_{0}])\longrightarrow (D_{H_{0}}(H_{1})\overset{d_{H}|}{%
\longrightarrow }[H_{0},H_{0}])
\end{eqnarray*}%
which makes diagrams (1) and (2) commutative.

(i) From asphericallity of the crossed modules, we have $Z(G_{0})\subseteq
St_{G_{0}}(G_{1}),$ $Z(H_{0})\subseteq St_{H_{0}}(H_{1}).$ Consequently, $%
\eta _{0}$ is an isomorphism between $G_{0}/Z(G_{0})$ and $H_{0}/Z(H_{0}).$
So the isomorphisms $\eta _{0}$ and $\xi _{0}$ give rise to an isoclinism
from $G_{0}$ to $H_{0}.$

(ii) Since $G$ and $H$ are simply connected crossed modules, we have $%
G_{1}^{G_{0}}=Z(G_{1}),$ $H_{1}^{H_{0}}=Z(H_{1}),$ $%
D_{G_{0}}(G_{1})=[G_{1},G_{1}]$ and $D_{H_{0}}(H_{1})=[H_{1},H_{1}].$ So we
have the isomorphisms $\eta _{1}:G_{1}/Z(G_{1})\longrightarrow
H_{1}/Z(H_{1}),$ $\xi _{1}:[G_{1},G_{1}]\longrightarrow \lbrack H_{1},H_{1}]$
which make $G_{1}$ and $H_{1}$ isoclinic.
\end{proof}

\begin{proposition}
Let $G$ and $H$ be isoclinic finite crossed modules. Then $G_{1}$ and $G_{0}$
are isoclinic to $H_{1}$ and $H_{0},$ respectively.
\end{proposition}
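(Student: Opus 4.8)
The plan is to read off, from the two commuting squares $(1)$ and $(2)$ of Definition \ref{04}, the isomorphisms of central quotients and derived subgroups that witness $G_{0}\sim H_{0}$ and $G_{1}\sim H_{1}$ as groups, together with their commutator compatibility. For brevity write $K_{G}=St_{G_{0}}(G_{1})\cap Z(G_{0})$ and $K_{H}=St_{H_{0}}(H_{1})\cap Z(H_{0})$, so that $\overline{G_{0}}=G_{0}/K_{G}$ and $\overline{H_{0}}=H_{0}/K_{H}$. The guiding idea is that an isomorphism of crossed modules which intertwines the commutator pairings must send radicals to radicals, and I would exploit this twice.

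First I would handle $G_{0}\sim H_{0}$ using diagram $(2)$ alone. Since $K_{G}\subseteq Z(G_{0})$, the map $c_{0}$ is defined on $\overline{G_{0}}\times\overline{G_{0}}$ by $(g_{0}K_{G},g_{0}'K_{G})\mapsto[g_{0},g_{0}']$, and its radical $\{g_{0}K_{G}:[g_{0},g_{0}']=1\text{ for all }g_{0}'\}$ is exactly $Z(G_{0})/K_{G}$. Because $\xi_{0}$ is an isomorphism and $(2)$ commutes, $\eta_{0}$ carries the radical of $c_{0}$ onto that of $c_{0}'$, i.e. $\eta_{0}(Z(G_{0})/K_{G})=Z(H_{0})/K_{H}$; hence $\eta_{0}$ descends to an isomorphism $\bar\eta_{0}:G_{0}/Z(G_{0})\to H_{0}/Z(H_{0})$. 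As $c_{0}$ is just the group commutator map of $G_{0}$ precomposed with the projection $\overline{G_{0}}\to G_{0}/Z(G_{0})$, the commutativity of $(2)$ transports verbatim to the isoclinism square of $G_{0},H_{0}$, so $(\bar\eta_{0},\xi_{0}):G_{0}\sim H_{0}$.

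The substantive step is $G_{1}\sim H_{1}$, where I must pass from the action-theoretic data $\overline{G_{1}}=G_{1}/G_{1}^{G_{0}}$, $D_{G_{0}}(G_{1})$ and $St_{G_{0}}(G_{1})$ to the intrinsic invariants $Z(G_{1})$ and $[G_{1},G_{1}]$. The bridge is crossed-module axiom $2$: for $g_{1},g_{1}'\in G_{1}$ one has ${}^{d(g_{1})}g_{1}'=g_{1}g_{1}'g_{1}^{-1}$, whence $[g_{1},g_{1}']={}^{d(g_{1})}g_{1}'g_{1}'^{-1}=c_{1}(g_{1}'G_{1}^{G_{0}},d(g_{1})K_{G})$. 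From this I would read off $G_{1}^{G_{0}}\subseteq Z(G_{1})$ and $[G_{1},G_{1}]\subseteq D_{G_{0}}(G_{1})$, and also that $g_{1}\in Z(G_{1})$ iff $d(g_{1})\in St_{G_{0}}(G_{1})$, i.e. $Z(G_{1})/G_{1}^{G_{0}}=(\overline{d_{G}})^{-1}\big(St_{G_{0}}(G_{1})/K_{G}\big)$. Now $St_{G_{0}}(G_{1})/K_{G}$ is precisely the right radical of $c_{1}$, so the same radical argument gives $\eta_{0}\big(St_{G_{0}}(G_{1})/K_{G}\big)=St_{H_{0}}(H_{1})/K_{H}$; combining this with the morphism identity $\eta_{0}\,\overline{d_{G}}=\overline{d_{H}}\,\eta_{1}$ yields $\eta_{1}\big(Z(G_{1})/G_{1}^{G_{0}}\big)=Z(H_{1})/H_{1}^{H_{0}}$, so $\eta_{1}$ descends to $\bar\eta_{1}:G_{1}/Z(G_{1})\to H_{1}/Z(H_{1})$. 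Finally, applying the commutativity of $(1)$ and $\eta_{0}\,\overline{d_{G}}=\overline{d_{H}}\,\eta_{1}$ to the identity $[g_{1},g_{1}']=c_{1}(g_{1}'G_{1}^{G_{0}},d(g_{1})K_{G})$, I would compute $\xi_{1}([g_{1},g_{1}'])=[h_{1},h_{1}']$ whenever $\eta_{1}(g_{1}G_{1}^{G_{0}})=h_{1}H_{1}^{H_{0}}$ and $\eta_{1}(g_{1}'G_{1}^{G_{0}})=h_{1}'H_{1}^{H_{0}}$; this shows that $\xi_{1}$ restricts to an isomorphism $[G_{1},G_{1}]\cong[H_{1},H_{1}]$ and simultaneously delivers the commutator compatibility of $(\bar\eta_{1},\xi_{1})$, so $G_{1}\sim H_{1}$.

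I expect the main obstacle to be this last paragraph: one must check that $[h_{1},h_{1}']$ does not depend on the coset representatives produced by $\eta_{1}$, for which one needs both $G_{1}^{G_{0}}\subseteq Z(G_{1})$ and $d(G_{1}^{G_{0}})\subseteq K_{G}$ (the latter deduced from axioms $1$ and $2$), and that $\xi_{1}$ really restricts to $[G_{1},G_{1}]$ rather than to some larger subgroup of $D_{G_{0}}(G_{1})$, with surjectivity onto $[H_{1},H_{1}]$ coming from surjectivity of $\eta_{1}$. The structural identities above hold for arbitrary crossed modules; finiteness is invoked only to place these restricted homomorphisms between finite groups, which is the setting relevant to the GAP computations.
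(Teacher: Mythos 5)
Your proof is correct. It shares the paper's skeleton---take $\xi_{0}$ itself as the derived-subgroup isomorphism for $G_{0}\sim H_{0}$, restrict $\xi_{1}$ to $[G_{1},G_{1}]$ for $G_{1}\sim H_{1}$, and descend $\eta_{0},\eta_{1}$ to the central quotients---but the mechanisms you use for the two nontrivial steps are genuinely different from, and stronger than, the paper's. Where the paper simply asserts that $\eta_{1},\eta_{0}$ induce isomorphisms $G_{1}/Z(G_{1})\rightarrow H_{1}/Z(H_{1})$ and $G_{0}/Z(G_{0})\rightarrow H_{0}/Z(H_{0})$, and justifies that $\xi_{1}|:[G_{1},G_{1}]\rightarrow[H_{1},H_{1}]$ is an isomorphism ``by the finiteness of $G_{1}$ and $H_{1}$'', you supply actual arguments: the radical computation for the pairings $c_{0},c_{1}$ (valid because the $\xi_{i}$ are injective and the $\eta_{i}$ surjective) shows that $\eta_{0}$ carries $Z(G_{0})/K_{G}$ and $St_{G_{0}}(G_{1})/K_{G}$ onto their $H$-counterparts, writing $K_{G}=St_{G_{0}}(G_{1})\cap Z(G_{0})$ as you do; the Peiffer axiom, through the identity $[g_{1},g_{1}^{\prime}]=c_{1}(g_{1}^{\prime}G_{1}^{G_{0}},d_{G}(g_{1})K_{G})$ and its consequence $Z(G_{1})=d_{G}^{-1}(St_{G_{0}}(G_{1}))$, transfers this information to the $G_{1}$-level; and surjectivity of $\eta_{1}$, rather than any cardinality count, yields $\xi_{1}([G_{1},G_{1}])=[H_{1},H_{1}]$. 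The payoff of your route is that finiteness is never used, so you in fact prove the proposition for arbitrary isoclinic crossed modules, whereas the paper's appeal to finiteness (left unexplained there) confines its argument to the finite case. The caveats you flag at the end---independence of the choice of coset representatives, $d_{G}(G_{1}^{G_{0}})\subseteq K_{G}$, and that $\xi_{1}([G_{1},G_{1}])$ is generated by images of commutators---are exactly the right ones, and each checks out from the crossed-module axioms.
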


\begin{proof}
Let $G:G_{1}\overset{d_{G}}{\longrightarrow }G_{0}$ and $H:H_{1}\overset{%
d_{H}}{\longrightarrow }H_{0}$ be isoclinic crossed module. Then we have the
crossed module isomorphisms%
\begin{eqnarray*}
(\eta _{1},\eta _{0}) &:&(\overline{G_{1}}\overset{\overline{d_{G}}}{%
\longrightarrow }\overline{G_{0}})\longrightarrow (\overline{H_{1}}\overset{%
\overline{d_{H}}}{\longrightarrow }\overline{H_{0}}) \\
(\xi _{1},\xi _{0}) &:&(D_{G_{0}}(G_{1})\overset{d_{G}|}{\longrightarrow }%
[G_{0},G_{0}])\longrightarrow (D_{H_{0}}(H_{1})\overset{d_{H}|}{%
\longrightarrow }[H_{0},H_{0}])
\end{eqnarray*}%
which makes diagrams (1) and (2) commutative. The isomorphism $\xi
_{1}:D_{G_{0}}G_{1}\longrightarrow D_{H_{0}}H_{1}$ gives rise to the
restriction $\xi _{1}|:[G_{1},G_{1}]\longrightarrow \lbrack H_{1},H_{1}]$
which is also an isomorphism by the finiteness of $G_{1}$ and $H_{1}$.
Similarly, we have the isomorphisms $\eta _{1}^{\prime
}:G_{1}/Z(G_{1})\longrightarrow H_{1}/Z(H_{1}),\eta _{1}^{\prime
}(g_{1}Z(G_{1}))=h_{1}Z(H_{1})$, $\eta _{0}^{\prime
}:G_{0}/Z(G_{0})\longrightarrow H_{0}/Z(H_{0}),\eta _{0}^{\prime
}(g_{0}Z(G_{0}))=h_{0}Z(H_{0})$, and $\xi _{0}$ which makes $G_{1},$ $G_{0}$
isoclinic to $H_{1}$, $H_{0},$ respectively.
\end{proof}

\begin{remark}
In general, the finiteness of a crossed module $G:G_{1}\overset{d}{%
\longrightarrow }G_{0}$ does not give the equation $%
[G_{1},G_{1}]=D_{G_{0}}(G_{1}).$ For the crossed module $C_{8}\overset{d}{%
\longrightarrow }C_{2},$ we have $[C_{8},C_{8}]=\{e\}$ and $%
D_{C_{2}}(C_{8})=C_{4}$. So $[C_{8},C_{8}]\neq D_{C_{2}}(C_{8}).$
\end{remark}

As indicated in \cite{MH}, the isoclinism of two groups doesn't give rise to
the isomorphism of their automorphism groups.

\begin{example}
\label{enver} Despite the fact that $C_{32}$ isoclinic to $Kl(4)$, $%
Aut(C_{32})$ isn't isomorphic to $Aut(Kl_{4})$. Since, $\left\vert
Aut(C_{32})\right\vert =16\neq 6=\left\vert Aut(Kl(4))\right\vert .$ But, in
\cite{YA}, it is shown that, class preserving \ automorphism groups of
isoclinic groups are isomorphic.
\end{example}

In the crossed module case, we obtain the same results. For this, first we
introduce the class preserving actor of a crossed module as follows;

\begin{proposition}
\label{06a} Let $G:G_{1}\longrightarrow G_{0}$ be a crossed module, $D_{%
\mathcal{C}}(G_{0},G_{1})=\{\delta \in D(G_{0},G_{1})$ $|$ there exists $%
g_{1}\in G_{1}$ such that $\delta (g_{0})=g_{1}$ $^{g_{0}}g_{1}^{-1},$ for
all $g_{0}\in G_{0}\}$ and $Aut_{_{\mathcal{C}}}(G)=\{(\alpha ,\beta )\in
Aut(G)$ $|$ there exists $g_{0}\in G_{0}$ such that $\alpha (g_{1})=$ $%
^{g_{0}}g_{1},$ $\beta (g_{0}^{\prime })=g_{0}g_{0}^{\prime }g_{0}^{-1},$
for all $g_{1}\in G_{1},$ $g_{0}^{\prime }\in G_{0}\}$. Then, we have the
following:\newline
(a) $D_{\mathcal{C}}(G_{0},G_{1})$ is a subgroup of $D(G_{0},G_{1}).$\newline
(b) $Aut_{_{\mathcal{C}}}(G)$ is a subgroup of $Aut(G).$
\end{proposition}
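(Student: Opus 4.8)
The plan is to identify both sets with the images of the two components of the canonical crossed-module morphism $(\eta ,\gamma ):G\longrightarrow Act(G)$ recalled above, and then to invoke the elementary fact that the image of a group homomorphism is a subgroup. In other words, the real content of the proposition is the two identifications $D_{\mathcal{C}}(G_{0},G_{1})=\func{Im}(\eta )$ and $Aut_{\mathcal{C}}(G)=\func{Im}(\gamma )$; once these are made, both assertions are immediate.

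For part (a) I would first note that, by the very definition of $\eta :G_{1}\longrightarrow D(G_{0},G_{1})$, an element $\delta \in D(G_{0},G_{1})$ belongs to $D_{\mathcal{C}}(G_{0},G_{1})$ if and only if $\delta =\eta _{g_{1}}$ for some $g_{1}\in G_{1}$, where $\eta _{g_{1}}(g_{0})=g_{1}\,{}^{g_{0}}g_{1}^{-1}$. Hence $D_{\mathcal{C}}(G_{0},G_{1})=\func{Im}(\eta )$. Since $(\eta ,\gamma )$ is a morphism of crossed modules, its first component $\eta$ is a group homomorphism, so its image is a subgroup of $D(G_{0},G_{1})$, which is exactly assertion (a).

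For part (b) I would argue identically with the second component: an automorphism $(\alpha ,\beta )\in Aut(G)$ lies in $Aut_{\mathcal{C}}(G)$ precisely when $(\alpha ,\beta )=\gamma (g_{0})=(\alpha _{g_{0}},\phi _{g_{0}})$ for some $g_{0}\in G_{0}$, so that $Aut_{\mathcal{C}}(G)=\func{Im}(\gamma )$. As $\gamma :G_{0}\longrightarrow Aut(G)$ is again a group homomorphism (the base component of the canonical morphism), its image is a subgroup of $Aut(G)$, giving (b).

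I do not expect a genuine obstacle here. Should one prefer a self-contained verification that avoids quoting the canonical morphism, the only point needing care is closure, namely $\eta _{g_{1}}\circ \eta _{g_{1}^{\prime }}=\eta _{g_{1}g_{1}^{\prime }}$ under the Whitehead multiplication $(\partial _{1}\circ \partial _{2})(g_{0})=\partial _{1}(d\partial _{2}(g_{0})g_{0})\partial _{2}(g_{0})$, together with $\gamma (g_{0})\gamma (g_{0}^{\prime })=\gamma (g_{0}g_{0}^{\prime })$ under componentwise composition; the identity ($\eta _{e}=0$ and $\gamma (e)=(id_{G_{1}},id_{G_{0}})$) and the inverses ($\eta _{g_{1}}^{-1}=\eta _{g_{1}^{-1}}$ and $\gamma (g_{0})^{-1}=\gamma (g_{0}^{-1})$) are then immediate. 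These closure computations are, however, nothing but the assertions that $\eta$ and $\gamma$ are homomorphisms, which the canonical morphism already supplies, so the image argument is the most economical route.
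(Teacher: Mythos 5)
Your proposal is correct, but it takes a genuinely different route from the paper. You identify $D_{\mathcal{C}}(G_{0},G_{1})=\func{Im}(\eta )$ and $Aut_{\mathcal{C}}(G)=\func{Im}(\gamma )$ for the canonical morphism $(\eta ,\gamma ):G\longrightarrow Act(G)$ recalled in the Preliminaries, so that $Act_{\mathcal{C}}(G)$ is nothing but the inner actor $InnAct(G)$, and then invoke the fact that the image of a group homomorphism is a subgroup (equivalently, that the image of a crossed module morphism is a subcrossed module, which the paper itself records when defining $\func{Im}(\alpha ,\beta )$). The paper instead proves the statement by hand in Appendix A: for (a) it computes, using the Whitehead multiplication and the crossed module axioms, that $(\delta \delta ^{\prime })(g_{0})=(g_{1}g_{1}^{\prime })\,^{g_{0}}(g_{1}g_{1}^{\prime })^{-1}$ and that $\delta ^{-1}(g_{0})=g_{1}^{-1}\,^{g_{0}}g_{1}$ inverts $\delta $, i.e.\ precisely that $\eta _{g_{1}}\eta _{g_{1}^{\prime }}=\eta _{g_{1}g_{1}^{\prime }}$ and $\eta _{g_{1}}^{-1}=\eta _{g_{1}^{-1}}$; for (b) it does the analogous computation for composition and inversion of the pairs $(\alpha _{g_{0}},\phi _{g_{0}})$. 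So the mathematical content of the two arguments is identical: the paper's computation is exactly a self-contained verification that $\eta $ and $\gamma $ are homomorphisms landing in $D(G_{0},G_{1})$ and $Aut(G)$, which your argument quotes from Norrie's result instead of reproving. Your route buys brevity and a conceptual explanation of what $Act_{\mathcal{C}}(G)$ is; the paper's buys independence from the cited literature. One point worth making explicit in your write-up: the definition of $D_{\mathcal{C}}$ and $Aut_{\mathcal{C}}$ must be read with the quantifiers in the order ``there exists $g_{1}$ (resp.\ $g_{0}$) working for \emph{all} arguments'' --- this is what makes the identification with $\func{Im}(\eta )$ and $\func{Im}(\gamma )$ valid, and it is also the reading used in the paper's own Appendix A proof, so your interpretation is the intended one.
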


\begin{proof}
The proof is given in Appendix A.
\end{proof}

\begin{proposition}
\label{06} $Act_{\mathcal{C}}(G):D_{\mathcal{C}}(G_{0},G_{1})\overset{\Delta
_{_{\mathcal{C}}}}{\longrightarrow }Aut_{_{\mathcal{C}}}(G)$ is a crossed
module with the action induced from the action of $Aut(G)$ over $%
D(G_{0},G_{1})$ such that%
\begin{equation*}
\begin{array}{lll}
Aut_{_{\mathcal{C}}}(G)\times D_{\mathcal{C}}(G_{0},G_{1}) & \longrightarrow
& D_{\mathcal{C}}(G_{0},G_{1}) \\
((\alpha ,\beta ),\delta ) & \longmapsto & ^{(\alpha ,\beta )}\delta%
\end{array}%
\end{equation*}%
$^{(\alpha ,\beta )}\delta (h_{0})=\alpha (g_{1})^{h_{0}}\alpha (g_{1})^{-1}$%
, for all $h_{0}\in G_{0}$ and $g_{1}\in G_{1}.$
\end{proposition}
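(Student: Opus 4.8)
The plan is to recognize $Act_{\mathcal{C}}(G)$ as nothing but the inner actor $InnAct(G)$ and then invoke the fact, recorded in the Definition of the image, that the image of a morphism of crossed modules is a subcrossed module of its target. First I would observe that the two ``class preserving'' objects are precisely the images of the canonical morphism $(\eta,\gamma):G\to Act(G)$. Indeed, $D_{\mathcal{C}}(G_0,G_1)$ consists by definition exactly of the derivations $\eta_{g_1}\colon g_0\mapsto g_1\,{}^{g_0}g_1^{-1}$, so $D_{\mathcal{C}}(G_0,G_1)=\func{Im}\eta$; likewise $Aut_{\mathcal{C}}(G)$ consists exactly of the automorphisms $\gamma(g_0)=(\alpha_{g_0},\phi_{g_0})$, so $Aut_{\mathcal{C}}(G)=\func{Im}\gamma$. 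Hence $Act_{\mathcal{C}}(G)=\func{Im}(\eta,\gamma)=InnAct(G)$, and this identification also re-proves parts (a) and (b) of Proposition \ref{06a}, since an image is a subgroup.

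Granting this, the boundary and the action require only two closure checks, after which the crossed module axioms are inherited from the ambient crossed module $Act(G)$. For the boundary I would use that $(\eta,\gamma)$ is a morphism of crossed modules, whence $\Delta\circ\eta=\gamma\circ d$; thus for $\delta=\eta_{g_1}\in D_{\mathcal{C}}(G_0,G_1)$ one gets $\Delta(\delta)=\gamma(d(g_1))\in Aut_{\mathcal{C}}(G)$, so $\Delta$ restricts to a map $\Delta_{\mathcal{C}}\colon D_{\mathcal{C}}(G_0,G_1)\to Aut_{\mathcal{C}}(G)$. For the action I would take $(\alpha,\beta)\in Aut_{\mathcal{C}}(G)$ and $\delta=\eta_{g_1}$ and compute the induced action $^{(\alpha,\beta)}\delta=\alpha\delta\beta^{-1}$ pointwise: writing $h_0=\beta(k_0)$ and applying the equivariance $\alpha(^{k_0}g_1)={}^{\beta(k_0)}\alpha(g_1)$ gives $(^{(\alpha,\beta)}\delta)(h_0)=\alpha(g_1)\,{}^{h_0}\alpha(g_1)^{-1}$. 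This is exactly the formula in the statement, and being of the shape $\eta_{\alpha(g_1)}$ it lies again in $D_{\mathcal{C}}(G_0,G_1)$; so the action of $Aut(G)$ on $D(G_0,G_1)$ restricts to an action of $Aut_{\mathcal{C}}(G)$ on $D_{\mathcal{C}}(G_0,G_1)$, and the displayed action formula is confirmed simultaneously.

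Since $\Delta_{\mathcal{C}}$ and the restricted action are just the restrictions of the crossed module data of $Act(G)$ to a pair of subgroups closed under them, the two crossed module axioms of the Definition hold automatically; equivalently, $Act_{\mathcal{C}}(G)=InnAct(G)$ is a subcrossed module of $Act(G)$, hence itself a crossed module. The only genuine content is the two closure statements above, so that is where I would be most careful. In particular I expect the pointwise action computation, which hinges entirely on the equivariance $\alpha(^{k_0}g_1)={}^{\beta(k_0)}\alpha(g_1)$, to be the most error-prone step (a slip in the placement of the action superscript or of the inverse), so I would write that calculation out in full rather than leaving it to ``direct verification.''
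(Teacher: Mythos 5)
Your proof is correct, but it follows a genuinely different route from the paper's. The paper disposes of Proposition \ref{06} with the single sentence ``it can be shown by a direct calculation''; the only computations actually written out (in Appendix A, under the heading ``Proof of Proposition \ref{06}'', though they in fact establish Proposition \ref{06a}) are the element-by-element subgroup closure checks, and no structural identification is ever made. You instead observe that $D_{\mathcal{C}}(G_{0},G_{1})=\func{Im}\eta$ and $Aut_{_{\mathcal{C}}}(G)=\func{Im}\gamma$, so that $Act_{\mathcal{C}}(G)$ is literally the inner actor $InnAct(G)=\func{Im}(\eta ,\gamma )$, and you then inherit the crossed module structure from the paper's stated fact that the image of a morphism of crossed modules is a subcrossed module of its target. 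This is legitimate: both ingredients you invoke --- that $(\eta ,\gamma ):G\longrightarrow Act(G)$ is a morphism of crossed modules, and that images are subcrossed modules --- appear in the paper's preliminaries (following Norrie), and your two closure computations are exactly right: the boundary restricts because $\Delta \circ \eta =\gamma \circ d$ gives $\Delta (\eta _{g_{1}})=\gamma (d(g_{1}))\in \func{Im}\gamma $, and the action restricts because the equivariance $\alpha ({}^{k_{0}}g_{1})={}^{\beta (k_{0})}\alpha (g_{1})$ yields $({}^{(\alpha ,\beta )}\eta _{g_{1}})(h_{0})=\alpha (g_{1})\,{}^{h_{0}}\alpha (g_{1})^{-1}$, i.e.\ ${}^{(\alpha ,\beta )}\eta _{g_{1}}=\eta _{\alpha (g_{1})}$, which is precisely the displayed formula; note your computation is in fact valid for every $(\alpha ,\beta )\in Aut(G)$, not only class preserving ones. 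What your approach buys is conceptual economy: the identification $Act_{\mathcal{C}}(G)=InnAct(G)$ makes Proposition \ref{06a}, the crossed module axioms, and the action formula all corollaries of one observation, whereas the paper's (implicit) approach verifies the axioms by hand. What it costs is reliance on two general facts the paper states but does not prove; a fully self-contained write-up would still require the pointwise calculation you correctly single out as the one step worth writing in full.
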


\begin{proof}
It can be shown by a direct calculation.
\end{proof}

\begin{definition}
Let $G:G_{1}\overset{d_{G}}{\longrightarrow }G_{0}$ be a crossed module. The
crossed module%
\begin{equation*}
Act_{\mathcal{C}}(G):D_{\mathcal{C}}(G_{0},G_{1})\overset{\Delta _{_{%
\mathcal{C}}}}{\longrightarrow }Aut_{_{\mathcal{C}}}(G)
\end{equation*}%
define in Proposition \ref{06} will be called \textit{class preserving actor}
of $G$ and will be denoted by $Act_{\mathcal{C}}(G).$
\end{definition}

\begin{theorem}
Let $G:G_{1}\overset{d_{G}}{\longrightarrow }G_{0}$ and $H:H_{1}\overset{%
d_{H}}{\longrightarrow }H_{0}$ be isoclinic crossed modules. Then, we have $%
D_{\mathcal{C}}(G_{0},G_{1})\cong D_{\mathcal{C}}(H_{0},H_{1}).$
\end{theorem}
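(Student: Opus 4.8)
The plan is to identify each class-preserving Whitehead group $D_{\mathcal{C}}(G_0,G_1)$ with the central quotient $\overline{G_1}=G_1/G_1^{G_0}$, and then simply transport the resulting isomorphism across the given isoclinism.

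First I would reuse the canonical morphism $(\eta,\gamma):G\to Act(G)$ recalled above. Its first component is the group homomorphism $\eta:G_1\to D(G_0,G_1)$, $g_1\mapsto\eta_{g_1}$, with $\eta_{g_1}(g_0)=g_1\,{}^{g_0}g_1^{-1}$. By the very definition of $D_{\mathcal{C}}(G_0,G_1)$, its elements are exactly the derivations of this form, so $D_{\mathcal{C}}(G_0,G_1)=\func{Im}\eta$; in particular this re-proves that it is a subgroup. The kernel of $\eta$ consists of those $g_1$ with $g_1\,{}^{g_0}g_1^{-1}=1$ for every $g_0\in G_0$, i.e.\ with ${}^{g_0}g_1=g_1$ for all $g_0$, which is precisely the fixed-point subgroup $G_1^{G_0}$. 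The first isomorphism theorem then gives
\begin{equation*}
D_{\mathcal{C}}(G_0,G_1)\cong G_1/G_1^{G_0}=\overline{G_1},
\end{equation*}
and by the identical argument $D_{\mathcal{C}}(H_0,H_1)\cong H_1/H_1^{H_0}=\overline{H_1}$.

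Next I would invoke the isoclinism $((\eta_1,\eta_0),(\xi_1,\xi_0)):G\sim H$. By Definition \ref{04} the pair $(\eta_1,\eta_0):(\overline{G_1}\to\overline{G_0})\to(\overline{H_1}\to\overline{H_0})$ is an isomorphism of crossed modules, so in particular its first component $\eta_1:\overline{G_1}\to\overline{H_1}$ is a group isomorphism. Composing the three isomorphisms yields
\begin{equation*}
D_{\mathcal{C}}(G_0,G_1)\cong\overline{G_1}\cong\overline{H_1}\cong D_{\mathcal{C}}(H_0,H_1),
\end{equation*}
which is the desired conclusion.

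The only step that asks for genuine care is the assertion that $\eta$ respects the Whitehead multiplication $\partial_1\circ\partial_2$; but this is exactly the statement that $(\eta,\gamma)$ is a morphism of crossed modules, so it can be cited directly from the construction of $Act(G)$ and needs no fresh computation. (Even were $\eta$ merely an anti-homomorphism under some convention, $G_1/\ker\eta$ would still be isomorphic to its image via inversion, so the conclusion is unaffected.) I do not anticipate a substantive obstacle here: the notion of isoclinism was set up precisely so that $\overline{G_1}$ is carried isomorphically onto $\overline{H_1}$ by $\eta_1$, and the content of the theorem is just the recognition that $D_{\mathcal{C}}(G_0,G_1)$ is a faithful copy of $\overline{G_1}$.
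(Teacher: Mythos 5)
Your proof is correct, and it takes a genuinely different and considerably shorter route than the paper's. The paper never identifies $D_{\mathcal{C}}(G_{0},G_{1})$ with $\overline{G_{1}}$; instead it builds an explicit map $\phi :\delta _{G}\mapsto \delta _{H}$ by choosing coset representatives (with a piecewise definition splitting $H_{0}$ into $St_{H_{0}}(H_{1})\cap Z(H_{0})$ and its complement), and then verifies in three separate steps that $\delta _{H}$ is well defined, that it lies in $D_{\mathcal{C}}(H_{0},H_{1})$, and that $\phi $ is an isomorphism; this uses not only $\eta _{0}$ and $\eta _{1}$ but also $\xi _{1}$ together with the commutativity of diagram (1). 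Your argument packages all of that into the single structural observation that the first component $\eta $ of the canonical morphism $G\longrightarrow Act(G)$ is a group homomorphism whose image is exactly $D_{\mathcal{C}}(G_{0},G_{1})$ and whose kernel is exactly $G_{1}^{G_{0}}$, so that $D_{\mathcal{C}}(G_{0},G_{1})\cong \overline{G_{1}}$ --- the crossed-module analogue of $Inn(M)\cong M/Z(M)$ --- after which only the isomorphism $\eta _{1}:\overline{G_{1}}\longrightarrow \overline{H_{1}}$ is needed. This buys brevity (no case analysis, no well-definedness check) and a strictly sharper statement: the conclusion follows from $\overline{G_{1}}\cong \overline{H_{1}}$ alone, with no appeal to $\xi _{1}$ or to the compatibility diagrams, and it isolates the reusable fact $D_{\mathcal{C}}(G_{0},G_{1})\cong \overline{G_{1}}$. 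What the paper's hands-on construction buys in exchange is an isomorphism transported explicitly along the isoclinism data, which is the form best suited to the subsequent results, where the isomorphisms of $D_{\mathcal{C}}$ and $Aut_{\mathcal{C}}$ are meant to assemble into an isomorphism of class preserving actors $Act_{\mathcal{C}}(G)\cong Act_{\mathcal{C}}(H)$ compatible with the boundary maps and actions.
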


\begin{proof}
Suppose that $G$ and $H$ are isoclinic crossed modules. Then we have the
isomorphisms%
\begin{eqnarray*}
(\eta _{1},\eta _{0}) &:&(\overline{G_{1}}\overset{\overline{d_{G}}}{%
\longrightarrow }\overline{G_{0}})\longrightarrow (\overline{H_{1}}\overset{%
\overline{d_{H}}}{\longrightarrow }\overline{H_{0}}) \\
(\xi _{1},\xi _{0}) &:&(D_{G_{0}}(G_{1})\overset{d_{G}|}{\longrightarrow }%
[G_{0},G_{0}])\longrightarrow (D_{H_{0}}(H_{1})\overset{d_{H}|}{%
\longrightarrow }[H_{0},H_{0}])
\end{eqnarray*}%
which makes the diagrams (1) and (2) commutative. Let $\delta _{G}\in D_{%
\mathcal{C}}(G_{0},G_{1})$, $h_{0}\in H_{0}-(St_{H_{0}}(H_{1})\cap Z(H_{0}))$
and $\overline{h_{0}}=h_{0}St_{H_{0}}(H_{1})\cap Z(H_{0})\in \overline{H_{0}}%
.$ Define $\overline{g_{0}}=g_{0}St_{G_{0}}(G_{1})\cap Z(G_{0})=\eta
_{0}^{-1}(\overline{h_{0}})\in \overline{G_{0}}.$ Since $g_{0}\in G_{0},$
there exists an element $a_{1}\in G_{1}$ such that $\delta _{G}(g_{0})=a_{1}$
$^{g_{0}}a_{1}^{-1}.$

Let $\eta _{1}(\overline{a_{1}})=\overline{a_{1}^{\prime }}.$ Now we define
a map $\delta _{H}:H_{0}\longrightarrow H_{1}$ by%
\begin{equation*}
\delta _{H}(h_{o})=\left\{
\begin{array}{lllc}
a_{1}^{\prime }\text{ }^{h_{0}}(a_{1}^{\prime })^{-1} &  & h_{0}\in
H_{0}-(St_{H_{0}}(H_{1})\cap Z(H_{0})) &  \\
e_{H_{1}} &  & h_{0}\in St_{H_{0}}(H_{1})\cap Z(H_{0}). &
\end{array}%
\right.
\end{equation*}%
We will give the proof step-by-step.

\textbf{Step 1 }$\delta _{H}$ is well defined.

Proof: Let $h_{0},h_{0}^{\prime }\in H_{0}-(St_{H_{0}}(H_{1})\cap Z(H_{0})).$
Then $\overline{h_{0}},$ $\overline{h_{0}^{\prime }}\in \overline{H_{0}}$
and $\overline{g_{0}}=\eta _{0}^{-1}(\overline{h_{0}})\in \overline{G_{0}},$
$\overline{g_{0}^{\prime }}=\eta _{0}^{-1}(\overline{h_{0}^{\prime }})\in
\overline{G_{0}}.$ So $g_{0},$ $g_{0}^{\prime }\in G_{0}$ and there exist
elements $a_{1},$ $b_{1}\in G_{1}$ such that $\delta _{G}(g_{0})=a_{1}$ $%
^{g_{0}}a_{1}^{-1}$ and $\delta _{G}(g_{0}^{\prime })=b_{1}$ $%
^{g_{0}^{\prime }}b_{1}^{-1}.$ Now we will show that if $h_{0}=h_{0}^{\prime
},$ then $a_{1}^{\prime }$ $^{h_{0}}(a_{1}^{\prime })^{-1}=b_{1}^{\prime }$ $%
^{h_{0}^{\prime }}(b_{1}^{\prime })^{-1}$ where $\overline{a_{1}^{\prime }}%
=\eta _{1}(\overline{a_{1}}),$ $\overline{b_{1}^{\prime }}=\eta _{1}(%
\overline{b_{1}})$.

Let $h_{0}=h_{0}^{\prime }.$ Then $\overline{g_{0}}=\overline{g_{0}^{\prime }%
}$ which means $(g_{0}^{\prime })^{-1}g_{0}\in St_{G_{0}}(G_{1})\cap
Z(G_{0}).$ Say, $(g_{0}^{\prime })^{-1}g_{0}=g$ where $g\in
St_{G_{0}}(G_{1})\cap Z(G_{0}).$ We have%
\begin{eqnarray*}
\delta _{G}(g_{0}) &=&\delta _{G}(g_{0}^{\prime }g) \\
&=&\delta _{G}(g_{0}^{\prime })^{g_{0}^{\prime }}\delta _{G}(g)\text{ \ }%
(\because \delta _{G}\in D(G_{0},G_{1})) \\
&=&\delta _{G}(g_{0}^{\prime })^{g_{0}^{\prime }}e_{G_{1}}\text{ \ \ \ \ \ }%
(\because g\in St_{G_{0}}(G_{1})\cap Z(G_{0})) \\
&=&\delta _{G}(g_{0}^{\prime }).
\end{eqnarray*}%
So, $a_{1}$ $^{g_{0}}a_{1}^{-1}=b_{1}$ $^{g_{0}^{\prime }}b_{1}^{-1}.$ By
applying $\xi _{1},$ we get $a_{1}^{\prime }$ $^{h_{0}}(a_{1}^{\prime
})^{-1}=b_{1}^{\prime }$ $^{h_{0}^{\prime }}(b_{1}^{\prime })^{-1}$ which
shows the well definition of $\delta _{H}.$

\textbf{Step 2 }$\delta _{H}\in D_{\mathcal{C}}(H_{0},H_{1}).$

Proof: Let $h_{0},h_{0}^{\prime }\in H_{0}.$ We must check that $\delta
_{H}(h_{0}h_{0}^{\prime })=\delta _{H}(h_{0})(^{h_{0}}\delta
_{H}(h_{o}^{\prime })).$

(i) If $h_{0},h_{0}^{\prime }\in St_{H_{0}}(H_{1})\cap Z(H_{0}),$ then $%
h_{0}h_{0}^{\prime }\in St_{H_{0}}(H_{1})\cap Z(H_{0}).$ So, $\delta
_{H}(h_{0}h_{0}^{\prime })=e_{H_{1}}$ and $\delta _{H}(h_{0})(^{h_{0}}\delta
_{H}(h_{0}^{\prime }))=e_{H_{1}}$ $^{h_{0}}e_{H_{1}}=e_{H_{1}}.$ That is, $%
\delta _{H}(h_{0}h_{0}^{\prime })=\delta _{H}(h_{0})(^{h_{0}}\delta
_{H}(h_{0}^{\prime })).$

(ii) If $h_{0}\in H_{0}-(St_{H_{0}}(H_{1})\cap Z(H_{0}))$ and $h_{0}^{\prime
}\in St_{H_{0}}(H_{1})\cap Z(H_{0}),$ then $h_{0}h_{0}^{\prime }\in
H_{0}-(St_{H_{0}}(H_{1})\cap Z(H_{0})).$ Since $\delta
_{H}(h_{0})=a_{1}^{\prime }$ $^{h_{0}}(a_{1}^{\prime })^{-1}$ and $\delta
_{H}(h_{0}^{\prime })=e_{H_{1}},$ we have%
\begin{eqnarray*}
\delta _{H}(h_{0}h_{0}^{\prime }) &=&a_{1}^{\prime }\text{ }%
^{h_{0}h_{0}^{\prime }}(a_{1}^{\prime })^{-1} \\
&=&a_{1}^{\prime }\text{ }^{h_{0}h_{0}^{\prime }}(^{(h_{0}^{\prime
})^{-1}}(a_{1}^{\prime })^{-1}) \\
&=&a_{1}^{\prime }\text{ }^{(h_{0}h_{0}^{\prime }(h_{0}^{\prime
})^{-1})}(a_{1}^{\prime })^{-1} \\
&=&a_{1}^{\prime }\text{ }^{h_{0}}(a_{1}^{\prime })^{-1}e_{H_{1}} \\
&=&(a_{1}^{\prime }\text{ }^{h_{0}}(a_{1}^{\prime })^{-1})^{h_{0}}e_{H_{1}}
\\
&=&\delta _{H}(h_{0})(^{h_{0}}\delta _{H}(h_{0}^{\prime })).
\end{eqnarray*}

(iii) If $h_{0},h_{0}^{\prime }\in H_{0}-(St_{H_{0}}(H_{1})\cap Z(H_{0})),$
then $h_{0}h_{0}^{\prime }\in H_{0}-(St_{H_{0}}(H_{1})\cap Z(H_{0}))$ and $%
\eta _{0}^{-1}(\overline{h_{0}h_{0}^{\prime }})=\eta _{0}^{-1}(\overline{%
h_{0}})\eta _{0}^{-1}(\overline{h_{0}^{\prime }})=\overline{g_{0}}\overline{%
g_{0}^{\prime }}=\overline{g_{0}g_{0}^{\prime }},$ where $g_{0},$ $%
g_{0}^{\prime }\in G_{0}.$ Let $\delta _{G}(g_{0}g_{0}^{\prime })=a_{1}b_{1}$
$^{g_{0}g_{0}^{\prime }}(a_{1}b_{1})^{-1},$ $\delta _{G}(g_{0})=a_{1}$ $%
^{g_{0}}a_{1}^{-1}$ and $\delta _{G}(g_{0}^{\prime })=b_{1}$ $%
^{g_{0}^{\prime }}b_{1}^{-1}.$ Since $\delta _{G}(g_{0}g_{0}^{\prime
})=\delta _{G}(g_{0})^{g_{0}}\delta _{G}(g_{0}^{\prime }),$ we get%
\begin{eqnarray*}
a_{1}b_{1}\text{ }^{g_{0}g_{0}^{\prime }}(a_{1}b_{1})^{-1} &=&(a_{1}\text{ }%
^{g_{0}}a_{1}^{-1})\text{ }^{g_{0}}(b_{1}\text{ }^{g_{0}^{\prime
}}b_{1}^{-1}) \\
&=&(a_{1}\text{ }^{g_{0}}a_{1}^{-1})(^{g_{0}}b_{1}\text{ }%
^{g_{0}g_{0}^{\prime }}b_{1}^{-1}) \\
&=&(a_{1}\text{ }^{g_{0}}a_{1}^{-1})(^{g_{0}}b_{1}\text{ }(b_{1}^{-1}b_{1})%
\text{\ }^{g_{0}g_{0}^{\prime }}b_{1}^{-1}) \\
&=&(a_{1}\text{ }^{g_{0}}a_{1}^{-1})(^{g_{0}}b_{1}b_{1}^{-1})(b_{1}\text{ }%
^{g_{0}g_{0}^{\prime }}b_{1}^{-1}).
\end{eqnarray*}%
By applying $\xi _{1},$ we get%
\begin{eqnarray*}
a_{1}^{\prime }b_{1}^{\prime }\text{ }^{h_{0}h_{0}^{\prime }}(a_{1}^{\prime
}b_{1}^{\prime })^{-1} &=&(a_{1}^{\prime }\text{ }^{h_{0}}(a_{1}^{\prime
})^{-1})(^{h_{0}}b_{1}^{\prime }(b_{1}^{\prime })^{-1})(b_{1}^{\prime }\text{
}^{h_{0}h_{0}^{\prime }}(b_{1}^{\prime })^{-1}) \\
&=&(a_{1}^{\prime }\text{ }^{h_{0}}(a_{1}^{\prime
})^{-1})(^{h_{0}}b_{1}^{\prime }\text{ }^{h_{0}h_{0}^{\prime
}}(b_{1}^{\prime })^{-1}) \\
&=&(a_{1}^{\prime }\text{ }^{h_{0}}(a_{1}^{\prime
})^{-1})^{h_{0}}(b_{1}^{\prime }\text{ }^{h_{0}^{\prime }}(b_{1}^{\prime
})^{-1}).
\end{eqnarray*}%
That is, $\delta _{H}(h_{0}h_{0}^{\prime })=\delta _{H}(h_{0})^{h_{0}}\delta
_{H}(h_{o}^{\prime }).$\newline
From definition of $\delta _{H}$, we obtain $\delta _{H}\in D_{\mathcal{C}%
}(H_{0},H_{1}).$

\textbf{Step 3 }The map%
\begin{equation*}
\begin{array}{clll}
\phi : & D_{\mathcal{C}}(G_{0},G_{1}) & \longrightarrow & D_{\mathcal{C}%
}(H_{0},H_{1}) \\
& \delta _{G} & \longmapsto & \delta _{H}%
\end{array}%
\end{equation*}%
is an isomorphism.

Proof: Let $\delta _{G},$ $\delta _{G}^{\prime }\in D_{\mathcal{C}%
}(G_{0},G_{1}),$ $h_{0}\in H_{0}$ and $\overline{g_{0}}=\eta _{0}^{-1}(%
\overline{h_{0}}).$ Since $\delta _{G}\delta _{G}^{\prime }\in D_{\mathcal{C}%
}(G_{0},G_{1}),$ there exists $a\in G_{1}$ such that $(\delta _{G}\delta
_{G}^{\prime })(g_{0})=a$ $^{g_{0}}a^{-1}.$ Also there exist $a_{1},b_{1}\in
G_{1}$ such that $\delta _{G}(g_{0})=a_{1}$ $^{g_{0}}a_{1}^{-1},$ $\delta
_{G}^{\prime }(g_{0})=b_{1}$ $^{g_{0}}b_{1}^{-1}.$ Since $(\delta _{G}\delta
_{G}^{\prime })(g_{0})=a_{1}b_{1}$ $^{g_{0}}(a_{1}b_{1})^{-1}$, we get $a$ $%
^{g_{0}}a^{-1}=a_{1}b_{1}$ $^{g_{0}}(a_{1}b_{1})^{-1}.$

By applying $\xi _{1},$ we get%
\begin{equation*}
a^{\prime }\text{ }^{h_{0}}(a^{\prime })^{-1}=a_{1}^{\prime }b_{1}^{\prime }%
\text{ }^{h_{0}}(a_{1}^{\prime }b_{1}^{\prime })^{-1},
\end{equation*}%
since $\eta _{1}(\overline{a_{1}b_{1}})=\eta _{1}(\overline{a_{1}})\eta _{1}(%
\overline{b_{1}})=\overline{a_{1}^{\prime }}\overline{b_{1}^{\prime }}=%
\overline{a_{1}^{\prime }b_{1}^{\prime }}.$ Finally, since%
\begin{equation*}
\phi _{\delta _{G}\delta _{G}^{\prime }}(h_{0})=a^{\prime }\text{ }%
^{h_{0}}(a^{\prime })^{-1}
\end{equation*}%
and%
\begin{equation*}
\phi _{\delta _{G}}\phi _{\delta _{G}^{\prime }}(h_{0})=a_{1}^{\prime
}b_{1}^{\prime }\text{ }^{h_{0}}(a_{1}^{\prime }b_{1}^{\prime })^{-1},\text{
for all }h_{0}\in H_{0},
\end{equation*}%
we have $\phi _{\delta _{G}\delta _{G}^{\prime }}=\phi _{\delta _{G}}\phi
_{\delta _{G}^{\prime }},$ i.e $\phi $ is a homomorphism.

Similarly, for each $\delta _{H}\in D_{\mathcal{C}}(H_{0},H_{1}),$ we can
define $\delta _{G}\in D_{\mathcal{C}}(G_{0},G_{1})$ and the homomorphism $%
\varphi :D_{\mathcal{C}}(H_{0},H_{1})\rightarrow D_{\mathcal{C}%
}(G_{0},G_{1}),$ $\varphi (\delta _{H})=\delta _{G}$ as follows;

Let $\delta _{H}\in D_{\mathcal{C}}(H_{0},H_{1})$ and $g_{0}\in
G_{0}-(St_{G_{0}}(G_{1})\cap Z(G_{0})).$ Define $\overline{h_{0}}%
=h_{0}(St_{H_{0}}(H_{1})\cap Z(H_{0}))=\eta _{0}(\overline{g_{0}})\in
\overline{H_{0}}.$ So there exists an element $a_{1}^{\prime }\in H_{1}$
such that $\delta _{H}(h_{0})=a_{1}^{\prime }$ $^{h_{0}}(a_{1}^{\prime
})^{-1}.$

Let $\eta _{1}^{-1}(\overline{a_{1}^{\prime }})=\overline{a_{1}}.$ Now we
define the map $\delta _{G}:G_{0}\longrightarrow G_{1}$ by%
\begin{equation*}
\delta _{G}(g_{o})=\left\{
\begin{array}{llll}
a_{1}\text{ }^{g_{0}}(a_{1})^{-1} &  & g_{0}\in G_{0}-(St_{G_{0}}(G_{1})\cap
Z(H_{0})) &  \\
e_{G_{1}} &  & g_{0}\in St_{G_{0}}(G_{1})\cap Z(G_{0}) &
\end{array}%
\right.
\end{equation*}%
Clearly, $\phi \varphi =id_{D_{\mathcal{C}}(H_{0},H_{1})}$ and $\varphi \phi
=id_{D_{\mathcal{C}}(G_{0},G_{1})}.$ Thus the homomorphism $\phi $ is an
isomorphism.
\end{proof}

\begin{proposition}
If $G$ and $H$ are finite crossed modules, then $Aut_{_{\mathcal{C}%
}}(G)\cong Aut_{_{\mathcal{C}}}(H).$
\end{proposition}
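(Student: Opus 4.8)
The plan is to identify $Aut_{\mathcal{C}}(G)$ intrinsically as a quotient of $G_{0}$ and then transport that identification across the isoclinism. Recall from the canonical morphism $(\eta ,\gamma ):G\longrightarrow Act(G)$ that $\gamma :G_{0}\longrightarrow Aut(G)$ sends $g_{0}$ to $(\alpha _{g_{0}},\phi _{g_{0}})$ with $\alpha _{g_{0}}(g_{1})={}^{g_{0}}g_{1}$ and $\phi _{g_{0}}(g_{0}^{\prime })=g_{0}g_{0}^{\prime }g_{0}^{-1}$. By the very definition of $Aut_{\mathcal{C}}(G)$ in Proposition \ref{06a}, its elements are exactly the pairs of this form, so $Aut_{\mathcal{C}}(G)=\func{Im}\gamma$; that is, $Aut_{\mathcal{C}}(G)$ is precisely the base component of the inner actor $InnAct(G)$.

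First I would check that $\gamma$ is a group homomorphism, which is the routine computation $\alpha _{g_{0}g_{0}^{\prime }}=\alpha _{g_{0}}\alpha _{g_{0}^{\prime }}$ and $\phi _{g_{0}g_{0}^{\prime }}=\phi _{g_{0}}\phi _{g_{0}^{\prime }}$, both immediate from the definitions. Next I would compute $\ker \gamma $: one has $\gamma (g_{0})=(id_{G_{1}},id_{G_{0}})$ if and only if ${}^{g_{0}}g_{1}=g_{1}$ for all $g_{1}\in G_{1}$ and $g_{0}g_{0}^{\prime }g_{0}^{-1}=g_{0}^{\prime }$ for all $g_{0}^{\prime }\in G_{0}$, i.e. if and only if $g_{0}\in St_{G_{0}}(G_{1})\cap Z(G_{0})$. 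Hence the first isomorphism theorem gives
\begin{equation*}
Aut_{\mathcal{C}}(G)\cong G_{0}/(St_{G_{0}}(G_{1})\cap Z(G_{0}))=\overline{G_{0}},
\end{equation*}
and the identical argument applied to $H$ yields $Aut_{\mathcal{C}}(H)\cong \overline{H_{0}}$.

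To finish, I would invoke the isoclinism $((\eta _{1},\eta _{0}),(\xi _{1},\xi _{0})):G\sim H$ of Definition \ref{04}, whose first component supplies a group isomorphism $\eta _{0}:\overline{G_{0}}\longrightarrow \overline{H_{0}}$. Composing the three isomorphisms gives
\begin{equation*}
Aut_{\mathcal{C}}(G)\cong \overline{G_{0}}\overset{\eta _{0}}{\cong }\overline{H_{0}}\cong Aut_{\mathcal{C}}(H),
\end{equation*}
as required. Concretely, the composite sends $\gamma (g_{0})\in Aut_{\mathcal{C}}(G)$ to $\gamma ^{\prime }(h_{0})\in Aut_{\mathcal{C}}(H)$ for any representative $h_{0}$ with $\eta _{0}(\overline{g_{0}})=\overline{h_{0}}$; this mirrors the explicit construction used for $D_{\mathcal{C}}$ in the preceding theorem, but the quotient description makes well-definedness automatic.

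I expect the only genuine work to lie in pinning down the identification $Aut_{\mathcal{C}}(G)=\func{Im}\gamma \cong \overline{G_{0}}$; everything after that is formal. I would also remark that this argument does not in fact use the finiteness hypothesis: finiteness is presumably stated to align with the group-theoretic analogue recalled in Example \ref{enver} and with the hypotheses of the neighbouring results, but the isomorphism $Aut_{\mathcal{C}}(G)\cong \overline{G_{0}}$ holds for an arbitrary crossed module once $Aut_{\mathcal{C}}(G)$ is recognised as the base of the inner actor.
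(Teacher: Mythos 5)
Your argument takes a genuinely different route from the paper's, whose ``proof'' is only a pointer to the method of Theorem 4.1 of \cite{YA}, i.e.\ to an explicit element-by-element transport of automorphisms across the isoclinism in the style of the paper's preceding theorem on $D_{\mathcal{C}}$. You instead identify $Aut_{\mathcal{C}}(G)$ intrinsically: reading Proposition \ref{06a} literally (a single $g_{0}$ witnessing both coordinates of $(\alpha ,\beta )$), one has $Aut_{\mathcal{C}}(G)=\func{Im}\gamma $, the base group of $InnAct(G)$; since $\gamma $ is a homomorphism with kernel $St_{G_{0}}(G_{1})\cap Z(G_{0})$, the first isomorphism theorem gives $Aut_{\mathcal{C}}(G)\cong \overline{G_{0}}$, and composing with the isomorphism $\eta _{0}$ supplied by the isoclinism finishes the proof. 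Under that reading your proof is correct, considerably more economical than a Yadav-style construction, and, as you observe, makes the finiteness hypothesis superfluous; the same shortcut would even dispose of the preceding theorem, since $D_{\mathcal{C}}(G_{0},G_{1})=\func{Im}\eta \cong \overline{G_{1}}$ under the same reading. This single-witness reading is also the one used in the paper's own Appendix A proof of Proposition \ref{06a}, so your interpretation is defensible.

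The substantive caveat is that your entire proof hinges on the quantifier order in Proposition \ref{06a}, which the paper leaves ambiguous, and the contextual evidence points the other way: the name ``class preserving'', the citation of \cite{YA}, the claim that the crossed module case recovers ``the same results'', and the fact that finiteness is assumed here but not in the $D_{\mathcal{C}}$ theorem all suggest the intended notion is Yadav's, in which the conjugating element is allowed to depend on the argument. Under that reading your opening claim that the elements of $Aut_{\mathcal{C}}(G)$ ``are exactly'' the pairs $\gamma (g_{0})$ is false: already for groups, viewed as crossed modules $M\overset{id}{\longrightarrow }M$, the pointwise definition yields the full class-preserving automorphism group $Aut_{c}(M)$, and there exist finite $p$-groups (going back to Burnside) whose class-preserving automorphisms are not all inner. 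Then $Aut_{\mathcal{C}}(G)$ can strictly contain $\func{Im}\gamma $, the identification with $\overline{G_{0}}$ breaks down at your first step, and the finiteness hypothesis together with the explicit transport argument becomes genuinely necessary. So your proof stands only relative to the literal definition; you should say so explicitly, because the result the authors intend to generalize is nontrivial precisely because class-preserving automorphisms need not be inner.
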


\begin{proof}
It can be easily checked by a similar way of Theorem 4.1 in \cite{YA}.
\end{proof}

\begin{corollary}
Let $G:G_{1}\overset{d_{G}}{\longrightarrow }G_{0}$ and $H:H_{1}\overset{%
d_{H}}{\longrightarrow }H_{0}$ be two finite isoclinic crossed modules. Then
$Act_{\mathcal{C}}(G)\cong Act_{\mathcal{C}}(H).$
\end{corollary}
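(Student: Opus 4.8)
The plan is to exhibit an explicit isomorphism of crossed modules $(\phi,\psi):Act_{\mathcal{C}}(G)\longrightarrow Act_{\mathcal{C}}(H)$ by assembling the two component isomorphisms already produced. For the object level I take $\psi:Aut_{_{\mathcal{C}}}(G)\longrightarrow Aut_{_{\mathcal{C}}}(H)$ from the preceding Proposition, and for the arrow level I take the isomorphism $\phi:D_{\mathcal{C}}(G_{0},G_{1})\longrightarrow D_{\mathcal{C}}(H_{0},H_{1})$, $\phi(\delta_{G})=\delta_{H}$, constructed in the Theorem above. Since a morphism of crossed modules is a pair of group homomorphisms commuting with the boundary maps and compatible with the actions, and since each of $\phi$, $\psi$ is already a group isomorphism, it remains only to verify these two compatibility conditions; the pair $(\phi,\psi)$ is then automatically an isomorphism in $\mathbf{XMod}$.

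The decisive observation is that $\phi$ and $\psi$ are built from one and the same isoclinism datum $((\eta_{1},\eta_{0}),(\xi_{1},\xi_{0})):G\sim H$, by the same recipe: lift an element of $\overline{H_{0}}$ back along $\eta_{0}^{-1}$, read off a witness in $G_{1}$, and transport it to $H_{1}$ via $\eta_{1}$ (and via $\xi_{1}$ on commutators). I would verify (a) compatibility with the boundary maps, $\psi\circ\Delta_{_{\mathcal{C}}}^{G}=\Delta_{_{\mathcal{C}}}^{H}\circ\phi$, and (b) equivariance of the actions, $\phi(^{(\alpha,\beta)}\delta)={}^{\psi(\alpha,\beta)}\phi(\delta)$ for all $(\alpha,\beta)\in Aut_{_{\mathcal{C}}}(G)$ and $\delta\in D_{\mathcal{C}}(G_{0},G_{1})$. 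For (a) one evaluates both sides on an arbitrary $\delta_{G}$ with $\delta_{G}(g_{0})=a_{1}\,{}^{g_{0}}a_{1}^{-1}$: the left side is $\psi$ applied to $\Delta_{_{\mathcal{C}}}^{G}(\delta_{G})=(\delta_{G}d_{G},d_{G}\delta_{G})$, while the right side is $\Delta_{_{\mathcal{C}}}^{H}(\delta_{H})=(\delta_{H}d_{H},d_{H}\delta_{H})$ with $\delta_{H}$ the derivation from Steps 1--2 of the Theorem. Because $\delta_{H}$ is defined through the very lifts $\eta_{0}^{-1}$ and transport $\eta_{1}$ that also define $\psi$, unwinding both sides reduces the required identity to the commutativity of diagrams $(1)$ and $(2)$ applied to $a_{1}$ and its image $a_{1}'$ satisfying $\eta_{1}(\overline{a_{1}})=\overline{a_{1}'}$. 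Condition (b) follows by the same bookkeeping, since the action ${}^{(\alpha,\beta)}\delta(h_{0})=\alpha(g_{1})\,{}^{h_{0}}\alpha(g_{1})^{-1}$ of Proposition \ref{06} is transported by $\eta_{1}$ identically on both sides.

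The main obstacle is step (a): one must genuinely unwind the boundary $\Delta_{_{\mathcal{C}}}(\delta)=(\delta d,d\delta)$ on each crossed module and confirm, element by element, that $\psi$ sends the class-preserving automorphism attached to $\delta_{G}$ to the one attached to $\delta_{H}$. The finiteness hypothesis enters exactly here, where the preceding Proposition supplies $\psi$ and where $\xi_{1}$ restricts to an isomorphism $[G_{1},G_{1}]\longrightarrow[H_{1},H_{1}]$. Once (a) and (b) are established, the pair $(\phi,\psi)$ is a morphism of crossed modules whose components are isomorphisms, and therefore $Act_{\mathcal{C}}(G)\cong Act_{\mathcal{C}}(H)$.
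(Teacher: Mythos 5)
Your proposal is correct and follows essentially the same route as the paper: the paper states this corollary with no further argument, as the immediate assembly of the preceding Theorem ($D_{\mathcal{C}}(G_{0},G_{1})\cong D_{\mathcal{C}}(H_{0},H_{1})$) and Proposition ($Aut_{_{\mathcal{C}}}(G)\cong Aut_{_{\mathcal{C}}}(H)$), which is exactly your pair $(\phi ,\psi )$. Your additional verification that $(\phi ,\psi )$ commutes with the boundary maps $\Delta _{_{\mathcal{C}}}$ and is compatible with the actions is a detail the paper leaves implicit, and it goes through as you indicate, using that $(\eta _{1},\eta _{0})$ is itself a morphism of crossed modules.
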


\begin{example}
Let $M$ $=Kl_{4}$ and $N=C_{32}.$ The crossed modules $M\overset{id}{%
\longrightarrow }M$ and $N\overset{id}{\longrightarrow }N$ are isoclinic but
their actors $(Aut(M),Aut(M),\Delta )\ncong (Aut(N),Aut(N),\Delta ).$ On the
other hand, their class preserving actors $(Aut_{\mathcal{C}}(M),Aut_{%
\mathcal{C}}(M),\Delta |)$ and $(Aut_{\mathcal{C}}(N),Aut_{\mathcal{C}%
}(N),\Delta |)$ are isomorphic. ( See Example \ref{enver}, for details. )
\end{example}

In group theory, if $M$ and $N$ are isoclinic groups, then $M$ is nilpotent
(solvable) if and only if $N$ is nilpotent (solvable), and they have the
same nilpotency class (derived length). On the other hand, for crossed
modules, if $G:G_{1}\overset{d}{\longrightarrow }G_{0}$ is nilpotent
(solvable) then all subcrossed modules and all quotient crossed modules of $%
G $ are nilpotent (solvable). Also, if $G/Z(G)$ is nilpotent (solvable),
then $G$ is nilpotent (solvable). So, we have the following result:

\begin{proposition}
Let $G:G_{1}\overset{d_{G}}{\longrightarrow }G_{0}$ and $H:H_{1}\overset{%
d_{H}}{\longrightarrow }H_{0}$ be two isoclinic crossed modules.
\end{proposition}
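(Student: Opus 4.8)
The plan is to reduce everything to the central-quotient isomorphism supplied by the isoclinism, using the structural facts recorded immediately above the statement. Recall that an isoclinism $((\eta_1,\eta_0),(\xi_1,\xi_0)):G\sim H$ provides in particular an isomorphism of crossed modules $(\eta_1,\eta_0):G/Z(G)\longrightarrow H/Z(H)$, and that (i) nilpotency and solvability are inherited by quotient crossed modules, while (ii) if $G/Z(G)$ is nilpotent (solvable) then $G$ is nilpotent (solvable).

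First I would treat nilpotency. Assume $G$ is nilpotent. Since $G/Z(G)$ is a quotient of $G$, fact (i) makes $G/Z(G)$ nilpotent; the isomorphism $(\eta_1,\eta_0)$ then makes $H/Z(H)$ nilpotent, whence $H$ is nilpotent by fact (ii). Because isoclinism is an equivalence relation it is symmetric, so interchanging the roles of $G$ and $H$ gives the reverse implication, establishing that $G$ is nilpotent if and only if $H$ is. The solvable case is proved word for word the same way, replacing \emph{nilpotent} by \emph{solvable} throughout.

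If the statement also asserts equality of the nilpotency class (derived length), the plan is to exploit the second isoclinism isomorphism $(\xi_1,\xi_0):[G,G]\longrightarrow[H,H]$ together with the commutativity of diagrams $(1)$ and $(2)$. I would prove by induction on $n\ge 2$ that $(\xi_1,\xi_0)$ restricts to an isomorphism $\Gamma_n(G)\longrightarrow\Gamma_n(H)$; the base case $n=2$ is simply $\Gamma_2=[G,G]$, the domain of $(\xi_1,\xi_0)$. For the inductive step one writes $\Gamma_{n+1}(G)=[\Gamma_n(G),G]$ in terms of its generating displacements ${}^{g_0}g_1 g_1^{-1}$ and commutators $[g_0,g_0']$, and rewrites each generator through diagrams $(1)$ and $(2)$ as the corresponding generator of $\Gamma_{n+1}(H)$, using that the diagrams together with the homomorphism property of $(\eta_1,\eta_0)$ make the natural projection $[G,G]\to G/Z(G)$ intertwine $(\xi_1,\xi_0)$ and $(\eta_1,\eta_0)$. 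Since $(\xi_1,\xi_0)$ is an isomorphism, $\Gamma_{c+1}(G)=1$ if and only if $\Gamma_{c+1}(H)=1$, giving equal nilpotency classes; the derived series $G^{n}=[G^{n-1},G^{n-1}]$ is handled identically.

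The main obstacle is this inductive step. One must unwind the definition of the commutator subcrossed module $[\Gamma_{n-1}(G),G]$ on generators in both coordinates and verify that evaluating the commutator maps on $\eta$-lifts is independent of the chosen lift; this is exactly the well-definedness of $c_1,c_0,c_1',c_0'$ secured in Appendix A. Once the generating sets are matched under $(\xi_1,\xi_0)$ the conclusion is formal, and the equivalence part proved above requires no further work.
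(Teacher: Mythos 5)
Your proof is correct, and for part (i) it is exactly the paper's own (implicit) argument: the paper offers no proof beyond the two facts recorded immediately before the statement --- quotients of nilpotent (solvable) crossed modules are nilpotent (solvable), and $G/Z(G)$ nilpotent (solvable) forces $G$ nilpotent (solvable) --- and your chain $G$ nilpotent $\Rightarrow G/Z(G)$ nilpotent $\Rightarrow H/Z(H)$ nilpotent $\Rightarrow H$ nilpotent, together with symmetry of $\sim$, is precisely that.

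For part (ii) the paper supplies no argument at all, so here you genuinely go beyond it. Your Hall-style induction --- showing $(\xi_1,\xi_0)$ restricts to isomorphisms $\Gamma_n(G)\longrightarrow\Gamma_n(H)$ for all $n\geq 2$ --- is sound: the intertwining you need, namely that the projections $[G,G]\rightarrow G/Z(G)$ and $[H,H]\rightarrow H/Z(H)$ carry $(\xi_1,\xi_0)$ to $(\eta_1,\eta_0)$, does follow on the generators $c_1(\overline{g_1},\overline{g_0})$ and $c_0(\overline{g_0},\overline{g_0'})$ from diagrams $(1)$--$(2)$ plus the fact that $(\eta_1,\eta_0)$ is a morphism of crossed modules (so it preserves displacements and commutators in the central quotients), and it then extends multiplicatively. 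Two points of comparison. First, there is a cheaper route to (ii) that stays closer to the paper's stated facts: for nontrivial nilpotent $G$ one has $\mathrm{class}(G)=\mathrm{class}(G/Z(G))+1$, so the isomorphism $G/Z(G)\cong H/Z(H)$ alone gives equal nilpotency classes, while the derived length satisfies $\mathrm{dl}(G)=\mathrm{dl}([G,G])+1$, so the isomorphism $(\xi_1,\xi_0)$ alone gives equal derived lengths; no induction is needed. Second, your induction buys more than the statement asks for: it shows the entire lower central series (and derived series) of $G$ and $H$ correspond under $(\xi_1,\xi_0)$, which is exactly the stronger invariance the paper later invokes without proof when it asserts that crossed modules in one isoclinism family share rank, middle length, nilpotency class and lower central series. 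The one detail you should make explicit in a full write-up is the definition of the relative commutator $[\Gamma_n(G),G]$ for crossed modules: its degree-one part has two families of generators, ${}^{g_0}n_1n_1^{-1}$ with $n_1$ in $\Gamma_n(G)$ and $g_0\in G_0$, and ${}^{n_0}g_1g_1^{-1}$ with $n_0$ in $\Gamma_n(G)$ and $g_1\in G_1$, and both families must be matched under $\xi_1$ (the second uses the degree-zero intertwining); your sketch covers this only implicitly.
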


(i) $G$ is nilpotent (solvable) crossed module if and only if $H$ is
nilpotent (solvable).

(ii) If $G$ and $H$ are nilpotent (solvable) and both nontrivial, then they
have the same nilpotency class (derived length).

\begin{remark}
When we consider the groups as crossed modules, then we recover classical
results for isoclinic groups. In fact, if $M\overset{id}{\longrightarrow }M$
and $N\overset{id}{\longrightarrow }N$ are isoclinic crossed modules then we
find that $M$ and $N$ are isoclinic. On the other hand, let $N$, $M$ are
finitely generated groups and $N^{\prime }\trianglelefteq N,$ $M^{\prime
}\trianglelefteq M.$ Then the isoclinism of inclusion crossed modules {%
\xymatrix {N^{\prime} \ar@{^{(}->}[r]^{inc.} &N}} and {\xymatrix {M^{\prime}
\ar@{^{(}->}[r]^{inc.} &M}} give rise to the isoclinism between the pair
groups $(N^{\prime },N)$ and $(M^{\prime },M).$ Also, the converse is true,
that is, if $(G_{1},G_{0})$ and $(H_{1},H_{0})$ are isoclinic pair groups,
then the resulting inclusion crossed modules {\xymatrix {G_{1}
\ar@{^{(}->}[r]^{inc.} &G_{0}}} and {\xymatrix {H_{1} \ar@{^{(}->}[r]^{inc.}
&H_{0}}} are isoclinic. (See \cite{AFT}, for the definition of isoclinic
pair groups)
\end{remark}

\section{Computer Implementations}

GAP is an open-source system for discrete computational algebra. The system
consists a library of mathematical algorithm implementations, a database
about some algebraic properties of small order groups, vector spaces,
modules, algebras, graphs, codes, designs etc. and some character tables of
these algebraic structures. The system has world wide usage in the area of
education and scientific researches. GAP is free software and user
contributions to the system are supported. These contributions are organized
in a form of GAP packages and are distributed together with the system.
Contributors can submit additional packages for inclusion after a reviewing
process.

Since, no standard GAP function yet exist for checking that two groups $M$
and $N$ are isoclinic or not, first we add the function \texttt{%
IsIsoclinicGroup(M,N)}. In the following GAP session it is seen that the
dihedral group with order 8 and the quaternion group are isoclinic. Notice
that two isoclinic groups may have different orders.

\begin{Verbatim}[frame=single, fontsize=\small, commandchars=\\\{\}]
\textcolor{blue}{gap> Q8 := QuaternionGroup(8);}
<pc group of size 8 with 3 generators>
\textcolor{blue}{gap> D8 := DihedralGroup(8);}
<pc group of size 8 with 3 generators>
\textcolor{blue}{gap> IsIsoclinicGroup(Q8,D8);}
true
\end{Verbatim}

In \cite{RJ}, the 115 isoclinism families induced from 2328 groups with
order 128 and their basic properties were given. We add the function \texttt{%
IsoclinismFamily(M)} to determine the isoclinism classes of the group $M$
with order n. We give a table consisting isoclinism classes of 1543 group
with order 192 in Appendix B, by using this function.

The computer applications of crossed modules were given by Alp and Wensley,
in \cite{AW}, by the shared package XMod. To add a function checks if any
two crossed modules are isoclinic or not, first we need to define the
functions; factor crossed modules, commutator crossed modules which haven't
implemented in the XMod package. Also we redefine a function to find center
of a given crossed module, to make compatible with the other defined
functions.

\subsection{Implementations for centers of crossed modules}

Up to the definition of isoclinic crossed modules, we first need to
construct the center of a given crossed module by using Definition \ref{09}
which is called by \texttt{CenterXMod(XM).} The step-by-step construction of
this function is given as follows;

\textbf{Step 1}: We added the function \texttt{G1G0(XM)}, for computing the
subgroup $G_{1}^{G_{0}},$ the fixed point of $G_{1},$ induced from the
crossed module $XM:G_{1}\overset{d}{\longrightarrow }G_{0}.$
\begin{Verbatim}[frame=single, fontsize=\small, commandchars=\\\{\}]
\textcolor{blue}{gap> G1G0(XM);}
<pc group of size 2 with 1 generators>
\textcolor{blue}{gap> IsSubgroup(Source(XM),last);}
true
\end{Verbatim}

\textbf{Step 2}: We added the function \texttt{StG0G1(XM)}, for computing
the subgroup $St_{G_{0}}G_{1},$ the stabilizer of $G_{1}$ in $G_{0},$
induced from the crossed module $XM:G_{1}\overset{d}{\longrightarrow }G_{0}.$
\begin{Verbatim}[frame=single, fontsize=\small, commandchars=\\\{\}]
\textcolor{blue}{gap> StG0T(XM);}
<pc group of size 1 with 0 generators>
\textcolor{blue}{gap> IsSubgroup(Range(XM),last);}
true
\end{Verbatim}

\textbf{Step 3}: By the definition of center of a crossed module $XM:G_{1}%
\overset{d}{\longrightarrow }G_{0}$ given in Definition \ref{09} $,$ we
added the function \texttt{CenterXMod(XM),} for computing the center.
\begin{Verbatim}[frame=single, fontsize=\small, commandchars=\\\{\}]
\textcolor{blue}{gap> ZXM := CenterXMod(XM);}
[Group( [ f4 ] )->Group( <identity> of ... )]
\textcolor{blue}{gap> IsXMod(ZXM);}
true
\textcolor{blue}{gap> IsNormalXMod(XM,ZXM);}
true
\end{Verbatim}

\subsection{Implementations for factor and commutator subcrossed modules}

Since no standard GAP function yet exist for computing the commutator
subcrossed module of a given crossed module, we added the function \texttt{%
CommutatorSubXMod(XM)}. The step-by-step construction of this function is as
follows;

\textbf{Step 1}: We added the function \texttt{DG0G1(XM), }for computing the
subgroup $D_{G_{0}}(G_{1})$ induced from the crossed module $XM:G_{1}\overset%
{d}{\longrightarrow }G_{0}$ defined in Definition \ref{11}.
\begin{Verbatim}[frame=single, fontsize=\small, commandchars=\\\{\}]
\textcolor{blue}{gap> DerivedSubgroup(Range(XM));}
Group([  ])
\textcolor{blue}{gap> DG0G1(XM);}
<pc group of size 4 with 1 generators>
\textcolor{blue}{gap> IsSubgroup(Source(XM),last);}
true
\end{Verbatim}

\textbf{Step 2}: We added the function \texttt{DerivedSubXMod(XM)} for
computing the subcrossed module $[XM,XM]$ defined in Definition \ref{11}.
\begin{Verbatim}[frame=single, fontsize=\small, commandchars=\\\{\}]
\textcolor{blue}{gap> KM := DerivedSubXMod(XM);}
[Group( [ f3 ] )->Group( <identity> of ... )]
\textcolor{blue}{gap> IsSubXMod(XM,KM);}
true
\textcolor{blue}{gap> IsNormal(XM,KM);}
true
\end{Verbatim}

Additionally, we added the function \texttt{FactorXMod(XM,NM)}, for
computing the quotient crossed modules.
\begin{Verbatim}[frame=single, fontsize=\small, commandchars=\\\{\}]
\textcolor{blue}{gap> FactorXMod(XM,NM);}
[Group( [ f1, <identity> of ..., <identity> of ... ] )->Group( [ f2, f2 ] )]
\textcolor{blue}{gap> IsXMod(last);}
true
\end{Verbatim}

\subsection{Implementations for isoclinic crossed modules}

We have added a function \texttt{IsIsoclinicXMod(XM1,XM2)}, for checking two
crossed modules are isoclinic or not. Step-by-step construction of this
function is as follows;

\textbf{Step 1}: First of all, we needed a function for isomorphism of
crossed modules and so we added the function \texttt{%
IsIsomorphicXMod(XM1,XM2)}. In the following GAP session, it is proved that
the constructed crossed modules XM and XM2 are not isomorphic.
\begin{Verbatim}[frame=single, fontsize=\small, commandchars=\\\{\}]
\textcolor{blue}{gap> C2 := Cat1(32,9,1);}
[(C8 x C2) : C2=>Group( [ f2, f2 ] )]
\textcolor{blue}{gap> XM2 := XMod(C2);}
[Group( [ f1*f2*f3, f3, f4, f5 ] )->Group( [ f2, f2 ] )]
\textcolor{blue}{gap> IsIsomorphicXMod(XM,XM2);}
false
\end{Verbatim}

\textbf{Step 2}: We determined all isomorphisms between the factor crossed
modules $XM1/Z(XM1)$ and $XM2/Z(XM2)$. If there is no such isomorphism, we
arrange the function \newline
\texttt{IsIsoclinicXMod(XM1,XM2)} to make its output \texttt{false}.
\begin{Verbatim}[frame=single, fontsize=\small, commandchars=\\\{\}]
\textcolor{blue}{gap> ZXM2 := CenterXMod(XM2);;}
\textcolor{blue}{gap> IsIsomorphicXMod(FactorXMod(XM,ZXM),FactorXMod(XM2,ZXM2));}
true
\end{Verbatim}

\textbf{Step 3}: We continued the same procedure given in Step 2 for the
commutator subcrossed modules.
\begin{Verbatim}[frame=single, fontsize=\small, commandchars=\\\{\}]
\textcolor{blue}{gap> KM2 := DerivedSubXMod(XM2);;}
\textcolor{blue}{gap> IsIsomorphicXMod(KM,KM2);}
true
\end{Verbatim}

\textbf{Step 4:} Then, after determining the existence of two isomorphism
given in Step 2 and Step 3, we arrange \texttt{IsIsoclinicXMod(XM1,XM2)} to
give out put \texttt{true} if the isomorphisms make the diagrams (1) and (2)
in Definition \ref{04} commutative.
\begin{Verbatim}[frame=single, fontsize=\small, commandchars=\\\{\}]
\textcolor{blue}{gap> IsIsoclinicXMod(XM,XM2);}
true
\textcolor{blue}{gap> Size(XM2);}
[ 16, 2 ]
\end{Verbatim}

\begin{remark}
This GAP session shows that two crossed modules whose orders are different
can be isoclinic as it is the case for groups.
\end{remark}

\section{Character Tables}

For determining isoclinism families of order $[n,m],$ we added a function
\texttt{AllXMods(n,m)} to find all crossed modules of order $[n,m].$
\begin{Verbatim}[frame=single, fontsize=\small, commandchars=\\\{\}]
\textcolor{blue}{gap> list := AllXMods(4,4);;}
\textcolor{blue}{gap> Length(list);}
60
\end{Verbatim}

Then we added the function \texttt{AllXModsByIso(list)} to choose one
representative from all isomorphism families and construct the new list of
crossed modules. Naturally, in the new list, there is no isomorphic crossed
modules.
\begin{Verbatim}[frame=single, fontsize=\small, commandchars=\\\{\}]
\textcolor{blue}{gap> ilist := AllXModsByIso(list);;}
\textcolor{blue}{gap> Length(ilist);}
18
\end{Verbatim}

We added the function \texttt{IsoclinismXModFamily(XM,ilist)}, to get the
isoclinism families for a given order [n,m].

There are two isoclinism families of crossed modules of order [4,4] which is
given in the following GAP session.
\begin{Verbatim}[frame=single, fontsize=\small, commandchars=\\\{\}]
\textcolor{blue}{gap> IsoclinismXModFamily(iso_list[3],ilist);}
[ 1, 3, 4, 6, 8, 10, 12, 14, 16, 18 ]
\textcolor{blue}{gap> IsoclinismXModFamily(iso_list[2],ilist);}
[ 2, 5, 7, 9, 11, 13, 15, 17 ]
\end{Verbatim}

Now, we introduce the notion \textquotedblleft rank\textquotedblright\ and
\textquotedblleft middle length\textquotedblright\ of a crossed module.

\begin{definition}
Let $G:G_{1}\overset{d}{\longrightarrow }G_{0}$ be a finite crossed module.
Then the pair%
\begin{equation*}
\begin{array}{c}
\left( \log _{2}\left\vert G_{1}^{G_{0}}\cap D_{G_{0}}(G_{1})\right\vert ,%
\text{ }\log _{2}\left\vert (St_{G_{0}}(G_{1})\cap Z(G_{0}))\cap \lbrack
G_{0},G_{0}]\right\vert \right) \\
+ \\
\left( \log _{2}\left\vert G_{1}/G_{1}^{G_{0}}\right\vert ,\text{ }\log
_{2}\left\vert G_{0}/(St_{G_{0}}(G_{1})\cap Z(G_{0}))\right\vert \right)%
\end{array}%
\end{equation*}%
is called the \textit{rank} of $G.$ Also the pair%
\begin{equation*}
\left( \log _{2}\left\vert D_{G_{0}}(G_{1})/(G_{1}^{G_{0}}\cap
D_{G_{0}}(G_{1}))\right\vert ,\text{ }\log _{2}\left\vert
[G_{0},G_{0}]/((St_{G_{0}}(G_{1})\cap Z(G_{0}))\cap \lbrack
G_{0},G_{0}])\right\vert \right)
\end{equation*}%
is called the \textit{middle length} of $G.$
\end{definition}

The rank, middle length, nilpotency class and lower central series of the
crossed modules in the same isoclinism family are equal. For computing these
gadgets (these can be thought as the gadgets for correcting our isoclinism
definition) we added the functions \texttt{RankOfXMod(XM)}, \texttt{%
MiddleLengthOfXMod(XM)}, \texttt{LowerCentralSeriesOfXMod(XM)} and \newline
\texttt{NilpotencyClassOfXMod(XM)}.
\begin{Verbatim}[frame=single, fontsize=\small, commandchars=\\\{\}]
\textcolor{blue}{gap> RankOfXMod(XM);}
[ 3, 1 ]
\textcolor{blue}{gap> MiddleLengthOfXMod(XM);}
[ 1, 0 ]
\textcolor{blue}{gap> LowerCentralSeriesOfXMod(XM);}
[ [Group( [ f1*f2*f3, f3, f4 ] )->Group( [ f2, f2 ] )],
  [Group( [ f3 ] )->Group( <identity> of ... )],
  [Group( [ f4 ] )->Group( <identity> of ... )],
  [Group( <identity> of ... )->Group( <identity> of ... )] ]
\textcolor{blue}{gap> Length(last);}
4
\textcolor{blue}{gap> NilpotencyClassOfXMod(XM);}
3
\end{Verbatim}

\begin{example}
The groups of order $8$ has $5$ isomorphism classes and $2$ isoclinism
families which are listed as follows;%

\begin{tabular}{cccccccc}
\multicolumn{8}{c}{Table I} \\
\multicolumn{8}{c}{Number of Groups in Each Isoclinism Family} \\
\multicolumn{8}{c}{and Some Family Invariants} \\ \hline\hline
Family & Numbers & Represent. & Rank & Middle Length & Nilpotency Class & $%
G/Z$ & $\gamma _{2}(G)$ \\ \hline
1 & 3 & [8,1] & 0 & 0 & 1 & [1,1] &  \\
2 & 2 & [8,3] & 3 & 0 & 2 & [4,2] & [2,1] \\ \hline
&  &  &  &  & & &%
\end{tabular}%

By using these 5 isomorphism class, we get 9008 crossed modules with order
[8,8], 294 isomorphism classes and 20 isoclinism families.

The following informations is listed in Table II for each isoclinism family;

\begin{enumerate}
\item the number of crossed modules in the family,

\item the rank of crossed modules in the family,

\item the middle length of crossed modules in the family,

\item the nilpotency class, $c>0,$ of the crossed modules in the family.

\item the size of the central quotient, $XM/Z(XM)$, of a crossed modules $XM$
in the family.

\item the size of the non-trivial or non-repeatedly terms, $\gamma
_{2}(XM),...,\gamma _{c}(XM),$ of the lower central series of $XM;$ here $%
\gamma _{1}(XM)=XM$ and $\gamma _{i+1}(XM)=[\gamma _{i}(XM),XM],$ for $1\leq
i\leq c.$
\end{enumerate}

\begin{tabular}{cccccccc}
\multicolumn{8}{c}{Table II} \\
\multicolumn{8}{c}{Number of Crossed Modules in Each Isoclinism Family} \\
\multicolumn{8}{c}{and Some Family Invariants} \\ \hline\hline
Fam. & Num. & Rank & M. L. & Class & $\left\vert XM/Z(XM)\right\vert $ & $%
\left\vert \gamma _{2}(XM)\right\vert $ & $\left\vert \gamma
_{3}(XM)\right\vert $ \\ \hline
1 & 37 & [0,0] & [0,0] & 1 & [1,1] &  &  \\
2 & 79 & [2,1] & [0,0] & 2 & [2,2] & [2,1] &  \\
3 & 18 & [3,1] & [1,0] & 3 & [4,2] & [4,1] & [2,1] \\
4 & 8 & [3,2] & [1,0] & 3 & [4,4] & [4,1] & [2,1] \\
5 & 14 & [0,3] & [0,0] & 2 & [1,4] & [1,2] &  \\
6 & 42 & [2,3] & [0,0] & 2 & [2,4] & [2,2] &  \\
7 & 12 & [3,3] & [1,0] & 3 & [4,4] & [4,2] & [2,1] \\
8 & 8 & [3,3] & [1,0] & 3 & [4,4] & [4,2] & [2,1] \\
9 & 4 & [3,3] & [1,0] & 3 & [4,4] & [4,2] & [2,1] \\
10 & 4 & [3,2] & [1,0] & 3 & [4,4] & [4,1] & [2,1] \\
11 & 10 & [3,2] & [0,0] & 2 & [2,4] & [4,1] &  \\
12 & 15 & [3,2] & [0,0] & 2 & [4,4] & [2,1] &  \\
13 & 10 & [3,3] & [0,0] & 2 & [2,4] & [4,2] &  \\
14 & 2 & [3,3] & [1,1] & 3 & [4,8] & [4,2] & [2,1] \\
15 & 15 & [3,3] & [0,0] & 2 & [4,4] & [2,2] &  \\
16 & 6 & [2,3] & [0,0] & 2 & [2,4] & [2,2] &  \\
17 & 2 & [3,3] & [1,1] & 3 & [4,8] & [4,2] & [2,1] \\
18 & 2 & [3,3] & [0,0] & 2 & [4,4] & [2,2] &  \\
19 & 2 & [3,2] & [0,0] & 2 & [4,4] & [2,1] &  \\
20 & 4 & [3,2] & [1,0] & 3 & [4,4] & [4,1] & [2,1] \\ \hline
&  &  &  &  & & &%
\end{tabular}%

\end{example}

\newpage
\begin{example}
The group of order 18 has 5 isomorphism classes and 4 isoclinism families.%

\begin{tabular}{cccccccc}
\multicolumn{8}{c}{Table III} \\
\multicolumn{8}{c}{Number of Groups in Each Isoclinism Family} \\
\multicolumn{8}{c}{and Some Family Invariants} \\ \hline\hline
Fam. & Num. & Rep. & Rank & M. L. & Class & $G/Z$ & $\gamma _{2}(G)$ \\
\hline
1 & 1 & [18,1] & 4.17 & 3.17 & 0 & [18,1] & [9,1] \\
2 & 2 & [18,2] & 0.00 & 0.00 & 1 & [1,1] &  \\
3 & 1 & [18,3] & 2.58 & 1.58 & 0 & [6,1] & [3,1] \\
4 & 1 & [18,4] & 4.17 & 3.17 & 0 & [18,4] & [9,2] \\ \hline
&  &  &  &  & & &%
\end{tabular}%

By using these 5 isomorphism classes, we get 2222 crossed modules with order
[18,18], 97 isomorphism classes, 46 isoclinism families.

\begin{tabular}{cccccccc}
\multicolumn{8}{c}{Table IV} \\
\multicolumn{8}{c}{Number of Crossed Modules in Each Isoclinism Family} \\
\multicolumn{8}{c}{and Some Family Invariants} \\ \hline\hline
Fam. & Num. & Rank & M. L. & Class & $\left\vert XM/Z(XM)\right\vert $ & $%
\left\vert \gamma _{2}(XM)\right\vert $ & $\left\vert \gamma
_{3}(XM)\right\vert $ \\ \hline
1 & 1 & [4.17,4.17] & [3.17,3.17] & 0 & [18,18] & [9,9] &  \\
2 & 2 & [0.00,4.17] & [0.00,3.17] & 0 & [1,18] & [1,9] &  \\
3 & 1 & [3.17,4.17] & [3.17,3.17] & 0 & [9,18] & [9,9] &  \\
4 & 1 & [3.17,4.17] & [3.17,3.17] & 0 & [9,18] & [9,9] &  \\
5 & 1 & [3.17,4.17] & [3.17,3.17] & 0 & [9,18] & [9,9] &  \\
6 & 20 & [0.00,0.00] & [0.00,0.00] & 1 & [1,1] &  &  \\
7 & 2 & [3.17,2.58] & [3.17,0.00] & 0 & [9,6] & [9,1] &  \\
8 & 16 & [3.17,1.58] & [0.00,0.00] & 2 & [3,3] & [3,1] &  \\
9 & 2 & [3.17,1.00] & [3.17,0.00] & 0 & [9,2] & [9,1] &  \\
10 & 4 & [0.00,2.58] & [0.00,1.58] & 0 & [1,6] & [1,3] &  \\
11 & 1 & [3.17,4.17] & [3.17,1.58] & 0 & [9,18] & [9,3] &  \\
12 & 2 & [3.17,4.17] & [0.00,1.58] & 0 & [3,18] & [3,3] & [1,3] \\
13 & 1 & [3.17,2.58] & [3.17,1.58] & 0 & [9,6] & [9,3] &  \\
14 & 1 & [3.17,4.17] & [3.17,1.58] & 0 & [9,18] & [9,3] &  \\
15 & 1 & [3.17,2.58] & [3.17,1.58] & 0 & [9,6] & [9,3] &  \\ \hline
&  &  &  &  & \multicolumn{3}{r}{\textit{table continued}}%
\end{tabular}%

\begin{tabular}{cccccccc}
\hline\hline
Fam. & Num. & Rank & M. L. & Class & $\left\vert XM/Z(XM)\right\vert $ & $%
\left\vert \gamma _{2}(XM)\right\vert $ & $\left\vert \gamma
_{3}(XM)\right\vert $ \\
16 & 2 & [0.00,4.17] & [0.00,3.17] & 0 & [1,18] & [1,9] &  \\
17 & 1 & [3.17,4.17] & [3.17,3.17] & 0 & [9,18] & [9,9] &  \\
18 & 1 & [3.17,4.17] & [3.17,3.17] & 0 & [9,18] & [9,9] &  \\
19 & 2 & [2.58,2.58] & [1.58,1.58] & 0 & [6,6] & [3,3] &  \\
20 & 1 & [4.17,4.17] & [3.17,3.17] & 0 & [18,18] & [9,9] &  \\
21 & 1 & [1.58,4.17] & [1.58,3.17] & 0 & [3,18] & [3,9] &  \\
22 & 1 & [3.17,4.17] & [1.58,3.17] & 0 & [9,18] & [3,9] &  \\
23 & 1 & [3.17,4.17] & [1.58,3.17] & 0 & [9,18] & [9,9] &  \\
24 & 1 & [3.17,4.17] & [3.17,3.17] & 0 & [9,18] & [9,9] &  \\
25 & 1 & [1.58,4.17] & [1.58,3.17] & 0 & [3,18] & [3,9] &  \\
26 & 1 & [3.17,4.17] & [1.58,3.17] & 0 & [3,18] & [9,9] &  \\
27 & 1 & [3.17,4.17] & [3.17,3.17] & 0 & [9,18] & [9,9] &  \\
28 & 4 & [1.58,1.00] & [1.58,0.00] & 0 & [3,2] & [3,1] &  \\
29 & 2 & [3.17,2.58] & [3.17,0.00] & 0 & [9,6] & [9,1] &  \\
30 & 2 & [3.17,1.00] & [3.17,0.00] & 0 & [9,2] & [9,1] &  \\
31 & 2 & [1.58,2.58] & [1.58,1.58] & 0 & [3,6] & [3,3] &  \\
32 & 1 & [3.17,4.17] & [3.17,1.58] & 0 & [9,18] & [9,3] &  \\
33 & 1 & [3.17,2.58] & [3.17,1.58] & 0 & [9,6] & [9,3] &  \\
34 & 2 & [3.17,2.58] & [1.58,1.58] & 0 & [9,6] & [3,3] &  \\
35 & 1 & [3.17,2.58] & [1.58,1.58] & 0 & [3,6] & [9,3] &  \\
36 & 2 & [1.58,2.58] & [1.58,1.58] & 0 & [3,6] & [3,3] &  \\
37 & 1 & [3.17,4.17] & [3.17,1.58] & 0 & [9,18] & [9,3] &  \\
38 & 1 & [3.17,2.58] & [3.17,1.58] & 0 & [9,6] & [9,3] &  \\
39 & 1 & [1.58,4.17] & [1.58,3.17] & 0 & [3,18] & [3,9] &  \\
40 & 1 & [3.17,4.17] & [1.58,3.17] & 0 & [9,18] & [3,9] &  \\
41 & 1 & [3.17,4.17] & [1.58,3.17] & 0 & [3,18] & [9,9] &  \\
42 & 1 & [3.17,4.17] & [3.17,3.17] & 0 & [9,18] & [9,9] &  \\
43 & 1 & [1.58,4.17] & [1.58,3.17] & 0 & [3,18] & [3,9] &  \\
44 & 1 & [3.17,4.17] & [1.58,3.17] & 0 & [3,18] & [9,9] &  \\
45 & 1 & [3.17,4.17] & [3.17,3.17] & 0 & [9,18] & [9,9] &  \\
46 & 1 & [3.17,4.17] & [3.17,3.17] & 0 & [9,18] & [9,9] &  \\ \hline
&  &  &  &  & & &%
\end{tabular}%
\end{example}

\section*{Appendix A}

\textit{Well definition of the maps }$c_{1}$\textit{\ and }$c_{0}$\textit{\
used in Definition \ref{04}:}

Let $(g_{1}G_{1}^{G_{0}},g_{0}St_{G_{0}}(G_{1})\cap
Z(G_{0}))=(a_{1}G_{1}^{G_{0}},a_{0}St_{G_{0}}(G_{1})\cap Z(G_{0})),$ then $%
g_{1}^{-1}(a_{1})\in G_{1}^{G_{0}}\subseteq Z(G_{1})$ and $%
g_{0}^{-1}(a_{0})\in St_{G_{0}}(G_{1})\cap Z(G_{0})\subseteq Z(G_{0}).$ Then
we have

\begin{equation*}
\begin{array}{lll}
^{g_{0}}g_{1}g_{1}^{-1} & = & (^{g_{0}}g_{1})g_{1}^{-1}(a_{1})(a_{1})^{-1}
\\
& = & (^{g_{0}}g_{1})^{g_{0}}(g_{1}^{-1}a_{1})(a_{1})^{-1}\text{ }(\because
g_{1}^{-1}a_{1}\in G_{1}^{G_{0}}) \\
& = & ^{g_{0}}(g_{1}g_{1}^{-1}a_{1})(a_{1})^{-1}\text{ }(\because
(^{g_{0}}gg_{1})=(^{g_{0}}g)(^{g_{0}}g_{1})) \\
& = & (^{g_{0}}a_{1})(a_{1})^{-1} \\
& = & ^{(g_{0}^{-1}a_{0})}(^{g_{0}}a_{1})(a_{1})^{-1}\text{ }(\because
g_{0}^{-1}a_{0}\in St_{G_{0}}(G_{1})) \\
& = & (^{(g_{0}^{-1}a_{0})g_{0}}a_{1})(a_{1})^{-1}\text{ }(\because
^{(gg^{\prime })}g_{1}=\text{ }^{^{g}g^{\prime }}g_{1}) \\
& = & (^{g_{0}(g_{0}^{-1}a_{0})}a_{1})(a_{1})^{-1}\text{ }(\because
g_{0}^{-1}a_{0}\in Z(G_{0})) \\
& = & (^{a_{0}}a_{1})(a_{1})^{-1}%
\end{array}%
\end{equation*}%
which gives the well-definition of $c_{1}.$

Similarly, let $\overline{g_{0}},\overline{a_{0}},\overline{g_{0}^{\prime }},%
\overline{a_{0}^{\prime }}\in \overline{G_{0}}$. If $(\overline{g_{0}},%
\overline{g_{0}^{\prime }})=(\overline{a_{0}},\overline{a_{0}^{\prime }}),$
then $g_{0}^{-1}a_{0}\in St_{G_{0}}(G_{1})\cap Z(G_{0})\subseteq Z(G_{0})$
and $(g_{0}^{\prime })^{-1}a_{0}^{\prime }\in St_{G_{0}}(G_{1})\cap
Z(G_{0})\subseteq Z(G_{0}).$ Then, we have%
\begin{eqnarray*}
\lbrack g_{0},g_{0}^{\prime }] &=&g_{0}g_{0}^{\prime
}g_{0}^{-1}(g_{0}^{\prime })^{-1} \\
&=&g_{0}g_{0}^{\prime }g_{0}^{-1}(a_{0}a_{0}^{-1})(g_{0}^{\prime })^{-1} \\
&=&g_{0}g_{0}^{\prime }(g_{0}^{-1}a_{0})a_{0}^{-1}(g_{0}^{\prime })^{-1} \\
&=&g_{0}(g_{0}^{-1}a_{0})g_{0}^{\prime }a_{0}^{-1}(g_{0}^{\prime })^{-1}%
\text{ }(\because g_{0}^{-1}a_{0}\in Z(G_{0})) \\
&=&a_{0}g_{0}^{\prime }a_{0}^{-1}(g_{0}^{\prime })^{-1} \\
&=&a_{0}g_{0}^{\prime }a_{0}^{-1}(g_{0}^{\prime })^{-1}(a_{0}^{\prime
}(a_{0}^{\prime })^{-1}) \\
&=&a_{0}g_{0}^{\prime }a_{0}^{-1}((g_{0}^{^{\prime }})^{-1}a_{0}^{\prime
})(a_{0}^{\prime })^{-1} \\
&=&a_{0}g_{0}^{\prime }((g_{0}^{\prime })^{-1}a_{0}^{\prime
})a_{0}^{-1}(a_{0}^{\prime })^{-1}\text{ }(\because (g_{0}^{\prime
})^{-1}a_{0}^{\prime }\in Z(G_{0})) \\
&=&a_{0}a_{0}^{\prime }a_{0}^{-1}(a_{0}^{\prime })^{-1} \\
&=&[a_{0},a_{0}^{\prime }]
\end{eqnarray*}%
which gives the well-definition of $c_{0.}$

Well definition of the maps $c_{1}^{\prime }$ and $c_{0}^{\prime }$ can be
shown by a similar way.

\textit{Proof of Proposition \ref{06}:}

(a) i) Let $\delta ,\delta ^{\prime }\in D_{\mathcal{C}}(G_{0},G_{1}).$ We
first show that $\delta \delta ^{\prime }\in D_{\mathcal{C}}(G_{0},G_{1}).$
Since $\delta ,\delta ^{\prime }\in D_{\mathcal{C}}(G_{0},G_{1}),$ there
exist $g_{1},g_{1}^{\prime }\in G_{1}$ such that $\delta (g_{0})=g_{1}$ $%
^{g_{0}}g_{1}^{-1}$and $\delta ^{\prime }(g_{0})=(g_{1}^{\prime
})^{g_{0}}g_{1}^{\prime },$ for all $g_{0}\in G_{0},$ $g_{1},g_{1}^{\prime
}\in G_{1}.$ Then,%
\begin{equation*}
\begin{array}{lll}
\delta \delta ^{\prime }(g_{0}) & = & \delta (d\delta ^{\prime
}(g_{0})g_{0})\delta ^{\prime }(g_{0}) \\
& = & \delta (d(\delta ^{\prime }(g_{0}))g_{0})\delta ^{\prime }(g_{0}) \\
& = & \delta (d(\delta ^{\prime }(g_{0}))^{d(\delta ^{\prime
}(g_{0}))}\delta (g_{0})\delta ^{\prime }(g_{0})\text{ }(\because \delta \in
D(G_{0},G_{1})) \\
& = & \delta (d(\delta ^{\prime }(g_{0}))\delta ^{\prime }(g_{0})\delta
(g_{0})(\delta ^{\prime }(g_{0}))^{-1}\delta ^{\prime }(g_{0})\text{ }%
(\because d\sim \text{crossed module}) \\
& = & g_{1}\text{ }^{d(\delta ^{\prime }(g_{0}))}g_{1}^{-1}(g_{1}^{\prime
})^{g_{0}}(g_{1}^{\prime })^{-1}g_{1}\text{ }^{g_{0}}g_{1}^{-1} \\
& = & g_{1}\delta ^{\prime }(g_{0})g_{1}^{-1}(\delta ^{\prime
}(g_{0}))^{-1}(g_{1}^{\prime })^{g_{0}}(g_{1}^{\prime })^{-1}g_{1}\text{ }%
^{g_{0}}g_{1}^{-1} \\
& = & g_{1}((g_{1}^{\prime })^{g_{0}}(g_{1}^{\prime
})^{-1})g_{1}^{-1}((g_{1}^{\prime })^{g_{0}}(g_{1}^{\prime
})^{-1})^{-1}(g_{1}^{\prime })^{g_{0}}(g_{1}^{\prime })^{-1}g_{1}\text{ }%
^{g_{0}}g_{1}^{-1} \\
& = & g_{1}g_{1}^{\prime }\text{ }^{g_{0}}(g_{1}^{\prime
})^{-1}g_{1}^{-1}(^{g_{0}}(g_{1}^{\prime })^{-1})^{-1}(g_{1}^{\prime
})^{-1}(g_{1}^{\prime })(^{g_{0}}(g_{1}^{\prime })^{-1})g_{1}\text{ }%
^{g_{0}}g_{1}^{-1} \\
& = & g_{1}g_{1}^{\prime }\text{ }^{g_{0}}(g_{1}^{\prime
})^{-1}g_{1}^{-1}(^{g_{0}}(g_{1}^{\prime
})^{-1})^{-1}(^{g_{0}}(g_{1}^{\prime })^{-1})g_{1}\text{ }^{g_{0}}g_{1}^{-1}
\\
& = & g_{1}g_{1}^{\prime }\text{ }^{g_{0}}(g_{1}^{\prime
})^{-1}g_{1}^{-1}g_{1}\text{ }^{g_{0}}g_{1}^{-1} \\
& = & g_{1}g_{1}^{\prime }\text{ }^{g_{0}}((g_{1}^{\prime })^{-1}g_{1}^{-1})
\\
& = & (g_{1}g_{1}^{\prime })^{g_{0}}(g_{1}g_{1}^{\prime })^{-1}%
\end{array}%
\end{equation*}%
i.e $\delta \delta ^{\prime }\in D_{\mathcal{C}}(G_{0},G_{1}).$\newline
ii) Let $\delta \in D_{\mathcal{C}}(G_{0},G_{1}).$ Since $\delta \in D_{%
\mathcal{C}}(G_{0},G_{1}),$ there exists $g_{1}\in G_{1}$ such that $\delta
(g_{0})=g_{1}$ $^{g_{0}}g_{1}^{-1},$ for all $g_{0}\in G_{0}$. Define $%
\delta ^{-1}(g_{0})=(g_{1}^{-1})^{g_{0}}g_{1}$, then we have%
\begin{eqnarray*}
\delta \delta ^{-1}(g_{0}) &=&g_{1}g_{1}^{-1}\text{ }%
^{g_{0}}(g_{1}^{-1}g_{1})^{-1} \\
&=&g_{1}g_{1}^{-1}{}^{g_{0}}e_{G_{1}} \\
&=&e_{G_{1}}e_{G_{1}} \\
&=&e_{G_{1}} \\
&=&id_{D_{\mathcal{C}}(G_{0},G_{1})}(g_{0}).
\end{eqnarray*}%
So $D_{\mathcal{C}}(G_{0},G_{1})\leq D(G_{0},G_{1}).$

(b) Let $(\alpha ,\beta ),(\alpha ^{\prime },\beta ^{\prime })\in Aut_{_{%
\mathcal{C}}}(G).$ Since $(\alpha ,\beta ),(\alpha ^{\prime },\beta ^{\prime
})\in Aut_{_{\mathcal{C}}}(G),$ there exist $g_{0},g_{0}^{\prime }\in G_{0}$
such that $\alpha (g_{1})=$ $^{g_{0}}g_{1};$\newline
$\alpha ^{\prime }(g_{1})=$ $^{g_{0}^{\prime }}g_{1};$ $\beta
(h_{0})=g_{0}h_{0}g_{0}^{-1};$ $\beta ^{\prime }(h_{0})=g_{0}^{\prime
}h_{0}(g_{0}^{\prime })^{-1},$ for all $g_{0},h_{0}\in G_{0}$, $g_{1}\in
G_{1}$. Then%
\begin{eqnarray*}
(\alpha \circ \alpha ^{\prime })(g_{1}) &=&\alpha (\alpha ^{\prime }(g_{1}))
\\
&=&\alpha (^{g_{0}}g_{1}) \\
&=&^{^{g_{0}^{\prime }}g_{0}}g_{1} \\
&=&^{(g_{0}^{\prime }g_{0})}g_{1}
\end{eqnarray*}
and%
\begin{eqnarray*}
(\beta \circ \beta ^{\prime })(h_{0}) &=&\beta (\beta ^{\prime }(h_{0})) \\
&=&\beta (g_{0}^{\prime }h_{0}(g_{0}^{\prime })^{-1}) \\
&=&g_{0}(g_{0}^{\prime }h_{0}(g_{0}^{\prime })^{-1})g_{0}^{-1} \\
&=&g_{0}g_{0}^{\prime }h_{0}(g_{0}g_{0}^{\prime })^{-1}.
\end{eqnarray*}
So $(\alpha ,\beta )\circ (\alpha ^{\prime },\beta ^{\prime })\in Aut_{_{%
\mathcal{C}}}(G),$ for all $(\alpha ,\beta ),(\alpha ^{\prime },\beta
^{\prime })\in Aut_{_{\mathcal{C}}}(G).$

Let $(\alpha ,\beta )\in Aut_{_{\mathcal{C}}}(G).$ If we define $(\alpha
,\beta )^{-1}=(\alpha ^{-1},\beta ^{-1})$ by $\alpha ^{-1}(g_{1})=$ $%
^{g_{0}^{-1}}g_{1},$ $\beta ^{-1}(g_{0}^{\prime })=g_{0}^{-1}g_{0}^{\prime
}g_{0},$ for all $g_{0},g_{0}^{\prime }\in G_{0},$ $g_{1}\in G_{1},$ then we
have%
\begin{eqnarray*}
(\alpha \circ \alpha ^{-1})(g_{1}) &=&\alpha (\alpha ^{-1}(g_{1})) \\
&=&\alpha (^{g_{0}^{-1}}g_{1}) \\
&=&^{^{g_{0}}g_{0}^{-1}}g_{1} \\
&=&^{(g_{0}g_{0}^{-1})}g_{1} \\
&=&^{e_{G_{0}}}g_{1} \\
&=&g_{1} \\
&=&id_{G_{1}}(g_{1})
\end{eqnarray*}%
and similarly $\beta \circ \beta ^{-1}=id_{G_{0}}.$ So $Aut_{_{\mathcal{C}%
}}(G)\leq Aut(G).$

\section*{Appendix B}

\begin{tabular}{cccccccccccc}
\multicolumn{12}{c}{Table V} \\
\multicolumn{12}{c}{Number of Groups in Each Isoclinism Family} \\
\multicolumn{12}{c}{and Some Family Invariants} \\ \hline\hline
Fam. & Num. & Rep. & Rank & M. L. & Class & $G/Z$ & $\gamma _{2}(G)$ & $%
\gamma _{3}(G)$ & $\gamma _{4}(G)$ & $\gamma _{5}(G)$ & $\gamma _{6}(G)$ \\
\hline
1 & 19 & [192,1] & 2.58 & 1.58 & 0 & [6,1] & [3,1] &  &  &  &  \\
2 & 11 & [192,2] & 0.00 & 0.00 & 1 & [1,1] &  &  &  &  &  \\
3 & 1 & [192,3] & 7.58 & 6.00 & 0 & [192,3] & [64,2] &  &  &  &  \\
4 & 1 & [192,4] & 7.58 & 4.00 & 0 & [48,3] & [64,19] &  &  &  &  \\
5 & 53 & [192,6] & 4.58 & 1.58 & 0 & [12,4] & [6,2] & [3,1] &  &  &  \\
6 & 3 & [192,7] & 7.58 & 4.58 & 0 & [96,6] & [48,2] & [24,2] & [12,2] & [6,2]
& [3,1] \\
7 & 10 & [192,10] & 7.58 & 2.58 & 0 & [48,14] & [24,9] & [12,5] & [3,1] &  &
\\
8 & 25 & [192,15] & 5.58 & 2.58 & 0 & [24,6] & [12,2] & [6,2] & [3,1] &  &
\\
9 & 5 & [192,25] & 7.58 & 1.58 & 0 & [48,11] & [12,2] & [3,1] &  &  &  \\
10 & 12 & [192,27] & 6.58 & 2.58 & 0 & [48,14] & [12,5] & [6,2] & [3,1] &  &
\\ \hline
&  &  &  &  &  &  &  &  & \multicolumn{3}{r}{\textit{table continued}}%
\end{tabular}%

\begin{tabular}{cccccccccccc}
\hline\hline
Fam. & Num. & Rep. & Rank & M. L. & Class & $G/Z$ & $\gamma _{2}(G)$ & $%
\gamma _{3}(G)$ & $\gamma _{4}(G)$ & $\gamma _{5}(G)$ & $\gamma _{6}(G)$ \\
\hline
11 & 4 & [192,30] & 7.58 & 3.58 & 0 & [96,13] & [24,15] & [12,5] & [6,2] &
[3,1] &  \\
12 & 4 & [192,32] & 7.58 & 3.58 & 0 & [96,13] & [24,9] & [12,5] & [6,2] &
[3,1] &  \\
13 & 35 & [192,38] & 5.58 & 2.58 & 0 & [24,8] & [12,2] & [6,2] & [3,1] &  &
\\
14 & 2 & [192,46] & 7.58 & 3.58 & 0 & [96,15] & [24,2] & [12,2] & [6,2] &
[3,1] &  \\
15 & 3 & [192,47] & 7.58 & 3.58 & 0 & [96,16] & [24,2] & [12,2] & [6,2] &
[3,1] &  \\
16 & 14 & [192,48] & 6.58 & 3.58 & 0 & [48,15] & [24,2] & [12,2] & [6,2] &
[3,1] &  \\
17 & 10 & [192,62] & 6.58 & 3.58 & 0 & [48,7] & [24,2] & [12,2] & [6,2] &
[3,1] &  \\
18 & 1 & [192,72] & 7.58 & 3.58 & 0 & [96,24] & [24,2] & [12,2] & [6,2] &
[3,1] &  \\
19 & 3 & [192,75] & 7.58 & 3.58 & 0 & [96,28] & [24,2] & [12,2] & [6,2] &
[3,1] &  \\
20 & 4 & [192,78] & 7.58 & 4.58 & 0 & [96,33] & [48,2] & [24,2] & [12,2] &
[6,2] & [3,1] \\
21 & 11 & [192,84] & 6.58 & 2.58 & 0 & [48,19] & [12,5] & [6,2] & [3,1] &  &
\\
22 & 2 & [192,95] & 7.58 & 3.58 & 0 & [96,41] & [24,15] & [12,5] & [6,2] &
[3,1] &  \\
23 & 7 & [192,96] & 7.58 & 2.58 & 0 & [48,19] & [24,9] & [12,5] & [3,1] &  &
\\
24 & 4 & [192,100] & 7.58 & 3.58 & 0 & [96,41] & [24,9] & [12,5] & [6,2] &
[3,1] &  \\
25 & 3 & [192,122] & 7.58 & 3.58 & 0 & [96,39] & [24,2] & [12,2] & [6,2] &
[3,1] &  \\
26 & 31 & [192,128] & 3.00 & 0.00 & 2 & [4,2] & [2,1] &  &  &  &  \\
27 & 11 & [192,129] & 5.00 & 1.00 & 3 & [16,3] & [4,2] & [2,1] &  &  &  \\
28 & 25 & [192,131] & 4.00 & 1.00 & 3 & [8,3] & [4,1] & [2,1] &  &  &  \\
29 & 7 & [192,133] & 6.00 & 1.00 & 3 & [16,3] & [8,2] & [4,2] &  &  &  \\
30 & 3 & [192,143] & 6.00 & 0.00 & 2 & [16,2] & [4,1] &  &  &  &  \\
31 & 2 & [192,157] & 6.00 & 2.00 & 4 & [32,6] & [8,5] & [4,2] & [2,1] &  &
\\
32 & 4 & [192,159] & 6.00 & 2.00 & 4 & [32,6] & [8,2] & [4,2] & [2,1] &  &
\\
33 & 10 & [192,163] & 5.00 & 2.00 & 4 & [16,7] & [8,1] & [4,1] & [2,1] &  &
\\
34 & 3 & [192,166] & 6.00 & 2.00 & 4 & [32,9] & [8,1] & [4,1] & [2,1] &  &
\\
35 & 1 & [192,171] & 6.00 & 2.00 & 4 & [32,13] & [8,1] & [4,1] & [2,1] &  &
\\
36 & 3 & [192,177] & 6.00 & 3.00 & 5 & [32,18] & [16,1] & [8,1] & [4,1] &
[2,1] &  \\
37 & 2 & [192,180] & 7.58 & 5.58 & 0 & [96,64] & [96,3] &  &  &  &  \\
38 & 2 & [192,182] & 6.58 & 5.58 & 0 & [96,64] & [48,3] &  &  &  &  \\
39 & 11 & [192,183] & 5.58 & 3.58 & 0 & [24,12] & [24,3] &  &  &  &  \\
40 & 1 & [192,184] & 7.58 & 5.58 & 0 & [192,184] & [48,50] &  &  &  &  \\ \hline
&  &  &  &  &  &  &  &  & \multicolumn{3}{r}{\textit{table continued}}%
\end{tabular}%

\begin{tabular}{cccccccccccc}
\hline\hline
Fam. & Num. & Rep. & Rank & M. L. & Class & $G/Z$ & $\gamma _{2}(G)$ & $%
\gamma _{3}(G)$ & $\gamma _{4}(G)$ & $\gamma _{5}(G)$ & $\gamma _{6}(G)$ \\
\hline
41 & 1 & [192,185] & 7.58 & 5.58 & 0 & [192,185] & [48,3] &  &  &  &  \\
42 & 7 & [192,186] & 4.58 & 3.58 & 0 & [24,12] & [12,3] &  &  &  &  \\
43 & 2 & [192,188] & 5.58 & 4.00 & 0 & [48,3] & [16,2] &  &  &  &  \\
44 & 2 & [192,189] & 6.58 & 4.00 & 0 & [48,3] & [32,2] &  &  &  &  \\
45 & 2 & [192,191] & 6.58 & 4.00 & 0 & [96,70] & [16,14] &  &  &  &  \\
46 & 2 & [192,192] & 6.58 & 4.00 & 0 & [96,71] & [16,2] &  &  &  &  \\
47 & 2 & [192,193] & 6.58 & 4.00 & 0 & [96,72] & [16,2] &  &  &  &  \\
48 & 2 & [192,194] & 7.58 & 4.00 & 0 & [96,72] & [32,2] &  &  &  &  \\
49 & 2 & [192,195] & 7.58 & 4.00 & 0 & [96,70] & [32,47] &  &  &  &  \\
50 & 2 & [192,197] & 7.58 & 4.00 & 0 & [96,71] & [32,2] &  &  &  &  \\
51 & 7 & [192,200] & 4.58 & 2.00 & 0 & [12,3] & [8,4] &  &  &  &  \\
52 & 2 & [192,201] & 7.58 & 4.00 & 0 & [96,70] & [32,49] &  &  &  &  \\
53 & 5 & [192,203] & 3.58 & 2.00 & 0 & [12,3] & [4,2] &  &  &  &  \\
54 & 106 & [192,205] & 6.58 & 1.58 & 0 & [24,14] & [12,5] & [3,1] &  &  &
\\
55 & 49 & [192,207] & 5.58 & 1.58 & 0 & [24,14] & [6,2] & [3,1] &  &  &  \\
56 & 36 & [192,215] & 7.58 & 1.58 & 0 & [24,14] & [24,15] & [3,1] &  &  &
\\
57 & 35 & [192,238] & 6.58 & 1.58 & 0 & [24,14] & [12,5] & [3,1] &  &  &  \\
58 & 13 & [192,239] & 7.58 & 2.58 & 0 & [48,36] & [24,9] & [6,2] & [3,1] &
&  \\
59 & 9 & [192,261] & 7.58 & 2.58 & 0 & [48,36] & [24,9] & [6,2] & [3,1] &  &
\\
60 & 13 & [192,269] & 6.58 & 2.58 & 0 & [48,36] & [12,1] & [6,2] & [3,1] &
&  \\
61 & 24 & [192,280] & 7.58 & 2.58 & 0 & [48,36] & [24,9] & [6,2] & [3,1] &
&  \\
62 & 6 & [192,299] & 7.58 & 2.58 & 0 & [96,87] & [12,5] & [6,2] & [3,1] &  &
\\
63 & 2 & [192,300] & 7.58 & 3.58 & 0 & [96,89] & [24,15] & [6,2] & [3,1] &
&  \\
64 & 4 & [192,305] & 7.58 & 3.58 & 0 & [96,89] & [24,9] & [6,2] & [3,1] &  &
\\
65 & 4 & [192,307] & 7.58 & 3.58 & 0 & [96,91] & [24,9] & [6,2] & [3,1] &  &
\\
66 & 30 & [192,315] & 6.58 & 2.58 & 0 & [48,38] & [12,2] & [6,2] & [3,1] &
&  \\
67 & 26 & [192,316] & 6.58 & 2.58 & 0 & [48,38] & [12,2] & [6,2] & [3,1] &
&  \\
68 & 64 & [192,318] & 7.58 & 2.58 & 0 & [48,38] & [24,9] & [6,2] & [3,1] &
&  \\
69 & 24 & [192,319] & 7.58 & 2.58 & 0 & [48,38] & [24,9] & [6,2] & [3,1] &
&  \\
70 & 20 & [192,323] & 7.58 & 2.58 & 0 & [48,38] & [24,9] & [6,2] & [3,1] &
&  \\
\hline
&  &  &  &  &  &  &  &  & \multicolumn{3}{r}{\textit{table continued}}%
\end{tabular}%

\begin{tabular}{cccccccccccc}
\hline\hline
Fam. & Num. & Rep. & Rank & M. L. & Class & $G/Z$ & $\gamma _{2}(G)$ & $%
\gamma _{3}(G)$ & $\gamma _{4}(G)$ & $\gamma _{5}(G)$ & $\gamma _{6}(G)$ \\
\hline
71 & 4 & [192,381] & 7.58 & 3.58 & 0 & [96,89] & [24,9] & [6,2] & [3,1] &  &
\\
72 & 3 & [192,455] & 7.58 & 3.58 & 0 & [96,102] & [24,9] & [6,2] & [3,1] &
&  \\
73 & 2 & [192,467] & 7.58 & 3.58 & 0 & [96,110] & [24,2] & [12,2] & [6,2] &
[3,1] &  \\
74 & 6 & [192,469] & 7.58 & 3.58 & 0 & [96,117] & [24,2] & [12,2] & [6,2] &
[3,1] &  \\
75 & 4 & [192,470] & 7.58 & 3.58 & 0 & [96,117] & [24,2] & [12,2] & [6,2] &
[3,1] &  \\
76 & 21 & [192,523] & 6.58 & 2.58 & 0 & [48,43] & [12,2] & [6,2] & [3,1] &
&  \\
77 & 24 & [192,526] & 7.58 & 2.58 & 0 & [48,43] & [24,9] & [6,2] & [3,1] &
&  \\
78 & 2 & [192,591] & 7.58 & 3.58 & 0 & [96,144] & [24,15] & [6,2] & [3,1] &
&  \\
79 & 24 & [192,592] & 7.58 & 2.58 & 0 & [48,43] & [24,9] & [6,2] & [3,1] &
&  \\
80 & 18 & [192,597] & 7.58 & 2.58 & 0 & [48,43] & [24,9] & [6,2] & [3,1] &
&  \\
81 & 16 & [192,598] & 7.58 & 2.58 & 0 & [48,43] & [24,9] & [6,2] & [3,1] &
&  \\
82 & 4 & [192,620] & 7.58 & 3.58 & 0 & [96,144] & [24,9] & [6,2] & [3,1] &
&  \\
83 & 3 & [192,700] & 7.58 & 3.58 & 0 & [96,137] & [24,9] & [6,2] & [3,1] &
&  \\
84 & 4 & [192,706] & 7.58 & 3.58 & 0 & [96,138] & [24,2] & [12,2] & [6,2] &
[3,1] &  \\
85 & 3 & [192,719] & 7.58 & 3.58 & 0 & [96,145] & [24,9] & [6,2] & [3,1] &
&  \\
86 & 4 & [192,757] & 7.58 & 3.58 & 0 & [96,144] & [24,9] & [6,2] & [3,1] &
&  \\
87 & 4 & [192,758] & 7.58 & 3.58 & 0 & [96,146] & [24,9] & [6,2] & [3,1] &
&  \\
88 & 4 & [192,800] & 7.58 & 3.58 & 0 & [96,160] & [24,9] & [6,2] & [3,1] &
&  \\
89 & 2 & [192,802] & 7.58 & 3.58 & 0 & [96,160] & [24,15] & [6,2] & [3,1] &
&  \\
90 & 35 & [192,812] & 5.00 & 0.00 & 2 & [8,5] & [4,2] &  &  &  &  \\
91 & 10 & [192,825] & 6.00 & 0.00 & 2 & [8,5] & [8,5] &  &  &  &  \\
92 & 13 & [192,850] & 5.00 & 1.00 & 3 & [16,11] & [4,1] & [2,1] &  &  &  \\
93 & 24 & [192,880] & 6.00 & 1.00 & 3 & [16,11] & [8,2] & [2,1] &  &  &  \\
94 & 4 & [192,886] & 6.00 & 2.00 & 3 & [32,27] & [8,2] & [2,1] &  &  &  \\
95 & 2 & [192,890] & 6.00 & 2.00 & 3 & [32,27] & [8,5] & [2,1] &  &  &  \\
96 & 13 & [192,898] & 6.00 & 1.00 & 3 & [16,11] & [8,2] & [2,1] &  &  &  \\
97 & 9 & [192,901] & 6.00 & 1.00 & 3 & [16,11] & [8,2] & [2,1] &  &  &  \\
98 & 3 & [192,904] & 6.00 & 2.00 & 3 & [32,28] & [8,2] & [2,1] &  &  &  \\
99 & 2 & [192,942] & 6.00 & 2.00 & 4 & [32,39] & [8,1] & [4,1] & [2,1] &  &
\\
100 & 10 & [192,945] & 7.58 & 3.58 & 0 & [48,48] & [48,32] & [24,3] &  &  &
\\ \hline
&  &  &  &  &  &  &  &  & \multicolumn{3}{r}{\textit{table continued}}%
\end{tabular}%

\begin{tabular}{cccccccccccc}
\hline\hline
Fam. & Num. & Rep. & Rank & M. L. & Class & $G/Z$ & $\gamma _{2}(G)$ & $%
\gamma _{3}(G)$ & $\gamma _{4}(G)$ & $\gamma _{5}(G)$ & $\gamma _{6}(G)$ \\
\hline
101 & 10 & [192,947] & 6.58 & 3.58 & 0 & [48,48] & [24,3] &  &  &  &  \\
102 & 1 & [192,955] & 7.58 & 5.58 & 0 & [192,955] & [48,50] &  &  &  &  \\
103 & 1 & [192,956] & 7.58 & 5.58 & 0 & [192,956] & [48,3] &  &  &  &  \\
104 & 3 & [192,957] & 7.58 & 4.58 & 0 & [96,187] & [48,31] & [24,13] & [12,3]
&  &  \\
105 & 10 & [192,959] & 6.58 & 3.58 & 0 & [48,48] & [24,13] & [12,3] &  &  &
\\
106 & 3 & [192,962] & 7.58 & 4.58 & 0 & [96,187] & [48,33] & [24,3] &  &  &
\\
107 & 4 & [192,973] & 7.58 & 4.58 & 0 & [96,195] & [48,31] & [24,13] & [12,3]
&  &  \\
108 & 4 & [192,987] & 7.58 & 4.58 & 0 & [96,195] & [48,33] & [24,3] &  &  &
\\
109 & 6 & [192,994] & 6.58 & 2.00 & 0 & [48,49] & [8,5] & [4,2] &  &  &  \\
110 & 6 & [192,998] & 6.58 & 2.00 & 0 & [48,49] & [8,4] &  &  &  &  \\
111 & 5 & [192,1003] & 7.58 & 2.00 & 0 & [48,49] & [16,12] & [8,4] &  &  &
\\
112 & 1 & [192,1008] & 7.58 & 4.00 & 0 & [192,1008] & [16,2] &  &  &  &  \\
113 & 1 & [192,1009] & 7.58 & 4.00 & 0 & [192,1009] & [16,14] &  &  &  &  \\
114 & 3 & [192,1014] & 7.58 & 3.00 & 0 & [96,197] & [16,10] & [8,5] & [4,2]
&  &  \\
115 & 3 & [192,1017] & 7.58 & 3.00 & 0 & [96,197] & [16,13] & [8,4] &  &  &
\\
116 & 1 & [192,1020] & 7.58 & 6.00 & 0 & [192,1020] & [64,192] &  &  &  &
\\
117 & 1 & [192,1021] & 7.58 & 4.00 & 0 & [48,50] & [64,224] &  &  &  &  \\
118 & 1 & [192,1022] & 7.58 & 4.00 & 0 & [48,50] & [64,239] &  &  &  &  \\
119 & 1 & [192,1023] & 7.58 & 6.00 & 0 & [1092,1023] & [64,242] &  &  &  &
\\
120 & 1 & [192,1024] & 7.58 & 4.00 & 0 & [48,50] & [64,242] &  &  &  &  \\
121 & 1 & [192,1025] & 7.58 & 6.00 & 0 & [192,1025] & [64,245] &  &  &  &
\\
122 & 24 & [192,1042] & 7.58 & 1.58 & 0 & [48,51] & [12,5] & [3,1] &  &  &
\\
123 & 20 & [192,1045] & 6.58 & 1.58 & 0 & [48,51] & [6,2] & [3,1] &  &  &
\\
124 & 50 & [192,1049] & 7.58 & 1.58 & 0 & [48,51] & [12,5] & [3,1] &  &  &
\\
125 & 26 & [192,1145] & 7.58 & 1.58 & 0 & [48,51] & [12,5] & [3,1] &  &  &
\\
126 & 55 & [192,1146] & 7.58 & 1.58 & 0 & [48,51] & [12,5] & [3,1] &  &  &
\\
127 & 50 & [192,1148] & 7.58 & 1.58 & 0 & [48,51] & [12,5] & [3,1] &  &  &
\\
128 & 19 & [192,1153] & 7.58 & 1.58 & 0 & [48,51] & [12,5] & [3,1] &  &  &
\\
129 & 3 & [192,1310] & 7.58 & 2.58 & 0 & [96,207] & [12,2] & [6,2] & [3,1] &
&  \\
130 & 6 & [192,1316] & 7.58 & 2.58 & 0 & [96,209] & [12,2] & [6,2] & [3,1] &
&  \\ \hline
&  &  &  &  &  &  &  &  & \multicolumn{3}{r}{\textit{table continued}}%
\end{tabular}%

\begin{tabular}{cccccccccccc}
\hline\hline
Fam. & Num. & Rep. & Rank & M. L. & Class & $G/Z$ & $\gamma _{2}(G)$ & $%
\gamma _{3}(G)$ & $\gamma _{4}(G)$ & $\gamma _{5}(G)$ & $\gamma _{6}(G)$ \\
\hline
131 & 4 & [192,1331] & 7.58 & 2.58 & 0 & [96,209] & [12,2] & [6,2] & [3,1] &
&  \\
132 & 4 & [192,1333] & 7.58 & 2.58 & 0 & [96,209] & [12,2] & [6,2] & [3,1] &
&  \\
133 & 4 & [192,1394] & 7.58 & 2.58 & 0 & [96,219] & [12,2] & [6,2] & [3,1] &
&  \\
134 & 7 & [192,1407] & 5.00 & 0.00 & 2 & [16,14] & [2,1] &  &  &  &  \\
135 & 11 & [192,1423] & 6.00 & 0.00 & 2 & [16,14] & [4,2] &  &  &  &  \\
136 & 15 & [192,1434] & 6.00 & 0.00 & 2 & [16,14] & [4,2] &  &  &  &  \\
137 & 5 & [192,1449] & 6.00 & 0.00 & 2 & [16,14] & [4,2] &  &  &  &  \\
138 & 3 & [192,1465] & 6.00 & 1.00 & 3 & [32,46] & [4,1] & [2,1] &  &  &  \\
139 & 4 & [192,1472] & 7.58 & 3.58 & 0 & [96,226] & [24,13] & [12,3] &  &  &
\\
140 & 4 & [192,1483] & 7.58 & 3.58 & 0 & [96,226] & [24,3] &  &  &  &  \\
141 & 2 & [192,1489] & 7.58 & 5.58 & 0 & [96,227] & [96,203] &  &  &  &  \\
142 & 2 & [192,1491] & 7.58 & 5.58 & 0 & [96,227] & [96,204] &  &  &  &  \\
143 & 2 & [192,1492] & 7.58 & 5.58 & 0 & [96,227] & [96,204] &  &  &  &  \\
144 & 2 & [192,1495] & 6.58 & 5.58 & 0 & [96,227] & [48,50] &  &  &  &  \\
145 & 2 & [192,1505] & 5.58 & 4.00 & 0 & [48,50] & [16,14] &  &  &  &  \\
146 & 2 & [192,1506] & 6.58 & 4.00 & 0 & [48,50] & [32,47] &  &  &  &  \\
147 & 2 & [192,1508] & 6.58 & 4.00 & 0 & [48,50] & [32,49] &  &  &  &  \\
148 & 2 & [192,1524] & 7.58 & 1.58 & 0 & [96,230] & [6,2] & [3,1] &  &  &
\\
149 & 2 & [192,1525] & 7.58 & 1.58 & 0 & [96,230] & [6,2] & [3,1] &  &  &
\\
150 & 1 & [192,1541] & 7.58 & 6.00 & 0 & [192,1541] & [64,267] &  &  &  &
\\ \hline
&  &  &  &  & & & & & & &%
\end{tabular}%

The following informations is listed in Table V for each isoclinism family;

\begin{enumerate}
\item the number of groups in the family;

\item the rank; the rank of $G$ is $\log _{2}\left\vert Z(G)\cap G^{\prime
}\right\vert +\log _{2}\left\vert G/Z(G)\right\vert .$

\item the middle length; the middle length of $G$ \ is $\log _{2}\left\vert
G^{\prime }/Z(G)\cap G^{\prime }\right\vert .$

\item the nilpotency class, $c>0,$ of the groups in the family.

\item the group id of $G/Z(G)$, of a group $G$ in the family.

\item the group id of the non-trivial or non-repeatedly terms, $\gamma
_{2}(G),...,\gamma _{c}(G),$ of the lower central series of $G;$ here $%
\gamma _{1}(G)=G$ and $\gamma _{i+1}(G)=[\gamma _{i}(G),G]$ for $1\leq i\leq
c.$
\end{enumerate}

\newpage

\end{document}